\documentclass[a4paper,12pt]{article}
\usepackage{hyperref}

\usepackage{mathrsfs}
\usepackage{ifthen}
\usepackage{verbatim}
\linespread{1}
\usepackage[english]{babel}
\usepackage{fancyhdr}
\usepackage{enumitem}
\usepackage{amsmath,amssymb,amsthm,graphicx}
\usepackage{latexsym}
\usepackage{ifpdf}
\usepackage{appendix}
\usepackage{color}
\usepackage{bbm}

\usepackage{tikz}
\usetikzlibrary{matrix,arrows,decorations.pathmorphing}

\newtheorem{Theorem}{Theorem}[section]
\newtheorem{Lemma}[Theorem]{Lemma}
\newtheorem{Corollary}[Theorem]{Corollary}

\newtheorem{Proposition}[Theorem]{Proposition}

\newtheorem{Definition}[Theorem]{Definition}

\newtheorem*{Notation}{Notation}

\theoremstyle{definition}
\newtheorem{Remark}[Theorem]{Remark}
\newtheorem{Example}[Theorem]{Example}


%


\def\l{\left}
\def\r{\right}
\newcommand{\seq}[1]{\l\{#1\r\}}
\newcommand{\bra}[1]{\l(#1\r)}

\newcommand{\abs}[1]{\l|#1\r|}

\def\norm#1{\left \Vert #1 \right \Vert}

\renewcommand{\hat}{\widehat}
\renewcommand{\bar}{\overline}
\def\gap{\; \; \;}

\newcommand{\ZZ}{\mathbb{Z}}
\newcommand{\PP}{\mathbb{P}}
\newcommand{\EE}[1]{\mathbb{E}\ex{#1}}

\newcommand{\Ff}{\mathcal{F}}
\newcommand{\RR}{\mathbb{R}}

\newcommand{\NN}{\mathbb{N}}
\newcommand{\Gg}{\mathcal{G}}


\def\Chi{\Large{\chi}}

\newcommand{\Ll}{\mathcal{L}}
\newcommand{\Cc}{\mathcal{C}}

\newcommand{\pnorm}[2]{\norm{#1}_{#2\textup{-var;}[0,1]}}

\newcommand{\floor}[1]{\lfloor{#1}\rfloor}

\newcommand{\Mm}{\mathcal{M}}

\def\WW{\mathbb{W}}

\def\EE{\mathbb{E}}
\def\Ww{\mathcal{W}}
\def\Ll{\mathcal{L}}

\def\Mm{\mathcal{M}}

\def\XX{\mathbb{X}}
\def\YY{\mathbb{Y}}

\def\Lip{\textup{Lip}}

\newcommand{\inhomd}[3]{\rho_{#1\textup{-var;}[#2,#3]}}
\newcommand{\Var}[1]{\textup{Var}({#1})}
\usepackage[us]{datetime}
\usepackage{geometry}
\geometry{
a4paper,
left=20mm,
right=20mm,
top=20mm,
bottom=20mm,
}



\numberwithin{equation}{section}
\numberwithin{figure}{section}

\begin{document}
\title{Pathwise approximation of SDEs by coupling piecewise abelian rough paths}
\author{Guy Flint and Terry Lyons\\ \\ \small{\textit{ Mathematical Institute, University of Oxford, Woodstock Road, OX2 6GG, England}}}
\date{\today}
\maketitle

\begin{changemargin}{0.3cm}{0.3cm} 
\abstract{We present a new pathwise approximation scheme for stochastic differential equations driven by multidimensional Brownian motion which does not require the simulation of L\'{e}vy area and has a Wasserstein convergence rate better than the Euler scheme's strong error rate of $O(\sqrt{h})$, where $h$ is the step-size. 
By using rough path theory we avoid imposing any non-degenerate H\"{o}rmander or ellipticity assumptions on the vector fields of the SDE, in contrast to the similar papers of Alfonsi et al \cite{alfonsi2014optimal,alfonsi2014pathwise}, Davie \cite{davie2014kmt,davie2014pathwise}, and Malliavin et al \cite{cruzeiro2004geometrization}.
The scheme is based on the log-ODE method with the L\'{e}vy area increments replaced by Gaussian approximations with the same covariance structure. The Wasserstein coupling is achieved by making small changes to the argument of Davie in \cite{davie2014kmt}, the latter being an extension of the Koml\'{o}s-Major-Tusn\'{a}dy Theorem. We prove that the convergence of the scheme in the Wasserstein metric is of the order $O(h^{1-2/\gamma-\varepsilon})$ when the vector fields are $\gamma$-Lipschitz in the sense of Stein.}
\end{changemargin}

\smallskip
\smallskip
\noindent \textbf{Keywords.} {Pathwise approximation of SDEs, Wasserstein couplings, Koml\'{o}s-Major-Tusn\'{a}dy Theorem, log-ODE method, rough path theory, It\^{o} map.}

\smallskip
\noindent \texttt{Guy.Flint@maths.ox.ac.uk}


\section{Introduction}

The problem of constructing pathwise approximations of solutions to stochastic differential equations (SDEs) driven by $d$-dimensional Brownian motion is difficult if a strong approximation error of order greater than $\frac{1}{2}$ is desired. This is because one must have the ability to simulate iterated integrals of Brownian motion \cite{clark1980maximum,dickinson2007optimal}, which is hard when $d\geq 2$. Efficient algorithms for generating double integrals, that is L\'{e}vy area increments, do exist for $d=2$ (see \cite{gaines1994random,ryden2001simulation,wiktorsson2001joint}), but the general case of $d>2$ is still an open problem. With this obstacle in mind, the papers \cite{alfonsi2014optimal,alfonsi2014pathwise,alves2008monte,cruzeiro2006numerical,cruzeiro2004geometrization,davie2014kmt,davie2014pathwise,deya2012milstein,gelbrich1995simultaneous,gelbrich1996discretization,math1989rate} and \cite[II.9]{rachev1998mass}, among  others, have studied SDE approximation schemes which do not require L\'{e}vy area increments, but achieve an order of convergence greater than $\frac{1}{2}$. Instead of measuring the success of the scheme in the standard $L^2$-norm, these papers use the Wasserstein metric from optimal transport theory \cite{cedric2003topics}. In particular, one constructs a probabilistic coupling between the SDE solution and an approximation scheme such that the error is measured in the Wasserstein metric, (using some particular cost function on Wiener space). 

To set up notation, let $W=(W_1,\ldots,W_d)$ denote a standard $d$-dimensional Brownian motion. 
In this paper we consider the pathwise approximation of the Stratonovich SDE
\begin{equation}\label{e-intro-sde}
dx_t = V\bra{x_t} \circ dW(t)  + V_0\bra{x_t} dt:= \sum^d_{k=1} V_k\bra{x_t} \circ dW_k(t) + V_0\bra{x_t} dt , \gap t\in [0,1],
\end{equation}
where $x_0\in\RR^q$, $V_0$ is $1$-Lipschitz and the vector field collection $V=\seq{V_k}_{k=1}^d$ are $\gamma$-Lipschitz in the sense of Stein \cite[Chapter XI]{stein1970singular}, (denoted by $V_0 \in \textup{Lip}^1(\RR^q)$ and $V \in \textup{Lip}^\gamma(\RR^q)$). We assume that $\gamma>2$ so that there exists a unique solution to (\ref{e-intro-sde}) almost surely (see \cite[Theorem 17.3]{FV}).

\subsection{Pathwise approximation scheme}

We now introduce our new approximation scheme for (\ref{e-intro-sde}).  
Divide the unit interval $[0,1]$ into $N$ pieces of length $h=N^{-1}$. Let us adopt classical ODE flow notation by setting  $e^F(y_0)=\exp(F)(y_0)$ to be the value of the solution to the following ODE at time $t=1$:
\[
y_t = y_0 + \int^t_0 F\bra{y_s} ds, \gap t \in [0,1],
\]
for some suitably regular vector field $F:\RR^q\to\RR^q$. 
Define the independent normal random variables
\begin{equation}\label{e-normal}
W^{(j)} \sim N\bra{0,hI_d}, \,\,\,\, z^{(j)} \sim N\bra{0,12^{-1}hI_d}, \,\,\,\, \lambda^{(j)} = (\lambda^{(j)}_{kl})_{1\leq k < l \leq d} \sim N\Big(0,12^{-1}h^2I_{\frac{d(d-1)}{2}}\Big),  
\end{equation}
where $j=0,1,\ldots,N-1$, and for $1\leq k<l\leq d$ set 
\begin{equation}\label{e-b-def}
B^{(j)}_{kl} := z^{(j)}_k W_l^{(j)} - z_l^{(j)}W_k^{(j)} + \lambda_{kl}^{(j)}.
\end{equation}
Our scheme $\{x^h_j\}_{j=0}^N$ is defined iteratively; $x^h_0 = x_0$ then for $j=0,1,\ldots,N-1$:
\begin{equation}\label{e-scheme}
x^h_{j+1} := \exp\bra{hV_0 + \sum_{k=1}^d W^{(j)}_k V_k + \sum_{1\leq k < l \leq d} B^{(j)}_{kl} [V_k,V_l]}\bra{x^h_j}.
\end{equation}
We can think of the sequence $\{x^h_j\}_{j=1}^N \subset \bra{\RR^q}^{\times N}$ as taking values in the Euclidean space $\RR^{qN}$, which we equip with the metric $\rho(x,y)=\max_{j=1,\ldots,N} \norm{x_j-y_j}_{\RR^q}$. Given Borel measures $\mu_1,\mu_2$ on $\RR^{qN}$, let $\Mm(\mu_1,\mu_2)$ denote the set of measurable maps $\Psi: \RR^{qN} \to\RR^{qN}$ such that the pushforward measure satisfies $\Psi_*(\mu_1)=\mu_2$. The Wasserstein metric is defined as   
\[
\Ww_2(\mu_1,\mu_2) := \bra{\inf_{\Psi\in \Mm(\mu_1,\mu_2)} \int_{\RR^{qN}} \rho\bra{x,\Psi(x)}^2 \mu_1(dx)}^{1/2}.
\]
The set $\Mm(\mu_1,\mu_2)$ is called the set of couplings of $\mu_1$ and $\mu_2$. 
An equivalent definition is given by $\Ww_2(\mu_1,\mu_2) = \inf \EE\bra{\rho(X,Y)^2}^{1/2}$, 
where the infimum is taken over all joint distributions of the random variables $X$ and $Y$ on $\RR^{qN}$ with marginals $\mu_1$ and $\mu_2$ respectively. 
The metric originates from the {Monge-Kantorovich mass transportation problem}, first introduced by Monge in 1781 \cite{monge1781memoire}, and then rediscovered many times in many forms since by L.V.~Kantorovich \cite{kantorovich_russian}, P.~L\'{e}vy, L.N.~Wasserstein \cite{vaserstein}, among others. For more details we refer to  \cite{malrieu2003convergence,cedric2003topics} and \S12 of \cite{davie2014pathwise}. 

We now state the main result of the paper.

\begin{Theorem}\label{t-big}
Fix $h>0$ and $x_0\in\RR^q$. 
Let $\mu$ denote the law of $\{x_{jh}\}_{j=1}^N$ on $\RR^{qN}$, where $x$ is the solution to (\ref{e-intro-sde}) started at $x_0$, and let $\nu$ denote the measure given by the law of the approximation $x^h=\{x^h_j\}_{j=1}^N$ on $\RR^{qN}$. If $V_0\in \textup{Lip}^1(\RR^q)$ and $V=\{V_1,\ldots,V_d\}\in \textup{Lip}^\gamma(\RR^q)$ for $\gamma>2$, then there exists a constant $C=C(\norm{V}_{\textup{Lip}^\gamma},\gamma)$ such that 
\[
\Ww_2(\mu,\nu) \leq Ch^{1-2/\gamma-\varepsilon}
\]
for all $\varepsilon>0$. 
That is, we can find independent normal random variables as in (\ref{e-normal}), defined on the same probability space as the Brownian motion $W$ driving (\ref{e-intro-sde}), such that 
\[
\EE\bra{\max_{j=1,\ldots,N} \norm{x_{jh} - x^h_{j}}_{\RR^q}^2}^{1/2} \leq Ch^{1-2/\gamma-\varepsilon}.
\]
\end{Theorem}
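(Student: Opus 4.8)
The plan is to factor the problem through rough path theory, which separates the deterministic sensitivity of the solution map from the probabilistic task of coupling $W$ with the Gaussian variables of (\ref{e-normal}). First, $x_{jh}$ is the value at $jh$ of the rough differential equation $dy=V(y)\,d\mathbf{W}+V_0(y)\,dt$ driven by the Stratonovich enhanced Brownian motion $\mathbf{W}=(W,\mathbb{W})$, a geometric $p$-rough path for every $p\in(2,3)$. Second, on the partition $\mathcal{D}:=\seq{jh}_{j=0}^{N}$, the scheme $\seq{x^h_j}$ is exactly the step-$2$ log-ODE (higher-order Euler) scheme applied to the piecewise abelian geometric rough path $\mathbf{X}^h$ with step log-signature $h\,e_0+\sum_k W^{(j)}_k e_k+\sum_{k<l}B^{(j)}_{kl}[e_k,e_l]$ on $[jh,(j+1)h]$ (the level-two drift brackets $[V_0,V_k]$, of conditionally centred size $h^{3/2}$, are dropped at a cost of $O(h)$ in $L^2$). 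Finally, and this is the point of (\ref{e-normal})--(\ref{e-b-def}): with $W^{(j)}:=\Delta W^{(j)}$ the Brownian increment and $z^{(j)}:=h^{-1}\int_{jh}^{(j+1)h}(W_s-W_{jh})\,ds-\tfrac12\Delta W^{(j)}$ the renormalised bridge time-integral, the classical decomposition of Brownian L\'{e}vy area reads
\[
A^{(j)}_{kl}\;=\;z^{(j)}_k\,\Delta W^{(j)}_l-z^{(j)}_l\,\Delta W^{(j)}_k+\alpha^{(j)}_{kl},
\]
where $\alpha^{(j)}$ denotes the L\'{e}vy area of the Brownian bridge on the $j$-th step; thus $B^{(j)}$ is literally $A^{(j)}$ with the non-Gaussian second-chaos term $\alpha^{(j)}$ (variance $12^{-1}h^2$) replaced by an independent Gaussian $\lambda^{(j)}$ of the same variance. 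The maps $W\mapsto W^{(j)}$ and $W\mapsto z^{(j)}$ are exact, jointly independent Gaussian functionals with the prescribed laws, so the only genuine approximation is the substitution $\alpha^{(j)}\to\lambda^{(j)}$.

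\textbf{Decomposition.} Let $\tilde{\mathbf{W}}$ be the piecewise abelian rough path carrying the \emph{true} step data $(\Delta W^{(j)},A^{(j)})$, so that $\mathcal{E}_{\mathcal{D}}(\tilde{\mathbf{W}})$ is the log-ODE scheme run on the genuine Brownian log-signatures and $x^h=\mathcal{E}_{\mathcal{D}}(\mathbf{X}^h)$, and split
\[
\max_j\norm{x_{jh}-x^h_j}\;\le\;\max_j\norm{x_{jh}-\mathcal{E}_{\mathcal{D}}(\tilde{\mathbf{W}})_j}\;+\;\max_j\norm{\mathcal{E}_{\mathcal{D}}(\tilde{\mathbf{W}})_j-x^h_j}.
\]
The first term is the deterministic convergence error of the step-$2$ log-ODE method: by the rough-path Euler estimates (\cite[Ch.~10]{FV}, using $V\in\textup{Lip}^\gamma$ with $\gamma>p$), together with the fact that its leading per-step contribution is conditionally centred, it is of strictly higher order in $L^2$ than $h^{1-2/\gamma}$ --- this is essentially the strong order $\approx 1$ of the log-ODE method for smooth fields --- and so it is never the bottleneck. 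In the second term the level-one data of $\tilde{\mathbf{W}}$ and $\mathbf{X}^h$ agree (because $W^{(j)}=\Delta W^{(j)}$), so the two inputs differ only by the pure-area increments $A^{(j)}-B^{(j)}=\alpha^{(j)}-\lambda^{(j)}$; since a level-two (pure-area) perturbation of a piecewise abelian rough path displaces the log-ODE output only \emph{linearly} in the perturbation --- it acts through the bounded fields $[V_k,V_l]$, not through a square root --- this term is controlled, via a stability estimate for the scheme with a $\textup{Lip}^\gamma$-dependent constant bounded (following Cass--Litterer--Lyons) in terms of the $p$-variation of the driving rough paths, by the $p/2$-variation along $\mathcal{D}$ of the accumulated discrepancies $\seq{\sum_{i\le j}(\alpha^{(i)}-\lambda^{(i)})}_j$.

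\textbf{The coupling.} It remains to build the independent normals $\lambda^{(j)}$ (variance $12^{-1}h^2$) on the probability space of $W$ so that they track the bridge L\'{e}vy areas $\alpha^{(j)}$, which are mean-zero second-chaos variables, independent across $j$, and \emph{uncorrelated with but not independent of} $(\Delta W^{(j)},z^{(j)})$; a genuine coupling is therefore required. The finest-scale part of the $p/2$-variation above is $\bra{\sum_j\abs{\alpha^{(j)}-\lambda^{(j)}}^{p/2}}^{2/p}\sim(Nh^{p/2})^{2/p}=h^{1-2/p}$, which is already of the right order once $p\uparrow\gamma$; but its coarser blocks are governed by the partial sums $\sum_{i\le n}(\alpha^{(i)}-\lambda^{(i)})$, and under a naive coupling these accumulate like a martingale to $O(h^{1/2})$, which would cap the rate at the Euler value $\tfrac12$ precisely when $\gamma>4$. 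To suppress them I would couple $\seq{\alpha^{(j)}}$ with $\seq{\lambda^{(j)}}$ using Davie's extension \cite{davie2014kmt} of the Koml\'{o}s--Major--Tusn\'{a}dy theorem, working scale by scale over dyadic blocks so that $\abs{\sum_{i\le n}(\alpha^{(i)}-\lambda^{(i)})}=O(h\log N)$ uniformly in $n$, whence the coarse blocks are negligible against $h^{1-2/p}$. Because this construction acts purely on the driving Gaussian processes it needs no ellipticity or H\"{o}rmander hypothesis on $V$, in contrast to the earlier works cited in the introduction. Summing the two terms, taking $L^2$-norms, and optimising $p\in(2,\gamma)$ then gives $\mathbb{E}\bra{\max_j\norm{x_{jh}-x^h_j}_{\RR^q}^2}^{1/2}\le Ch^{1-2/\gamma-\varepsilon}$, hence the stated bound on $\Ww_2(\mu,\nu)$; the $\varepsilon$ absorbs the logarithmic KMT loss and the degradation of the rough-path constants as $p$ is pushed down towards $2$ (equivalently, that one cannot take $p=\gamma$ outright).

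\textbf{Main obstacle.} The heart of the matter is the coupling: adapting Davie's scale-by-scale KMT construction so that, from a single Brownian path, one produces independent normals $\lambda^{(j)}$ that are simultaneously independent of the already-fixed $(W^{(j)},z^{(j)})$, as (\ref{e-normal}) demands, and close to the second-chaos variables $\alpha^{(j)}$ with near-optimal (logarithmic) control of \emph{all} partial sums --- this last being exactly what lets the scheme beat the Euler rate. Two further points carry real technical weight: the integrability of the stability constant of the scheme, since $\exp(\norm{\mathbf{W}}_{p\textup{-var}}^p)$ fails to be integrable for $p\ge2$ and one must instead use Cass--Litterer--Lyons-type tail bounds on the number of intervals of bounded $p$-variation; and the verification that a pure-area perturbation of a piecewise abelian rough path perturbs the log-ODE scheme only linearly and in a $p/2$-variation-controlled fashion, which is precisely what converts the $O(h)$ per-step size of $\alpha^{(j)}-\lambda^{(j)}$ into the exponent $1-2/p$ (rather than its half).
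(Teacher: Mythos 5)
Your high-level strategy is correct and closely tracks the paper: both re-interpret $\{x^h_j\}$ as the level-$2$ log-ODE scheme driven by a piecewise abelian rough path $\XX^h$ sharing its level-$1$ data with the true piecewise abelian lift $\WW^h$ of Brownian log-signatures, split the error into the log-ODE strong error plus a rough-path perturbation term, invoke Davie's KMT-type coupling for the area increments, and control the RDE-stability constant via Cass--Litterer--Lyons integrability. The decomposition of $A^{(j)}$ into bridge time-integral $\times$ increment plus bridge L\'evy area is precisely Lemma~\ref{l-area-decomposition}, and your observation that only the bridge area piece genuinely requires coupling (the $z^{(j)}$ is already an exact Gaussian functional of $W$) is a legitimate refinement of the paper's framing, though it would need a re-verification of the non-degeneracy hypotheses in Davie's dyadic construction after conditioning additionally on $\zeta^{(j)}$.

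The proposal has a genuine gap, however, and it is exactly the paper's central technical innovation: the \emph{lifting trick}. You propose to optimise $p\in(2,\gamma)$ in a $p$-variation stability estimate. But $\WW^h$ and $\XX^h$ are $2$-rough paths taking values in $G^{(2)}(\RR^d)$, and the inhomogeneous $p$-variation metric and the RDE Lipschitz estimate (Proposition~\ref{p-lipschitz-rde}) for $G^{(\floor{p})}$-valued rough paths constrain $p\in[2,3)$. On this range your own calculation gives $h^{1-2/p}<h^{1/3}$, which is \emph{worse} than the Euler rate $h^{1/2}$ --- the argument as written cannot prove the theorem. To beat $\sqrt{h}$ one needs $p>4$ (since $1-2/p>1/2$ precisely when $p>4$, matching the requirement $\gamma>4$ in Theorem~\ref{t-big}), and this is impossible at the level of a $2$-rough path. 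The paper resolves this by lifting to $\kappa$-rough paths $S_\kappa(\WW^h),S_\kappa(\XX^h)\in G^{(\kappa)}(\RR^d)$ with $\kappa=\floor{p}$, exploiting the fact (Remark~\ref{r-eq}) that the RDE driven by the lift agrees with that driven by the original. But this in turn forces you to control the differences $\pi_m\bigl(S_\kappa(\WW^h)_{s,t}-S_\kappa(\XX^h)_{s,t}\bigr)$ for \emph{all} $m\leq\kappa$, not just $m=2$: the higher tensor levels differ (they are built algebraically from the lower ones), and estimating them requires the iterated Baker--Campbell--Hausdorff expansion, the symmetry cancellation lemma, and Wiener-chaos hypercontractivity of Section~\ref{s-lifted-estimates} --- none of which appear in your proposal. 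Your claim that a ``level-two (pure-area) perturbation displaces the log-ODE output only linearly'' is a heuristic that does not account for how the perturbation propagates into the higher tensor levels once one lifts. (Relatedly, your closing remark that the $\varepsilon$ absorbs degradation ``as $p$ is pushed down towards $2$'' has the direction of the optimisation reversed: $p$ is pushed \emph{up} towards $\gamma$, which is why $\kappa=\floor{p}$ must be allowed to grow and why the constants degrade.)
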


Thus if the vector fields $V$ of the SDE are sufficiently regular such that $V\in\textup{Lip}^\gamma$ for some $\gamma>4$, then our scheme performs better than the $O(\sqrt{h})$ strong error rate of the traditional Euler scheme (see \cite{maruyama1955continuous} for Maruyama's original proof of the Euler scheme's convergence rate). In the case of polynomial vector fields, we have a Wasserstein rate of $O(h^{1-\varepsilon})$ for any $\varepsilon>0$ by setting $\gamma>\frac{2}{\varepsilon}$. 

\begin{Remark}
We can also consider other $L^p$ (rather than $L^2$) versions of the Wasserstein metric for $p\geq 1$. Of particular note is the metric for $p=1$:
\begin{equation*}
\Ww_1(\mu,\nu) := \inf_{\Psi\in \Mm(\mu,\nu)} \int_{\RR^{qN}} \rho\bra{x,\Psi(x)} \mu(dx).
\end{equation*}
Certainly $\Ww_1(\mu,\nu) \leq \Ww_2(\mu,\nu)$. An elegant feature of this particular Wasserstein metric is its primal representation via functionals using the Kantorovitch-Rubinstein duality formula (\cite[Theorem 5.10 and Remark 5.16]{cedric2003topics}). In particular,
\begin{equation}\label{e-krd}
\Ww_1(\mu,\nu) = \sup_{\substack{\psi \in C\bra{\RR^{qN},\RR} \\ \textup{Lip}(\psi) \leq 1}} \abs{\EE\bra{\psi\bra{\seq{x_{jh}}_{j=1}^N}} - \EE\bra{ \psi\bra{x^h}}},
\end{equation}
where $\textup{Lip}(\psi):= \sup_{x\neq y} \frac{\abs{\psi(x)-\psi(y)}}{\norm{x-y}}$ denotes the Lipschitz constant of $\psi$ in the classical, not Stein, sense.
\end{Remark}

There exist examples of smooth (in fact, polynomial) vector fields $V=\{V_k\}_{k=1}^d$ such that for some constant $c>0$ the corresponding laws $\mu,\nu$ satisfy:
\begin{equation}\label{e-best-possible-rate}
\Ww_2(\mu,\nu)\geq \Ww_1(\mu,\nu) 
\geq ch\log( h^{-1}).
\end{equation}
One example is the SDE defining L\'{e}vy area (see Proposition \ref{p-davie-counter}). So $O(-h\log h)$ is a general upper bound on the convergence rate of our scheme in the Wasserstein metric. Thus for  polynomial (or more generally smooth) vector fields, our scheme achieves a Wasserstein convergence rate which is arbitrarily close (up to a logarithmic factor) to the best possible rate.

\begin{Remark}
Using the Wasserstein metric via the Kantorovich-Rubinstein duality formula is a quick way to establish weak approximation rates for approximation schemes. The disadvantage is that one is restricted to functionals $\psi$ such that $\textup{Lip}(\psi)\leq 1$, while the literature considers more general functionals, (even tempered distributions \cite{gobet2000weak}). However, as a form of compensation, our scheme works for every SDE; it does not demand any ellipticity conditions on the vector fields, unlike many papers covering weak approximations including \cite{alfonsi2014optimal,alfonsi2014pathwise,gobet2000weak}.
  
In the context of options pricing, we can interpret (\ref{e-krd}) as  a measure of the performance of our scheme for weakly approximating the expectation of certain exotic Asian options (that is, functionals of the  path at the times $t\in\{jh\}_{j=0}^N$).  This is in contrast to the weak approximation of vanilla European options, which are functionals of the terminal value of the path.
As an aside, note that if one actually wanted to approximate $\EE\bra{f(x_1)}$ for some function $f\in C_b^\infty(\RR^q,\RR)$, then the algorithm presented by Ninomiya and Victoir in \cite{ninomiya2008weak} does not require L\'{e}vy area simulations either, but produces a much better weak approximation than our scheme. To be precise, they construct a sequential ODE-based scheme which, for a given step-size $h>0$, outputs a point $\hat{x}^h_1$ satisfying:
\[
\abs{\EE\bra{f\bra{\hat{x}^h_1}} - \EE\bra{f(x_1)}} \leq Ch^2. 
\]
This is a whole order better than the best possible rate in general of (\ref{e-best-possible-rate}) for exotic Asian options.
We also comment (cf. \cite[\S1]{alfonsi2014pathwise}) that in the case of  the standard Euler and Milstein schemes, the best order of convergence of the weak error for vanilla functionals is $O(h)$ in general. Indeed, by the work of Talay and Tubaro (\cite[Theorem 1]{talay1990expansion}), this is the case for when $V$ and $V_0$ are non-zero smooth with bounded derivatives of all order and $f\in C^\infty(\RR^q,\RR)$ has polynomial growth together with its derivatives. We stress that our scheme is a pathwise approximation in that  its output is meant to approximate an actual realization of the solution path, rather than the expectation of a given functional, (as in the case of Ninomiya and Victoir). 
\end{Remark} 

In common with the algorithm of \cite{ninomiya2008weak}, our scheme $\{x^h_j\}_{j=1}^N$ is based on the level-2 version of the log-ODE method from rough path theory (see \cite{boutaib2013dimension,gyurko2008rough}). This latter approximation scheme also consists of solving a sequence of ODEs to produce a set of points $\{x^{(j)}\}_{j=1}^N$. In particular: $x^{(0)}=x_0$ and for $j=0,1,\ldots,N-1$:
\begin{equation}\label{e-log-ode-intro}
x^{(j+1)} = \exp\bra{hV_0 + \sum^d_{k=1} W^{(j)}_kV_k + \sum_{1\leq k < l \leq d} A_{kl}^{(j)} [V_k,V_l]}\bra{x^{(j)}}. 
\end{equation}
This scheme requires the L\'{e}vy area increments $A^{(j)}\in[\RR^d,\RR^d]$. For our new scheme $\{x^h_j\}_{j=1}^N$ we replace these increments with the Gaussian random variables $B^{(j)}$ defined above such that the covariance structure is the same. The theory of rough paths allows us to rewrite the original SDE (\ref{e-intro-sde}) as the solution of the following rough differential equation (RDE) with drift:
\[
dx_t = V\bra{x_t)}d\WW_t + V_0\bra{x_t}dt, 
\]
where $\WW \in C([0,1],G^{(2)}(\RR^d))$ is the standard enhanced Brownian rough path. It turns out that the schemes $\{x^{(j)}\}_{j=1}^N$ and $\{x^h_j\}_{j=1}^N$ can also be written as solutions of two RDEs with drift terms. To be precise;  $y_{jh} = x^{(j)}$ and $z_{jh} = x^{(j)}$ for all $j$, where $y,z\in C([0,1],\RR^q)$ solve:
\begin{align*}
d{y}_t &= V\bra{{y}_t}d\WW^h_t + V_0\bra{y_t}dt, \gap y_0 = x_0,\\
dz_t &= V\bra{z_t}d\XX^h_t+ V_0\bra{z_t}dt,\gap  z_0 = x_0.
\end{align*}
Here $\WW^h, \XX^h$ are members of a special class of 2-rough paths which we call \textit{piecewise abelian}. The notion of piecewise abelian rough paths can be thought of as the natural non-commutative, (that is group-valued), analogue of piecewise linear approximations of paths with values in the abelian group $G^{(1)}=\RR^d$. 

Both $\WW^h$ and $\XX^h$ share the same first level, which is the standard $N$-step piecewise linear approximation $W^h$ of the Brownian motion $W$. So we consider $\WW^h$ and $\XX^h$ as two different rough path lifts of the same underlying path $W^h$.  Their difference at level 2 is given by the continuous interpolations of the two discrete random walks $\Chi^h$ and $\Theta^h$ composed of the increments $A^{(j)}$ and $B^{(j)}$ respectively. 

By constructing a probabilistic coupling of these two random walks, (conditional on the underlying Brownian increments of $W^h$), we establish an automatic coupling of $\WW^h$ and $\XX^h$ in the space $G\Omega_2(\RR^d)$ of geometric 2-rough paths. Using the Lipschitz-continuity of the It\^{o} map $\Xi$ of rough path theory in the inhomogeneous $p$-variation metric, this action induces a coupling of the RDE solutions $y$ and $z$ in $C([0,1],\RR^q)$. The situation can be described with the following diagram (where dashed arrows represent couplings):

\begin{center}
\begin{center}
\begin{tikzpicture}

\node (A) {$\Chi^h$};
\node (B) [node distance=2cm, below of=A]{$\Theta^h$};

\node (X) [node distance=1cm, below of=A]{};
\node (X1) [node distance=1.55cm, right of=X] {$W^h$};

\node (C) [node distance=3cm, right of=A]{$\WW^h$};
\node (D) [node distance=3cm, right of=B]{$\XX^h$};

\node (E) [node distance=3cm, right of=C]{$S_\kappa\bra{\WW^h}$};
\node (F) [node distance=3cm, right of=D]{$S_\kappa\bra{\XX^h}$};

\node (G) [node distance=3cm, right of=E]{$y$};
\node (H) [node distance=3cm, right of=F]{$z$};

\node (I) [node distance=3cm, right of=G]{$\seq{x^{(j)}}_{j=1}^N$};
\node (J) [node distance=3cm, right of=H]{$\seq{x^h_j}_{j=1}^N$};

\node (Z1) [node distance=16mm, below of=B]{$[\RR^d,\RR^d]^{\times N}$};
\node (Z2) [node distance=30mm, right of=Z1]{$G\Omega_2 (\RR^d)$};
\node (Z3) [node distance=30mm, right of=Z2]{$G\Omega_\kappa (\RR^d)$};
\node (Z4) [node distance=30mm, right of=Z3]{$C\bra{[0,1],\RR^q}$};
\node (Z5) [node distance=30mm, right of=Z4]{$\RR^{qN}$};

\node (X) [node distance=17mm, below of=X1]{$(\RR^{d})^{\times N}$};

\draw[->] (X) to node {} (Z1);
\draw[->] (X) to node {} (Z2);

\draw[->] (Z1) to node {} (Z2);
\draw[right hook->] (Z2) to node {} (Z3);
\draw[right hook->] (Z2) --(Z3) node[above,midway] {$S_\kappa(\cdot)$};
\draw[right hook->] (Z4) to node {} (Z5);
\draw[right hook->] (Z3) --(Z4) node[above,midway] {$\Xi$};

\draw[->] (A) to node {} (C);
\draw[->] (B) to node {} (D);
\draw[->] (X1) to node {} (C);
\draw[->] (X1) to node {} (D);
\draw[->] (X1) to node {} (A);
\draw[->] (X1) to node {} (B);

\draw[right hook->] (C) to node {} (E);
\draw[right hook->] (D) to node {} (F);

\draw[right hook->] (E) to node {} (G);
\draw[right hook->] (F) to node {} (H);

\draw[right hook->] (G) to node {} (I);
\draw[right hook->] (H) to node {} (J);

\draw[<->, dashed] (A) to node {} (B);
\draw[<->, dashed] (C) to node {} (D);
\draw[<->, dashed] (E) to node {} (F);
\draw[<->, dashed] (I) to node {} (J);
\draw[<->, dashed] (G) to node {} (H);

\end{tikzpicture}
\end{center}
\end{center}

The initial coupling of the random walks $\Chi^h$ and $\Theta^h$ is constructed by using the dyadic coupling argument of Davie's recent paper \cite{davie2014kmt}. In fact all the coupling machinery is his; we only change the original vector to be coupled with a Gaussian vector and the rest of the proof remains the same. Davie's coupling argument is based on a modern extension of the classical Koml\'{o}s-Major-Tusn\'{a}dy Theorem \cite{komlos1975approximation}, also known as the Hungarian Embedding Theorem. Previous papers using the KMT method for Wasserstein approximations of SDEs include \cite{gelbrich1995simultaneous,rachev1998mass,dereich2011multilevel}, where the latter approximated SDEs driven by L\'{e}vy processes. 

\subsection{Previous research}

One benefit of using the technology of rough paths is that we can exploit the Lipschitz-continuity of the It\^{o} map
\[
\Xi: G\Omega_\kappa(\RR^d) \to C\bra{[0,1],\RR^q}.
\]
In our case this allows us to perform the coupling of the SDE and our approximation scheme at the input-side of $\Xi$ rather than directly in the (classical) Wiener space $C([0,1],\RR^q)$.  Therefore our coupling argument is completely independent of the vector fields $V$ of the original SDE (\ref{e-intro-sde}); the vector fields are only relevant once we push the coupling through the It\^{o} map.
Consequently, in contrast to the similar papers \cite{alfonsi2014optimal,alfonsi2014pathwise,alves2008monte,cruzeiro2004geometrization,davie2014kmt,davie2014pathwise}, our approach has the distinct advantage of not imposing any non-degenerate H\"{o}rmander condition on the vector fields $V=\seq{V_k}_{k=1}^d$. The schemes of Malliavin et al \cite{alves2008monte,cruzeiro2004geometrization} and Davie in \cite{davie2014pathwise} demand that the Lie bracket collection satisfies:
\begin{equation}\label{e-malliavin-condition}
\textup
\{[V_k,V_l](x) : 1\leq k<l\leq d, x\in\RR^q\} \textup{ spans } \RR^q,
\end{equation}
while \cite{davie2014kmt} requires the following less stringent version of the H\"{o}rmander condition. 

\begin{Definition}[Davie condition]
For each $x\in\RR^q$, define the linear mapping $L_x : \RR^d \oplus [\RR^d,\RR^d] \to \RR^q$ by 
\[
L_x(r,s) = \sum^d_{k=1} V_k(x)r_k + \sum_{1\leq k < l \leq d} [V_k,V_l](x)s_{kl} \textup { for } r=(r_k)\in\RR^d \textup{ and } s=(s_{kl})\in [\RR^d,\RR^d]. 
\]
Denote the ball of radius $\varepsilon>0$ centred at the origin in $\RR^d\oplus [\RR^d,\RR^d]$ by $B(0,\varepsilon)$. 
The Davie strengthened H\"{o}rmander condition is defined as the existence of constants $\delta>0$ and $K>0$ such that for all $x\in\RR^q$, $B\bra{0,\delta(1+\abs{x})^{-K}} \subseteq L_x\bra{B(0,1)}$. 
\end{Definition}

In other words, Davie assumes that
\begin{equation}\label{e-davie-non-deg}
\seq{V_j(x), [V_k, V_l](x) : 1\leq j,k,l\leq d, x\in \RR^q} \textup{ spans } \RR^q \textup{ uniformly in } x.
\end{equation}

Note that the former restriction (\ref{e-malliavin-condition}) excludes the SDE defining L\'{e}vy area and every non-trivial SDE in the simple case of $q=d=2$. It also excludes the example of the SDE describing Brownian motion on the unit circle in $\RR^2$; in this case, $(d,q)=(2,1)$ and the rank of the associated Lie bracket is equal to 1 (cf. \cite[Remark 2]{cruzeiro2006numerical}). On the other hand, both of these examples satisfy (\ref{e-davie-non-deg}) and so Davie's scheme in \cite{davie2014kmt} can be applied. 

Using a clever rotation of Brownian motion, Cruzeiro, Malliavin and Thalmaier \cite{cruzeiro2004geometrization} establish a Wasserstein metric convergence rate of $O(h)$ for a scheme based on the traditional Milstein approximation which does not require the simulation of L\'{e}vy area. 
Davie also achieves this rate with a perturbation of the Milstein scheme, while Alfonsi,  Jourdain and Kohatsu-Higa prove in \cite{alfonsi2014optimal} that in the case of $d=1$ the traditional Euler scheme has a Wasserstein rate of convergence of $O(h^{2/3-\varepsilon})$ under the assumption of uniform ellipticity of $V$. 
Their proof critically relies upon the Lamperti transform which cannot be extended to the case of $d>1$ nor the non-elliptic case. Maintaining the ellipticity condition, the authors then extended their result to the multidimensional case using Malliavin calculus in \cite{alfonsi2014pathwise}. To be precise, the latter paper improved the rate to $O(h\sqrt{-\log h})$ but used a weaker form of the Wasserstein metric $\tilde{\Ww}_2$ (which is defined in (\ref{e-davie-bound}) below and is called a fixed-time approximation by Davie). These coupling results concerning the Euler scheme cannot be extended to all SDEs. Indeed, as we will discuss below, there exist non-elliptic SDEs for which any coupling of the Euler scheme with the true solution is at least $O(\sqrt{h})$ apart in the Wasserstein metric (\cite[Example 11.1]{davie2014pathwise}). 

\begin{Remark}
Other papers exploiting the separation of the input and output of SDE flows provided by the It\^{o} map of rough path theory include the $\varepsilon$-strong SDE simulation paper \cite{blanchet2014epsilon} of Blanchet et al, the SDE quantization paper of \cite{pages2011convergence}, and  \cite{riedel2014talagrand} by Riedel. The latter paper studied Gaussian rough paths and  transportation-cost inequalities from optimal transport theory. Other examples include \cite{friz2005approximations,ledoux2002large} which respectively prove the Stroock-Varadhan support theorem and the large deviation Freidlin-Wentzall estimates. These papers make elegant use of the It\^{o} map to reduce these non-trivial results to simpler statements about Brownian motion and L\'{e}vy area in the rough path topology (see \cite[Chapter IX]{FV}). 
\end{Remark}

We also mention that in \cite{davie2014pathwise} Davie presents an improvement of his previous approximations based on perturbing the Milstein scheme which achieves achieves a Wasserstein error of order $O(h)$ without requiring L\'{e}vy area simulation nor non-degeneracy assumptions on the vector fields. The proof follows the work of Kloeden, Platen and Wright \cite{kloeden1992approximation} by employing a truncation of the Fourier series expansion of L\'{e}vy area. 

\subsection{Connections with original Davie scheme}

Let us discuss the work of  \cite{davie2014kmt} in more detail from the perspective of the present paper. In \cite{davie2014kmt} Davie assumes there is no drift term in the original SDE (\ref{e-intro-sde}); that is, $V_0=0$. His scheme $\{\tilde{x}^{(j)}\}_{j=0}^N$ is defined by $\tilde{x}^{(0)}=x_0$ and 
\begin{align}
\tilde{x}^{(j+1)} = \tilde{x}^{(j)} + \sum^d_{k=1} W^{(j)}_k & V_k\bra{\tilde{x}^{(j)}} + \sum_{1\leq k < l \leq d} B^{(j)}_{kl} [V_k,V_l]\bra{\tilde{x}^{(j)}}\notag\\
& + \frac{1}{2}\sum_{1\leq k , l \leq d} \bra{W^{(j)}_k W^{(j)}_l - \delta_{kl}h} \sum_{m=1}^q \bra{V^m_k \frac{\partial V_l}{\partial x_m}}\bra{\tilde{x}^{(j)}}.\label{e-davie-scheme-original}
\end{align}
In other words, it is the Milstein scheme (\cite{mil1974approximate}) with the L\'{e}vy area increments $A^{(j)}$ replaced with the Gaussian approximations $B^{(j)}$ defined in (\ref{e-b-def}). Denote the laws of $\{\tilde{x}^{(j)}\}_{j=1}^N$ and  $\{x_{jh}\}_{j=1}^N$ on $\RR^{qN}$ by $\lambda$ and $\mu$ respectively. Under the non-degeneracy condition of (\ref{e-davie-non-deg}), Davie establishes the Wasserstein-type bound (\cite[Theorem 1]{davie2014kmt}):
\begin{equation}\label{e-davie-bound}
\tilde{\Ww}_2(\mu,\lambda) := \inf_{\Psi\in\Mm\bra{\mu,\lambda}} \max_{j=1,\ldots,N} \bra{
\int_{\RR^{qN}}  \norm{x_j - \Psi(x)_j}_{\RR^q}^2\, \mu(dx)}^{1/2} \leq Ch.
\end{equation}

Theorem \ref{t-big} allows us to prove our own coupling result for Davie's scheme (\ref{e-davie-scheme-original}), which holds irrespectively of the rank of the SDE vector fields and their Lie brackets. 

\begin{Corollary}\label{c-milstein-based}
Let $x$ be the solution of (\ref{e-intro-sde}) in the case that $V_0=0$. 
Fix $h>0$ and denote the laws of $\{\tilde{x}^{(j)}\}_{j=1}^N$ and $\{x_{jh}\}_{j=1}^N$ on $\RR^{qN}$ by $\lambda$, $\mu$ respectively. If $V=\{V_1,\ldots,V_d\}\in\textup{Lip}^\gamma(\RR^q)$ for $\gamma>2$, then there exists a constant $C=C(\norm{V}_{\textup{Lip}^\gamma}, \gamma)$ such that for all $\varepsilon>0$
\[
\Ww_2\bra{\mu,\lambda} \leq Ch^{1-2/\gamma-\varepsilon}.
\]
\end{Corollary}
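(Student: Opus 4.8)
The plan is to deduce the Corollary from Theorem \ref{t-big} by the triangle inequality for the Wasserstein metric. Let $\nu$ denote the law on $\RR^{qN}$ of the scheme $x^h=\{x^h_j\}_{j=1}^N$ defined by (\ref{e-scheme}) with $V_0=0$. Since $\Ww_2$ is a metric, $\Ww_2(\mu,\lambda)\le\Ww_2(\mu,\nu)+\Ww_2(\nu,\lambda)$, and the first term is $\le Ch^{1-2/\gamma-\varepsilon}$ directly by Theorem \ref{t-big}. So everything reduces to bounding $\Ww_2(\nu,\lambda)$, which I expect to be of strictly smaller order than $h^{1-2/\gamma-\varepsilon}$.

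For $\Ww_2(\nu,\lambda)$ I would use the synchronous coupling obtained by feeding the \emph{same} Gaussian family $\{W^{(j)},z^{(j)},\lambda^{(j)}\}_{j=0}^{N-1}$ of (\ref{e-normal}) -- hence the same increments $B^{(j)}$ of (\ref{e-b-def}) -- into both recursions (\ref{e-scheme}) and (\ref{e-davie-scheme-original}) with $V_0=0$, producing $(x^h_j)$ and $(\tilde{x}^{(j)})$ on a common probability space, and then estimate $\EE\bra{\max_{j\le N}\norm{x^h_j-\tilde{x}^{(j)}}_{\RR^q}^2}^{1/2}$. The two recursions differ only in their one-step maps: $x^h_{j+1}=\Phi^{(j)}(x^h_j)$, where $\Phi^{(j)}$ is the time-1 flow of the vector field $F^{(j)}:=\sum_k W^{(j)}_k V_k+\sum_{k<l}B^{(j)}_{kl}[V_k,V_l]$, whereas $\tilde{x}^{(j+1)}=\Psi^{(j)}(\tilde{x}^{(j)})$ with $\Psi^{(j)}$ the polynomial map on the right-hand side of (\ref{e-davie-scheme-original}). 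The key structural fact is the standard level-2 identification of the Milstein scheme with the log-ODE method: expanding $\Phi^{(j)}(y)=y+F^{(j)}(y)+\tfrac12(\nabla F^{(j)})F^{(j)}(y)+R^{(j)}(y)$ and grouping terms by the grading $W^{(j)}\sim h^{1/2}$, $B^{(j)}\sim h$, the polynomial part of $\Phi^{(j)}$ through the relevant order coincides with $\Psi^{(j)}$, so the one-step discrepancy $\ell_j:=\sup_y\norm{\Phi^{(j)}(y)-\Psi^{(j)}(y)}_{\RR^q}$ is made up only of the Taylor remainder $R^{(j)}$ and the higher-order cross-terms $W^{(j)}_kB^{(j)}_{mn}(\cdots)$ in $\tfrac12(\nabla F^{(j)})F^{(j)}$ that $\Psi^{(j)}$ omits. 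Using the $\textup{Lip}^\gamma$ Taylor estimate for flows of vector fields (cf.\ \cite{FV}), $\norm{R^{(j)}(y)}_{\RR^q}\le C(\norm{V}_{\textup{Lip}^\gamma},\gamma)(\abs{W^{(j)}}+\abs{B^{(j)}})^{\gamma\wedge3}$ uniformly in $y$; thus $\ell_j$ has $L^p$ norm of order $h^{(\gamma\wedge3)/2}$ for every $p\ge1$, and, since all odd joint moments of the variables (\ref{e-normal}) vanish, the dominant (cubic) part of $\ell_j$ is mean-zero.

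The main obstacle is then the propagation of these local errors over the $N=h^{-1}$ steps. Writing $e_j=\norm{x^h_j-\tilde{x}^{(j)}}_{\RR^q}$ one has $e_{j+1}\le\textup{Lip}(\Phi^{(j)})e_j+\ell_j$, but the crude estimate $\textup{Lip}(\Phi^{(j)})\le e^{\norm{\nabla F^{(j)}}_\infty}$ is useless because $\sum_j\norm{\nabla F^{(j)}}_\infty$ is of order $Nh^{1/2}=h^{-1/2}\to\infty$. Instead one must control, uniformly in $h$, the $L^p$ norms of the discrete Jacobian flows $\prod_{j\le i<J}\nabla\Phi^{(i)}$: each factor is $I+\sum_k W^{(i)}_k\nabla V_k+(\text{higher order})$, so the product is a discretization of the linear (rough) equation $dM=\nabla V(x)M\,d\WW^h$, whose solution enjoys $h$-uniform moment bounds by the basic estimates for linear RDEs -- equivalently, one realises both $(x^h_j)$ and $(\tilde{x}^{(j)})$ as outputs of RDE-type solution maps and invokes the local Lipschitz continuity of the It\^{o} map in the inhomogeneous $p$-variation metric, exactly as in the proof of Theorem \ref{t-big}. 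Inserting the $h$-uniform stability into the recursion for $e_j$, controlling the mean-zero part of the accumulated error by a Burkholder--Davis--Gundy inequality (which yields $O(h)$) and the residual systematic part crudely by the triangle inequality (which yields $O(h^{(\gamma/2-1)\wedge1})$), one arrives at $\Ww_2(\nu,\lambda)\le\EE\bra{\max_{j\le N}e_j^2}^{1/2}\le Ch^{(\gamma/2-1)\wedge1}$.

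Finally one checks the exponents: by the AM--GM inequality $\tfrac{\gamma}{2}+\tfrac{2}{\gamma}\ge2$, hence $\tfrac{\gamma}{2}-1\ge1-\tfrac{2}{\gamma}$, so $(\gamma/2-1)\wedge1\ge1-2/\gamma$ and therefore $\Ww_2(\nu,\lambda)\le Ch^{1-2/\gamma}\le Ch^{1-2/\gamma-\varepsilon}$ for every $\varepsilon>0$. Combined with the bound from Theorem \ref{t-big} and the triangle inequality, this gives $\Ww_2(\mu,\lambda)\le Ch^{1-2/\gamma-\varepsilon}$, as claimed.
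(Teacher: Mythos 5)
Your overall strategy — triangle inequality $\Ww_2(\mu,\lambda)\le\Ww_2(\mu,\nu)+\Ww_2(\nu,\lambda)$ and then a direct estimate of $\Ww_2(\nu,\lambda)$ via a synchronous coupling using the same $(W^{(j)},B^{(j)})$ in both recursions — is the same as the paper's, but you miss the one observation that makes the second estimate essentially trivial, and as a result you are forced into a considerably heavier (and incompletely justified) stability analysis.

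The paper's key point, which you do not use, is that when $V_0=0$ \emph{both} $\{x^h_j\}$ and $\{\tilde{x}^{(j)}\}$ are discrete martingales with respect to $\Ff_j=\sigma\{(W^{(i)},B^{(i)}):i\le j\}$: for $\tilde{x}^{(j)}$ this is immediate from the form of (\ref{e-davie-scheme-original}) since $W^{(j)}$, $B^{(j)}$ and $W^{(j)}_kW^{(j)}_l-\delta_{kl}h$ all have zero conditional mean, and the paper asserts the analogous property for the ODE flow defining $x^h_{j+1}$. Hence the \emph{whole} difference $x^h_j-\tilde{x}^{(j)}$ is a single martingale, Doob's $L^2$-maximal inequality converts the endpoint estimate $\EE\norm{x^h_N-\tilde{x}^{(N)}}^2\le Ch^2$ (obtained from a one-step $O(h^{3/2})$ local error) directly into $\EE\max_j\norm{x^h_j-\tilde{x}^{(j)}}^2\le Ch^2$, and one obtains $\Ww_2(\nu,\lambda)\le Ch$. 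No Jacobian-flow propagation, no splitting into mean-zero and systematic parts, and no extra exponent arithmetic is needed.

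Your replacement of this martingale observation by ``discrete Jacobian flows realised as a discretised linear RDE, plus BDG for the mean-zero part, plus a crude triangle inequality for the residual'' has two concrete gaps. First, the $h$-uniform integrability of $\prod_{i<J}\nabla\Phi^{(i)}$ can indeed be argued through the RDE realisation of the log-ODE scheme as $\pi_{(V,0)}(x_0,\XX^h)$, but the corresponding propagation map $\Psi^{(j)}$ of the Milstein scheme (\ref{e-davie-scheme-original}) is a polynomial map, not the time-$1$ flow of a vector field, and is \emph{not} realised as an It\^o-map output anywhere in the paper; so you would need a separate argument that $\prod_{i<J}\nabla\Psi^{(i)}$ has $h$-uniform $L^q$-bounds, which you do not supply. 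Second, the decomposition of the accumulated error into a ``mean-zero'' and a ``systematic'' part is asserted but not constructed: the local discrepancy $\Phi^{(j)}(y)-\Psi^{(j)}(y)$ is not simply a sum of an odd polynomial and a deterministic remainder, so the claim that BDG applies to the one piece and the triangle inequality to the other needs to be made precise before the exponent bookkeeping can be trusted. Your final bound $O(h^{(\gamma/2-1)\wedge 1})$ is weaker than the paper's $O(h)$, although still sufficient after your AM--GM check. In short, the route you sketch is plausible in outline but misses the short martingale/Doob argument the paper actually uses, and the steps replacing it are not yet watertight.
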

\begin{proof}
The proof is quick. Since the drift is zero, the scheme $\{\tilde{x}^{(j)}\}_{j=0}^N$ given by (\ref{e-davie-scheme-original}) is a discrete martingale with respect to the filtration $\Ff_j := \sigma\{(W^{(i)}, B^{(i)}) : i \leq j\}$. Similarly, since $V_0=0$, Fubini's Theorem gives us 
\[
\EE\bra{x^h_{j+1} \mid \Ff_j} = \EE\bra{\exp\bra{\sum_{k=1}^d W^{(j)}_kV_k + \sum_{1\leq k < l \leq d} B^{(j)}_{kl} [V_k,V_l]}(x^h_j) } = x^h_j.
\]
Thus the difference process $\{x^h_j - \tilde{x}^{(j)}\}_{j=0}^N$ is also a martingale with respect to the filtration $(\Ff_j)_{j=0}^N$.
By considering stochastic and deterministic Taylor expansions (see \cite[\S3]{gyurko2008rough}), it can be shown that $\EE(\norm{x^h_N-\tilde{x}^{(N)}}_{\RR^q}^2)\leq Ch^2$ for some constant $C>0$. Indeed, the scheme (\ref{e-davie-scheme-original}) is the truncated version of the two-step Taylor expansion based numerical approximation of the ODE sequence (\ref{e-scheme}). Alternatively, each vector field in the scheme (\ref{e-scheme}) can be decomposed as
\begin{align*}
\sum_{k=1}^d W^{(j)}_kV_k + \sum_{1\leq k < l \leq d} B^{(j)}_{kl} [V_k,V_l] 
&=
\underbrace{\sum_{k=1}^d W^{(j)}_kV_k + \sum_{1\leq k < l \leq d} A^{(j)}_{kl} [V_k,V_l]}_{=:G^{(j)}} + \underbrace{\sum_{1\leq k < l \leq d} \bra{B^{(j)}_{kl} - A^{(j)}_{kl}} [V_k,V_l]}_{=:H^{(j)}}.
\end{align*}
We can then view the Davie scheme (\ref{e-davie-scheme-original}) as originating when we take the second order expansion of $\exp(G^{(j)})$  (precisely as in the Milstein scheme), and a first order approximation of the $\exp(H^{(j)})$ component, (the latter being composed of coefficients with scaling $O(h)$ in $L^2$). To be precise: $\exp(G^{(j)}+H^{(j)}) \approx \tilde{G}^{(j)} + \tilde{H}^{(j)}$, where
\begin{align*}
\tilde{G}^{(j)}&:= 
\sum^d_{k=1} W^{(j)}_k  V_k + \sum_{1\leq k < l \leq d} A^{(j)}_{kl} [V_k,V_l]
 + \frac{1}{2}\sum_{1\leq k , l \leq d} \bra{W^{(j)}_k W^{(j)}_l - \delta_{kl}h} \sum_{m=1}^q {V^m_k \frac{\partial V_l}{\partial x_m}}\\
\tilde{H}^{(j)} &:=
\sum_{1\leq k < l \leq d} \bra{B^{(j)}_{kl}-A^{(j)}_{kl}} [V_k,V_l],
\end{align*}
and so $\tilde{G}^{(j)}+\tilde{H}^{(j)}$ gives (\ref{e-davie-scheme-original}) exactly. Thus the local $L^2$-error of the scheme is $O(h^{3/2})$, from which it follows that $\EE(\norm{x^h_N-\tilde{x}^{(N)}}_{\RR^q}^2)\leq Ch^2$. 

Hence Doob's maximal inequality yields
\[
\EE\bra{\max_{j=1,\ldots,N} \norm{x^h_j - \tilde{x}^{(j)}}_{\RR^q}^2} \leq 4\EE\bra{\norm{x^h_N-\tilde{x}^{(N)}}_{\RR^q}^2} \leq Ch^2.
\]
Combining this inequality with Theorem \ref{t-big} via the triangle inequality, we arrive at the claim.
\end{proof}

Thus in the case of a polynomial vector field system, we can prove that Davie's scheme achieves a Wasserstein rate of $O(h^{1-\varepsilon})$ for any $\varepsilon>0$ by setting $\gamma>\frac{2}{\varepsilon}$. 
While Theorem \ref{t-big} and Corollary \ref{c-milstein-based} do not achieve a convergence rate of $O(h)$, they are in some sense an improvement on Davie's bound (\ref{e-davie-bound}) since we do not impose any H\"{o}rmander restrictions on our vector fields and we use a stronger form of the Wasserstein metric. Certainly $\tilde{\Ww}_2(\mu,\lambda)\leq \Ww_2(\mu,\lambda)$, (which is analogous to the inequality: $\max_{i} \EE( \norm{X_i-Y_i}^2) \leq \EE(\max_{i} \norm{X_i-Y_i}^2)$). 

In the final remarks of \cite{davie2014kmt} Davie conjectured that it might be possible to prove that $\tilde{W}_2(\mu,\lambda) \leq Ch$ for any SDE (that is, without assuming the non-degeneracy condition (\ref{e-davie-non-deg})). In a sense, Corollary \ref{c-milstein-based} is a partial answer to this conjecture; given a vector field collection of sufficient regularity in the Stein-Lipschitz sense, we can prove that the approximation schemes (\ref{e-scheme}) and (\ref{e-davie-scheme-original}) converge with order $O(h^{1-\varepsilon})$ in our stronger Wasserstein metric. Moreover, this paper proves that the best possible rate in general is $O(-h\log h)$, (see Corollary \ref{c-optimal-bound} and Remark \ref{r-complexity}). This is established by adapting the counterexample given in \cite{davie2014kmt} which originally showed that $\tilde{\Ww}_2(\lambda,\mu) \geq Ch\log (h^{-1})$, where $\lambda,\mu$ correspond to the measures arising from the SDE defining the L\'{e}vy area of Brownian motion. Note that since the vector fields are polynomial in this case, we can set $\gamma>\frac{2}{\varepsilon}$ to achieve a rate of $O(h^{1-\varepsilon})$ from Theorem \ref{t-big}. 

One disadvantage of our approach is that we need to assume moderate  regularity conditions on $\textup{Lip}^\gamma(V)$ with $\gamma>4$. In effect, we are trading the algebraic H\"{o}rmander regularity of our vector fields in exchange for increased analytic Stein-Lipschitz conditions.


\subsection{Optimal Wasserstein rate for Euler scheme}

We now prove that in general the Wasserstein distance between the Euler scheme and the true solution is precisely of order $O(\sqrt{h})$. Alfonsi et al can establish the rate of  $O(h\sqrt{-\log h})$ under the assumption of uniform ellipticity of the vector fields; thus we give the details of the non-elliptic counterexample previously sketched in \cite[\S 11]{davie2014pathwise}. 

\begin{Example}
Consider the system with $d=1$, $q=2$ given by 
\[
dx_1=x_2\, dW, \gap dx_2=-x_1\, dW, \gap x(0) = (1,0).
\]
In this example the vector field is about as about regular as possible without being non-trivial; certainly $V(x)=(x_2,-x_1)^t$ is linear. 
Setting $S(t):= x_1(t)^2+x_2(t)^2$, we find that $S$ satisfies the deterministic differential equation $dS=Sdt$ with $S(0)=1$, and so we must have $S(t)=e^t$. Note that $S(1)=e$. 
The Euler scheme is given by $x^{(j+1)}_1 = x^{(j)}_1+x_2^{(j)}W^{(j)}$ and $x_2^{(j+1)}=x_2^{(j)}-x_1^{(j)}W^{(j)}$. Writing $S^{(j)}:=(x^{(j)}_1)^2+(x^{(j)}_2)^2$, it follows that $S^{(j+1)}=S^{(j)}(1+(W^{(j)})^2)$, and so
\[
S^{(N)} = \prod^{N-1}_{j=0} \bra{1+(W^{(j)})^2}. 
\]
We claim that there exists a universal constant $C_1>0$ such that
\begin{equation}\label{e-uc}
\abs{e-S^{(N)}}_{L^1}\geq C_1\sqrt{h}.
\end{equation} 
Indeed, using the inequality $\abs{e^a-e^b} = \abs{\int^a_b e^x\, dx} \geq \abs{a-b}$ for $a,b\geq 0$, and the fact that $\abs{\log(1+x)-x} = O(x^2)$ for $\abs{x}\leq \frac{1}{2}$, we have 
\begin{align*}
\EE\bra{\abs{e-S^{(N)}}}
\geq\EE\bra{\abs{1-\log\bra{S^{(N)}}}}
&=\abs{1-\sum_{j=1}^N \log\bra{1+hZ_j^2}}_{L^1}\\
&=\abs{1-\sum_{j=1}^N \bra{hZ_j^2 - \frac{1}{2}h^2Z_j^4 + \frac{1}{3}h^3Z_j^6 - \ldots}}_{L^1}\\ 
&\geq\abs{1-h\sum_{j=1}^N Z_j^2}_{L^1} - C_2h,
\end{align*} 
where the $\{Z_j\}_{j=1}^N$ are independent $N(0,1)$ random variables. 
Markov's inequality and the Central Limit Theorem guarantee that 
\begin{align*}
\sqrt{N}\EE\bra{\abs{e-S^{(N)}}} &\geq \sqrt{N}\abs{1-h\sum_{j=1}^N Z_j^2}_{L^1} + O(\sqrt{h})\\
&\geq \PP\bra{\sqrt{N}\bra{h\sum_{j=1}^N Z_j^2-1} \geq 1} + O(\sqrt{h})
\to \Phi\bra{1}> 0,
\end{align*}
and (\ref{e-uc}) follows. 
The Cauchy-Schwarz inequality then implies that 
\begin{align*}
C_1\sqrt{h} \leq \abs{e-S^{(N)}}_{L^1} = \abs{e-\norm{x^{(N)}}^2}_{L^1} &\leq \abs{\sqrt{e}-\norm{x^{(N)}}}_{L^2} \cdot\abs{\sqrt{e}+\norm{x^{(N)}}}_{L^2}\\
&= C_3\abs{\sqrt{e}-\norm{x^{(N)}}}_{L^2}
\end{align*}
where $C_3>0$. As $\norm{x(1)}^2=S(1)=e$ holds independently of the driving Brownian motion, this last line shows that the error of the Euler approximation cannot be less than $O(\sqrt{h})$ no matter what coupling is used. That is, there exists a constant $C>0$ such that 
\begin{align*}
\Ww_2\bra{\Ll\bra{\seq{x(jh)}_{j=1}^N}, \Ll\bra{\seq{x^{(j)})}_{j=1}^N}}  
&\geq 
\Ww_2\bra{\Ll\bra{x(1)}, \Ll\bra{x^{(N)}}}
\geq C\sqrt{h}. 
\end{align*}
On the other hand, since $d=1$, L\'{e}vy area is absent and hence the Milstein and (level 2) log-ODE schemes both give strong convergence errors of order $O(h)$. Moreover, the lack of L\'{e}vy area means that the level 1 and 2 log-ODE methods coincide.
\end{Example}

\subsection{Outline and notation of the paper}

The paper is organised as follows: the next section briefly summaries the necessary elements of rough path theory which is then followed by Section 3, where the original log-ODE method is introduced. We then define the class of piecewise abelian rough paths in Section 4 and show that the log-ODE method can be recast as the solution of a RDE driven by such a rough path $\WW^h$. Section 5 examines the components of L\'{e}vy area and defines our Gaussian piecewise abelian approximation $\XX^h$. The coupling of the random walks using Davie's coupling result is established in the proceeding section. Having constructed this coupling, Section 7 then examines the induced coupling between $\WW^h$ and $\XX^h$. Section 8 provides Wasserstein error estimates for the rough path lifts of the two piecewise abelian rough paths. These estimates are then put to use in Section 9 to prove Corollary \ref{c-main-2}, from which  Theorem \ref{t-big} follows immediately. Section 10 offers some concluding remarks regarding the feasibility of extensions of the main results to the case of fractional Brownian motion and higher order approximations. The paper is concluded with an appendix on the iterated Baker-Campbell-Hausdorff formula for random walks on Lie groups. 

\begin{Notation}
\textup{Throughout the paper, $C,c,\ldots$ denote various deterministic constants (that may vary from line to line), which are independent of $h$ or $n,m,k$. Constants which are dependent upon a variable will have the dependency explicitly stated. The usual bracket operation on $\RR^d$ is given by $[x,y]=x\otimes y - y\otimes x = \sum_{1\leq k < l \leq d}(x_ky_l-x_ly_k)[e_k,e_l]$, where $[e_k,e_l]:=e_k\otimes e_l - e_l\otimes e_k\in [\RR^d,\RR^d]$, and $\seq{e_k}_{k=1}^d$ denotes the canonical basis of $\RR^d$}. 
\end{Notation}


\section{Elements of rough path theory}

This section provides a quick tailored overview of relevant rough path theory and we take the opportunity to establish notation. Rough path analysis provides a method of constructing solutions to differential equations driven by paths that are not of bounded variation but have controlled roughness. A measure of this roughness is given by the $p$-variation of the path (see (\ref{e-p-var-def}) below).  For a detailed overview of the theory we direct the reader to \cite{friz2014course,FV,lejay2003importance,lyons1994differential,lyons1998differential,lyons2004differential,lyons2002system} among a multitude of others. 

\subsection{Algebraic preliminaries}

We introduce the necessary algebraic and geometric machinery in order to define rough path analysis. The foundation of the theory is given by the free tensor algebra and the free nilpotent Lie group embedded in it. 
Denote the space of continuous paths $x:[0,1]\to\RR^d$ by $C\bra{[0,1],\RR^d}$. Writing $x_{s,t}:=x_t-x_s$ for the increment, given $p\geq 1$ we define the $p$-variation norm of $x$ by 
\begin{equation}\label{e-p-var-def}
\norm{x}_{p\textup{-var;}[0,1]} := \sup_{D=(t_i)\subset [0,1]} \bra{\sum_i \norm{x_{t_i,t_{i+1}}}_{\RR^d}^p}^{1/p}.
\end{equation}
Let us denote by $C^{p\textup{-var}}([0,1],\RR^d)$ the linear subspace of $C([0,1],\RR^d)$ consisting of paths of finite $p$-variation. In the case of $x\in C^{p\textup{-var}}\bra{[0,1],\RR^d}$ for $p\in [1,2)$, the iterated integrals of $x$ are canonically defined via Young integration \cite{young1936inequality}. The collection of all these integrals as an object in itself is called the signature of the path:
\[
S(x)_{s,t} := 1 + \sum^\infty_{k=1} \int_{s<t_1<\ldots < t_k < t} dx_{t_1}\otimes \ldots \otimes dx_{t_k},
\]
where $(s,t)\in\Delta_{[0,1]}:=\{(s,t): 0\leq s \leq t \leq 1\}$. Adopting the convention that $(\RR^d)^{\otimes 0} = \RR$, we formally define the tensor algebras 
\[
T^{(\infty)}(\RR^d) := \bigoplus_{k=0}^\infty (\RR^d)^{\otimes k}, \gap\gap T^{(n)}(\RR^d) := \bigoplus_{k=0}^n (\RR^d)^{\otimes k}.
\]
We can see that the signature of $x$ takes its values in $T^{(\infty)}(\RR^d)$. Defining the canonical projections $\pi_n : T^{(\infty)}(\RR^d) \to (\RR^d)^{\otimes n}$ and $\pi_{0,n}: T^{(\infty)}(\RR^d) \to T^{(n)}(\RR^d)$, we can also consider the truncated signature:
\[
S_n(x)_{s,t} := \pi_{0,n}\bra{S(x)_{s,t}} = 1 + \sum^n_{k=1}  \int_{s<t_1<\ldots < t_k < t} dx_{t_1}\otimes \ldots \otimes dx_{t_k} \in \bigoplus_{k=0}^n (\RR^d)^{\otimes k} = T^{(n)}(\RR^d).
\]
Thus we can view $S_n(\cdot)$ as a continuous mapping from $\Delta_{[0,1]}$ to $T^{(n)}(\RR^d)$. 
Given a coordinate $e_{i_1} \otimes \ldots \otimes e_{i_k} \in (\RR^d)^{\otimes k}$, we define the corresponding projection of the signature via the dual space. For example, 
\[
\l<e_{i_1}^* \otimes \ldots \otimes e_{i_k}^*, S(x)_{s,t}\r> = \int_{s<t_1< \ldots < t_k < t} dx^{i_1}_{t_1} \ldots dx^{i_k}_{t_k} \in \RR.
\]
We equip each $(\RR^d)^{\otimes k}$ with the tensor algebra norm $\norm{\cdot}_{(\RR^d)^{\otimes k}}$ defined by 
\[
\norm{a}_{(\RR^d)^{\otimes k}} = \sqrt{\sum_{1\leq i_1,\ldots,i_k\leq d} \abs{\l< e_{i_1}^* \otimes \ldots \otimes e_{i_k}^*, a \r> }^2},
\] 
and when no confusion arises we shall simply write $\norm{a}$. This norm satisfies a compatibility relation between the tensor norms on the respective tensor levels in that
\[
\forall (a,b) \in (\RR^d)^{\otimes i} \times (\RR^d)^{\otimes (k-i)}, \gap \norm{a\otimes b}_{(\RR^d)^{\otimes k}} = \norm{a}_{(\RR^d)^{\otimes i}} \norm{b}_{(\RR^d)^{\otimes (k-i)}}.
\] 
We define a norm on $T^{(n)}(\RR^d)$ by
\[
\norm{g-h}_{T^{(n)}(\RR^d)} := \max_{k=1,\ldots,n}\norm{g-h}_{(\RR^d)^{\otimes k}}, \gap g,h \in T^{(n)}(\RR^d),
\]
which turns $T^{(n)}$ into a Banach algebra. 
It is a well-known fact that the signature $S_n(x)$ not only takes its values in $T^{(n)}(\RR^d)$, but it lies in a special nilpotent Lie group embedded in the tensor algebra. To be precise, the level-$n$ signature takes its values in the free step-$n$ nilpotent Lie group with the generators $\seq{e_k}_{k=1}^d$, which we denote by $G^{(n)}(\RR^d)$. Indeed, defining the free step-$n$ Lie algebra $\mathfrak{g}^{(n)}(\RR^d)$ by 
\[
\mathfrak{g}^{(n)}(\RR^d) := \RR^d \oplus [\RR^d,\RR^d] \oplus \ldots \oplus \underbrace{[\RR^d,[\ldots,[\RR^d,\RR^d]]]}_{(n-1) \textup{ brackets}},
\]
and the natural non-commutative exponential $\exp_n : T^{(n)}(\RR^d) \to T^{(n)}(\RR^d)$ by 
\[
\exp_n(a) := 1 + \sum^n_{k=1} \frac{a^{\otimes k}}{k!}, 
\]
we have $G^{(n)}(\RR^d) = \exp_n\bra{\mathfrak{g}^{(n)}(\RR^d)}$. Again using a formal power series, we can also define the truncated logarithm on $T^{(n)}(\RR^d)$:
\[
\log_n(a) := \sum_{k=1}^n \frac{(-1)^k}{k} (1-a)^{\otimes a} \textup{ for } a\in T^{(n)}(\RR^d) \textup{ such that } \pi_0\bra{a} = 1.
\]
The following characterization summarises the situation, (a proof can be found in \cite[Theorem 7.30]{FV}). 

\begin{Theorem}[\textbf{Chow-Rashevskii}]
We have 
\[
G^{(n)}(\RR^d) := \seq{S_n(x)_{0,1} : x \in C^{1\textup{-var}}\bra{[0,1],\RR^d}}. 
\]
\end{Theorem}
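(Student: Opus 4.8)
The plan is to prove the two inclusions separately. The easy direction is $\{S_n(x)_{0,1} : x \in C^{1\textup{-var}}([0,1],\RR^d)\} \subseteq G^{(n)}(\RR^d)$. For this I would first observe that the truncated signature of a bounded-variation path solves a linear ODE in $T^{(n)}(\RR^d)$, namely $dS_n(x)_{0,t} = S_n(x)_{0,t}\otimes dx_t$, and that multiplicativity (Chen's identity) $S_n(x)_{0,s}\otimes S_n(x)_{s,t} = S_n(x)_{0,t}$ holds by the definition of iterated integrals. It then suffices to check that the set $G^{(n)}(\RR^d) = \exp_n(\mathfrak g^{(n)}(\RR^d))$ is a closed subgroup of the group of group-like elements under $\otimes$, and that the tangent vectors $x'_t \in \RR^d \subseteq \mathfrak g^{(n)}(\RR^d)$ generate $\mathfrak g^{(n)}$ as a Lie algebra; since the solution of a linear ODE whose driving vector lies in a Lie subalgebra stays in the corresponding Lie subgroup, $S_n(x)_{0,1}$ lies in $G^{(n)}(\RR^d)$. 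Concretely, I would approximate $x$ by piecewise linear paths, for which $S_n$ is a finite product of terms $\exp_n(v_i)$ with $v_i\in\RR^d$, hence manifestly in $G^{(n)}$; closedness of $G^{(n)}$ under the $p$-variation (or uniform) topology then passes the property to the limit.

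The substantive direction is the reverse inclusion: every $g\in G^{(n)}(\RR^d)$ is realized as $S_n(x)_{0,1}$ for some bounded-variation $x$. Write $g = \exp_n(a)$ with $a = a_1 + a_2 + \cdots + a_n$, $a_k$ in the degree-$k$ component of $\mathfrak g^{(n)}$. The key algebraic input is that each $a_k$ is a linear combination of iterated Lie brackets of the generators $e_1,\dots,e_d$. I would build $x$ as a concatenation: first traverse a segment producing $\exp_n(a_1)$ — just a straight line in direction $a_1\in\RR^d$. Then, to install higher brackets, use the classical commutator trick: a bracket $[e_i,e_j]$ (and iterated brackets) can be generated, modulo higher-order terms, by running a small loop $e_i, e_j, -e_i, -e_j$, and by rescaling such loops one produces $\exp_n(s[e_i,e_j])$ exactly in $G^{(2)}$ and up to controllable error in $G^{(n)}$. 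Concatenating finitely many suitably scaled loops and lines, and using Chen's identity to multiply the resulting signatures, one can hit any prescribed group element; a clean induction on the nilpotency step $n$ (peeling off $\exp_n(a_{\le n-1})$ first, then correcting the top level $a_n$, which is central, by appending loops whose signature is trivial below level $n$) makes this precise.

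The main obstacle is the reverse inclusion, specifically making the commutator/loop construction yield the group element \emph{exactly} rather than approximately. The difficulty is that a naive product of loops only generates the target up to higher-order corrections, and one must argue that those corrections can themselves be cancelled by further loops — this is where the induction on $n$ and the centrality of the top-degree component are essential, together with the fact that $\exp_n$ restricted to $\mathfrak g^{(n)}$ is a diffeomorphism onto $G^{(n)}$ with polynomial (hence invertible, triangular in the grading) structure. An alternative, and perhaps the cleanest route, is to appeal directly to the connectivity of the nilpotent Lie group $G^{(n)}(\RR^d)$ by horizontal paths: since $\RR^d$ bracket-generates $\mathfrak g^{(n)}$, the left-invariant distribution spanned by the generators is bracket-generating, and the Chow–Rashevskii theorem for sub-Riemannian geometry guarantees any two points are joined by a horizontal curve; the development of such a curve back down to $\RR^d$ is exactly the required bounded-variation path $x$ with $S_n(x)_{0,1}=g$. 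I would cite \cite[Theorem 7.30]{FV} for the details but sketch this horizontality argument as the conceptual heart.
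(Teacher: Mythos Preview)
The paper does not give its own proof of this theorem: it simply states the result and refers the reader to \cite[Theorem 7.30]{FV}. Your sketch is a correct outline of the standard argument --- the easy inclusion via Chen's identity and piecewise linear approximation, and the hard inclusion either by the commutator-loop induction on the nilpotency step or, equivalently, by invoking the sub-Riemannian Chow--Rashevskii connectivity theorem for the bracket-generating horizontal distribution on $G^{(n)}(\RR^d)$ --- and you already point to the same reference, so there is nothing in the paper to compare against.
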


More abstractly, after fixing $p\geq 1$ we can consider a continuous group-valued path $\XX: [0,1] \to G^{(\floor{p})}(\RR^d)$:
\[
\XX_t = \bra{1,\XX^1_t, \ldots, \XX^{\floor{p}}_t} \in G^{(\floor{p})}(\RR^d)\textup{ where } \pi_k(\XX_t) = \XX^k_t.
\]
Importantly, the group structure provides a natural non-commutative notion of increment: $\XX_{s,t} := \XX_{s}^{-1} \otimes \XX_{t}$.
This multiplication operation is well-defined by Chen's Theorem \cite[Theorem 2.9]{lyons2004differential}. 

\subsection{Carnot-Carath\'{e}odory norm}

There exists a symmetric and sub-additive norm on $G^{(\floor{p})}(\RR^d)$ which is homogeneous with respect to the natural  dilation operator on the tensor algebra, (see \cite{FV} for details). This so-called Carnot-Carath\'{e}odory norm is given by 
\[
\norm{g}_{C} := \inf\seq{\int^1_0 \abs{d\gamma} : \gamma\in C^{1\textup{-var}}\bra{[0,1],\RR^d} \textup{ such that } S_N(\gamma)_{0,1} = g}, 
\]
which is well-defined by the Chow-Rashevskii Theorem. 
We may then define the homogeneous $p$-variation metric between $p$-rough paths $\XX,\YY\in C([0,1],G^{(\floor{p})}(\RR^d))$:
\begin{equation*}
d_{p\textup{-var;}[0,1]}\bra{\XX,\YY} := \sup_{D=(t_i)\subset [0,1]} \bra{\sum_{i} \norm{\XX_{t_i,t_{i+1}}^{-1}\otimes \YY_{t_i,t_{i+1}}}^p_{C}}^{1/p}.
\end{equation*}
The $p$-variation norm is given by $\norm{\XX}_{p\textup{-var;}[0,1]} = d_{p\textup{-var;}[0,1]}\bra{\XX,1}$. If this latter quantity is finite, then $\omega(s,t):=\norm{\XX}^p_{p\textup{-var;}[0,1]}$ is a control; a continuous bounded function, which vanishes on the diagonal $\seq{(t,t): t\in [0,1]}$, and is super-additive in that for all $s<t<u$ in $[0,1]$:
\[
\omega(s,u) + \omega(u,t) \leq \omega(s,t).
\]
Similarly we define the homogeneous $1/p$-H\"{o}lder metric and norm  by 
\[
d_{1/p\textup{-H\"{o}l;}[0,1]}\bra{\XX,\YY} := \sup_{0\leq s < t \leq 1} \frac{\norm{\XX_{s,t}^{-1}\otimes \YY_{s,t}}_{C}}{\abs{t-s}^{1/p}}, \gap \norm{\XX}_{1/p\textup{-H\"{o}l;}[0,1]}(\XX) = d_{1/p\textup{-H\"{o}l;}[0,1]}\bra{\XX,1},
\]
and define the rough path spaces:
\begin{align*}
C^{p\textup{-var}}\bra{[0,1],G^{(\floor{p})}(\RR^d)} &= \seq{\XX \in C\bra{[0,1],G^{(\floor{p})}(\RR^d)} : \norm{\XX}_{p\textup{-var;}[0,1]}(\XX) < \infty}\\
C^{1/p\textup{-H\"{o}l}}\bra{[0,1],G^{(\floor{p})}(\RR^d)} &= \seq{\XX \in C\bra{[0,1],G^{(\floor{p})}(\RR^d)} : \norm{\XX}_{1/p\textup{-H\"{o}l;}[0,1]}(\XX) < \infty}. 
\end{align*}
We stress that these spaces are not vector spaces; the addition of two rough paths, while being well-defined in the tensor algebra, may not sum up to a group element. 

\subsection{Inhomogeneous metrics on rough path space}

The inhomogeneous $p$-variation and $1/p$-H\"{o}lder metrics for $p$-rough paths are defined by ignoring the group structure of $G^{(\floor{p})}(\RR^d)$, and instead using the inherited norm from the tensor algebra:
\begin{align*}
\inhomd{p}{s}{t}\bra{\XX,\YY}:&=\max_{k=1,\ldots,\floor{p}} \inhomd{p}{s}{t}^{(k)}\bra{\XX,\YY}, \gap\gap
\rho_{1/p\textup{-H\"{o}l;}[0,1]}\bra{\XX,\YY}:=\max_{k=1,\ldots,\floor{p}} \rho^{(k)}_{1/p\textup{-H\"{o}l;}[0,1]}\bra{\XX,\YY}
\end{align*}
where 
\begin{align*}
\inhomd{p}{s}{t}^{(k)}\bra{\XX,\YY}
:&= \sup_{(t_i)\subset [s,t]} \bra{\sum_{i} \norm{\pi_k\bra{\XX_{t_i,t_{i+1}} - \YY_{t_i,t_{i+1}}}}_{(\RR^d)^{\otimes k}}^{p/k}}^{k/p}\\
\rho^{(k)}_{1/p\textup{-H\"{o}l;}[0,1]}\bra{\XX,\YY} :&= \sup_{0\leq s < t \leq 1} \frac{\norm{\pi_k\bra{\XX_{s,t}-\YY_{s,t}}}_{(\RR^d)^{\otimes k}}}{\abs{t-s}^{k/p}}.
\end{align*}
Given a control function $\omega:\Delta_{[0,1]}\to [0,\infty)$, we also define the metric
\begin{align*}
\rho_{p\textup{-}\omega\textup{;}[0,1]}\bra{\XX,\YY} &= \max_{k=1,\ldots,\floor{p}} \rho^{(k)}_{p\textup{-}\omega\textup{;}[0,1]}\bra{\XX,\YY} 
\textup{where } 
\rho^{(k)}_{p\textup{-}\omega\textup{;}[0,1]}\bra{\XX,\YY} = \sup_{0\leq s < t \leq 1} \frac{\norm{\pi_k\bra{\XX_{s,t}-\YY_{s,t}}}_{(\RR^d)^{\otimes k}}}{\omega(s,t)^{k/p}}.
\end{align*}

\subsection{Geometric rough paths}

The space of weakly geometric $p$-rough paths will be denoted by $WG\Omega_p(\RR^d)$. This is the set of continuous paths $\XX$ with values in $G^{(\floor{p})}(\RR^d)$ such that $\norm{\XX}_{p\textup{-var;}[0,1]}<\infty$. A refinement of this notion is the space of geometric $p$-rough paths, denoted by $G\Omega_p(\RR^d)$, which is the closure of 
\[
\seq{S_{\floor{p}}(x) : x\in C^{1\textup{-var}}\bra{[0,1],\RR^d}}
\]
with respect to the rough path metric $d_{p\textup{-var}}$, (or equivalently $\rho_{p\textup{-var}}$). Certainly we have the inclusion $G\Omega_p(\RR^d) \subset WG\Omega_p(\RR^d)$ and it turns out that this inclusion is strict. As described in \cite[\S3.2.2]{lyons2004differential}, this insignificant difference between geometric and weakly geometric rough paths can be compared to the difference between $C^1$ and Lipschitz functions. 

We make a note that given $\XX\in WG\Omega_p(\RR^d)$ and some $q\geq p$, there is a canonical extension of $\XX$ to a $q$-rough path $S_{\floor{q}}(\XX)\in WG\Omega_q(\RR^d)$ such that $\pi_{0,\floor{p}}\bra{S_{\floor{q}}(\XX)} = \XX$. 
This so-called rough path lift operation is unique in that sense that there exists a constant $C=C(p,q)$ such that 
\[
\norm{\XX}_{p\textup{-var;}[0,1]} \leq \norm{S_{\floor{q}}(\XX)}_{p\textup{-var;}[0,1]} \leq  C\norm{\XX}_{p\textup{-var;}[0,1]}.
\]

\subsection{Rough differential equations}

For now let $x\in C^{1\textup{-var}}([0,1],\RR^d)$ and let $y\in C([0,1],\RR^q)$ denote the solution of the (controlled) ordinary differential equation
\[
dy_t = V(y_t) \,dx_t + V_0(y_t)\,dt := \sum^d_{k=1} V_k(y_t)\,dx^k_t + V_0(y_t)\,dt, \gap y_0 \in \RR^q,
\]
which we summarise with the notation: $y=\pi_{(V,V_0)}(y_0,x)$. Here $\seq{V_k}_{k=0}^d$ is a collection of suitably regular vector fields $V_k : \RR^q \to \RR^q$. 

\begin{Definition}
Let $\XX\in WG\Omega_p(\RR^d)$ for some $p\geq 1$. We say that $y\in C\bra{[0,1],\RR^q}$ is a solution to the rough differential equation (RDE) with drift driven by $\XX$ along the collection of vector fields $V=\{V_k\}_{k=1}^d,V_0$, and started from $y_0 \in \RR^q$, if there exists a sequence $\seq{x_n}_{n=1}^\infty \subset C^{1\textup{-var}}\bra{[0,1],\RR^d}$ such that:
\[
\lim_{n\to\infty}  \sup_{0\leq s < t \leq 1} \norm{\XX_{s,t}^{-1} \otimes S_{\floor{p}}(x_n)_{s,t}}_{C}=0, \gap\gap\gap
\sup_n \pnorm{S_{\floor{p}}\bra{x_n}}{p}<\infty,
\]
and the sequence of ODE solutions $y_n := \pi_{(V,V_0)}(y_0,x_n)$ satisfies:
\[
\norm{y_n - y}_\infty \to 0 \textup{ as } n\to\infty.
\]
We denote this situation with the (formal) equation: $dy_t = V(y_t)d\XX_t + V_0(y_t)dt$, which we refer to as a rough differential equation (with drift), retaining the notation $y=\pi_{(V,V_0)}(y_0,\XX)$, 
\end{Definition}

Given $y_0 \in \RR^q$, the mapping
\begin{align*}
\Xi : WG\Omega_p(\RR^d) &\to C\bra{[0,1],\RR^q} : 
\XX \mapsto \pi_{(V,V_0)}(y_0, \XX), 
\end{align*}
is known as the It\^{o} map. 
The initial raison d'\^{e}tre of rough path theory was that the map
\[
\Xi \circ S_n(\cdot) :  x\in C^{p\textup{-var}}\bra{[0,1],\RR^d} \mapsto S_n(x) \in WG\Omega_p(\RR^d) \mapsto \pi_{(V,V_0)}\bra{y_0, S_{\floor{p}}(x)} \in C\bra{[0,1],\RR^q}
\]
is not continuous with respect to standard uniform topology on $C([0,1],\RR^d)$ if $d>1$.  Counterexamples are easy to construct, (see \cite[\S1.5]{lyons2004differential}). In fact, the mapping $\Xi \circ S_n(\cdot)$ is continuous in the $p$-variation topology; that is, using the metric $d_{p\textup{-var;}[0,1]}$, (equivalently $\rho_{p\textup{-var;}[0,1]}$). One can think of the It\^{o} map as a generalization of the classical Lamperti transform of SDE theory to multidimensional Brownian motion. 

\subsection{SDEs as RDEs}

Part of the success of rough path theory has been its power of providing a pathwise construction of stochastic calculus; we fix a sample path of Brownian motion and can define the solution of the SDE without probability theory. As before, let $W$ denote standard $d$-dimensional Brownian motion. 
We first define enhanced Brownian motion as the rough path $\WW\in C([0,1],G^{(2)}(\RR^d))$ given by 
\begin{align}
\WW_{s,t} &= 1 + W_{s,t} + \int^t_s W_{s,u} \otimes \circ dW_{u}\notag\\
&=\exp_2\bra{\sum^d_{k=1} W_k(s,t) + \sum_{1\leq k < l \leq d} A_{kl}(s,t)}\notag\\ 
&= 1 + \sum^d_{k=1} W_k(s,t) + \frac{1}{2}\sum_{1\leq k, l \leq d} W_k(s,t) \otimes W_l(s,t) + \sum_{1\leq k < l \leq d} A_{kl}(s,t) \in G^{(2)}(\RR^d),\label{e-brown-log-sig}
\end{align}
where $A_{kl} : \Delta_{[0,1]} \to [\RR^d,\RR^d]$ is the L\'{e}vy area of $W$:
\[
A_{kl}(s,t) := \frac{1}{2}\int^{t}_{s} \bra{W_k(s,u)\, dW_l(u) - W_l(s,u)\, dW_k(u)}.
\]
It can be shown that $\WW\in G\Omega_p(\RR^d)$ almost surely. 
Consider the following Stratonovich SDE driven by $W$:
\begin{equation}\label{e-rough-path-sde}
dx(t) = V(x(t))\circ dW(t) + V_0(x(t))\, dt := \sum_{k=1}^d V_k(x(t))\, dW_k(t) + V_0(x(t))\,dt, \gap x_0 = \xi_0 \in \RR^q, 
\end{equation}
where $V_0\in\textup{Lip}^1(\RR^q)$ and $V=\{V_k\}^d_{k=1} \in \textup{Lip}^\gamma(\RR^q)$, $\gamma>2$. Then it can be proven, (see \cite[Theorem 17.3]{FV}), that $x$ coincides with the unique solution $y$ of the following RDE with drift:
\[
dy_t = V\bra{y_t}d\WW_t + V_0(y_t) \, dt, \gap y_0 = \xi_0 \in \RR^q.
\]
In terms of the It\^{o} map of the previous subsection:
\[
\Xi : \WW \in G\Omega_p(\RR^d) \mapsto \pi_{V,V_0}\bra{\xi_0, \WW} = y \in C\bra{[0,1],\RR^q}. 
\]

\begin{Remark}
From the point of view of existence and uniqueness results, the appropriate way to measure the regularity of the $V=\{V_k\}_{k=1}^d$ turns out to be the notion of $\gamma$-Lipschitz in the sense of Stein \cite[Chapter XI]{stein1970singular}. Since this notion of Lipschitz is standard throughout the rough path literature, we omit the definition for the sake of brevity (see \cite[\S1.2.2]{lyons1998differential} and \cite[Definition 1.21]{lyons2004differential} for precise details). Informally, the definition states that the vector field can be approximated locally by a function taking values in polynomial functions. In contrast, Taylor expansions view a classical Lipschitz function as a function taking values in a power series; that is, a polynomial itself (cf. \cite[\S2]{lyons2015theory}). 

The notion provides a norm on the space of such vector fields, which we denote by 
\[
\norm{V}_{\textup{Lip}^\gamma} = \max_{k=1,\ldots,d} \norm{V_k}_{\textup{Lip}^\gamma}.
\]
If $\gamma>2$ then it can be proven that there exists a unique solution to (\ref{e-rough-path-sde}) almost surely (\cite[Theorem 17.3]{FV}).
\end{Remark}

\subsection{RDE Lipschitz estimates}

It is well-known that the Lipschitz constant of the It\^{o} map $\Xi$ for a RDE driven by a $p$-rough path $\XX$ is of the order $O(\exp\{C\norm{\XX}^p_{p\textup{-var};[0,1]}\})$. Even for the well-studied case of Gaussian rough paths, this random variable constant fails to be finite in any $L^q$-norm, $q\geq 1$. Cass, Litterer and Lyons rectified this problem by refining the deterministic estimates for the Lipschitz constant in \cite{cass2013integrability}. 
We start by recalling their definition of the so-called greedy $p$-variation partition.

\begin{Definition}
Let $\XX\in WG\Omega_p(\RR^d)$. For $\alpha>0$ and $[s,t]\subseteq [0,1]$, set
\begin{align*}
\tau_0(\alpha) &= s\\
\tau_{n+1}(\alpha) &= \inf\seq{ u: \norm{\XX}_{p\textup{-var;}[\tau_n,u]}^p \geq \alpha \textup{ and } \tau_n(\alpha) < u \leq t} \wedge t.
\end{align*}
Define the integer
\[
N_{\alpha,p}(\XX,[s,t]):= \sup\seq{n\in\NN\cup\seq{0} : \tau_n(\alpha) < t}. 
\]
\end{Definition}

Certainly $N_{\alpha,p}(\XX,[s,t]) \leq C \norm{\XX}^p_{p\textup{-var;}[s,t]}$ for some constant $C>0$, but more importantly the tail estimates for $N_{\alpha,p}(\XX,[s,t])$ are significantly tighter than for $\norm{\XX}^p_{p\textup{-var;}[s,t]}$ when we consider Gaussian rough paths $\XX$ (cf. \cite{cass2013integrability,friz2013integrability}).  The following result is a slight variation of \cite[Lemma 4.2]{cass2013evolving}. 

\begin{Proposition}\label{p-lipschitz-rde}
Let $\gamma>p\geq 1$ and suppose $\XX^1,\XX^2 \in WG\Omega_p(\RR^d)$.
Define the control $\omega:\Delta_{[0,1]}\to[0,\infty)$ by 
\[
\omega(s,t):=\abs{t-s} + \sum^2_{i=1}\norm{\XX^i}_{p\textup{-var;}[s,t]}^p + \sum^{\floor{p}}_{k=1} \bra{\frac{\rho_{p\textup{-var;}[s,t]}\bra{\XX^1,\XX^2}}{\rho_{p\textup{-var;}[0,1]}\bra{\XX^1,\XX^2}}}^{p/k}.
\]
Finally, let $V=\seq{V_k}_{k=1}^d$ be a collection of $\textup{Lip}^\gamma(\RR^q)$ vector fields and let $V_0\in \textup{Lip}^1(\RR^q)$. Then the RDEs 
\[
dy^i_t = V(y^i_t)\, d\XX^i_t + V_0(y^i_t) \,dt, \gap y^1_0=y^2_0\in\RR^q,
\]
have unique solutions. 
Moreover, for every $\alpha>0$ there exists some constant $C=C(\alpha, \gamma, p,\norm{V}_{\textup{Lip}^\gamma})$ such that 
\begin{align*}
\rho_{p\textup{-var;}[0,1]}(y^1,y^2) 
&\leq C\omega(0,1)\bra{1\vee \omega(0,1)^{\floor{p}+1}} \rho_{p\textup{-var;}[0,1]}\bra{\XX^1,\XX^2}\\
&\gap\gap\gap\gap\gap\gap\gap\gap\gap\gap\gap\gap
\cdot\exp\bra{C\seq{1+N_{\alpha,p}(\XX^1,[0,1])+N_{\alpha,p}(\XX^2,[0,1])}}.
\end{align*}
\end{Proposition}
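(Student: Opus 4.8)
The plan is to reduce this to the quantitative RDE estimates of Cass--Litterer--Lyons by a standard localisation-and-concatenation argument, treating the drift by the usual trick of adjoining a time component. First I would absorb the drift $V_0$ into the rough path: set $\hat{\XX}^i$ to be the joint lift of $\XX^i$ together with the parametrisation $t\mapsto t$, living in $WG\Omega_p(\RR^{d+1})$, and let $\hat V=(V,V_0)$ be the corresponding collection of $\textup{Lip}^\gamma$ vector fields (here we use $\gamma>p$ and $V_0\in\textup{Lip}^1\subseteq\textup{Lip}^\gamma$ on compacts, or rather re-read the CLL estimates in the with-drift form). Existence and uniqueness of $y^i$ is then \cite[Theorem 17.3]{FV}, or directly the local existence theorem together with the control $\omega$ below, which dominates $\abs{t-s}$ and $\norm{\XX^i}^p_{p\textup{-var;}[s,t]}$ simultaneously.

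The core is the following local estimate on an interval $[s,t]$ with $\omega(s,t)\le\alpha$ for a fixed small $\alpha$ depending only on $p,\gamma,\norm{V}_{\textup{Lip}^\gamma}$: on such an interval one has the Lipschitz-type bound
\[
\rho_{p\textup{-var;}[s,t]}(y^1,y^2)\;\le\;C\,\omega(s,t)^{1/p}\,\Big(\tfrac{\rho_{p\textup{-var;}[s,t]}(\XX^1,\XX^2)}{\rho_{p\textup{-var;}[0,1]}(\XX^1,\XX^2)}\Big)\,\rho_{p\textup{-var;}[0,1]}(\XX^1,\XX^2)\;+\;\text{(lower-order)},
\]
which is exactly the content of \cite[Lemma 4.2]{cass2013evolving} (or the core estimate behind \cite{cass2013integrability}); the key point is that the inclusion of the normalised term $\big(\rho_{p\textup{-var;}[s,t]}/\rho_{p\textup{-var;}[0,1]}\big)^{p/k}$ inside the definition of $\omega$ is precisely what makes the $\XX^1,\XX^2$-difference appear linearly with an $\omega(s,t)$-gain on each small interval. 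I would then cover $[0,1]$ by the greedy partition associated to $\omega$ at level $\alpha$, whose cardinality is controlled by $1+N_{\alpha,p}(\XX^1,[0,1])+N_{\alpha,p}(\XX^2,[0,1])$ (since $\omega$ super-additively dominates each $\norm{\XX^i}^p_{p\textup{-var}}$, each greedy interval for $\omega$ refines the greedy intervals for the individual rough paths, up to a constant factor in $\alpha$). On each such interval the local estimate gives a contraction-type recursion for the difference of the solution flows; iterating across the $N:=N_{\alpha,p}(\XX^1,[0,1])+N_{\alpha,p}(\XX^2,[0,1])+1$ intervals and tracking how the error propagates through the flow (each step multiplies the accumulated error by a factor $1+C\omega(\cdot)\le 1+C\alpha$, hence the geometric growth $e^{C N}$), one obtains
\[
\rho_{p\textup{-var;}[0,1]}(y^1,y^2)\;\le\;C\Big(\sum_j\omega(s_j,s_{j+1})\Big)\Big(1\vee\omega(0,1)^{\floor p+1}\Big)\,\rho_{p\textup{-var;}[0,1]}(\XX^1,\XX^2)\,e^{C(1+N_{\alpha,p}(\XX^1)+N_{\alpha,p}(\XX^2))},
\]
and super-additivity of $\omega$ bounds $\sum_j\omega(s_j,s_{j+1})\le\omega(0,1)$, giving the stated form. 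The factor $1\vee\omega(0,1)^{\floor p+1}$ comes from converting the higher-level ($k\ge 2$) pieces of the $\rho_{p\textup{-var}}$-distance, where the normalised term enters with exponent $p/k$, back to the level-1 scaling.

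\textbf{Main obstacle.} The delicate step is the iteration/concatenation: one must show that the difference $\rho_{p\textup{-var;}[0,1]}(y^1,y^2)$ is genuinely controlled by the \emph{sum} of the local differences times the geometric flow factor, rather than just $\max_j$, and that re-expanding from $p$-variation on $[s_j,s_{j+1}]$ to $p$-variation on $[0,1]$ does not lose a power of $N$. This is handled by the same bookkeeping as in \cite[Lemma 4.2]{cass2013evolving}: one propagates a bound on $\big|y^1_{s_j}-y^2_{s_j}\big|$ together with the local $p$-variation increment, uses the Lipschitz dependence of the RDE flow on its starting point (whose constant on each $\omega$-controlled interval is $O(1)$) to carry the accumulated point-difference forward, and only at the end uses super-additivity of $\omega$ plus $\omega(0,1)\le N\alpha$-type bounds to collapse the sum. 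A secondary technical point is checking that the with-drift version of the CLL estimate is available in the form needed — this is routine via the time-augmentation above, noting $\textup{Lip}^1$ suffices for the drift since it only contributes a bounded-variation, hence tame, direction to $\hat\XX^i$ that does not affect $N_{\alpha,p}$ beyond an innocuous constant.
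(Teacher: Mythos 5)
Your approach is essentially the same as the paper's: both reduce to the quantitative estimate of \cite[Lemma 4.2]{cass2013evolving} (which already packages the local contraction and greedy-interval iteration you describe as the ``main obstacle''), and then translate the resulting exponent into $N_{\alpha,p}(\XX^1)+N_{\alpha,p}(\XX^2)$ and the inhomogeneous $\omega$-metric $\rho_{p\text{-}\omega}$ back into $\rho_{p\textup{-var}}$ via super-additivity and a normalization argument, which is where the prefactor $\omega(0,1)\bigl(1\vee\omega(0,1)^{\floor{p}+1}\bigr)$ comes from. The one point where your sketch is imprecise is the step asserting that the greedy count $N_\alpha(\omega)$ for the composite control is dominated by the individual $N_{\alpha,p}(\XX^i)$: your heuristic that ``each greedy interval for $\omega$ refines the greedy intervals for the individual rough paths'' gives, on its own, an inequality in the \emph{wrong} direction (more $\omega$-intervals, not fewer); the correct statement, which the paper uses, is \cite[Lemma 6]{bayer2013rough}, namely that the greedy count of a finite sum of controls is bounded by a constant times one plus the sum of the individual greedy counts.
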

\begin{proof}
The proof is obtained from following the arguments of \cite[Lemma 4.2]{cass2013evolving} with some minor modifications. Indeed, the latter result guarantees that 
\begin{equation*}
\rho_{p\textup{-}\omega\textup{;}[0,1]}(y^1,y^2) \leq C\rho_{p\textup{-}\omega\textup{;}[0,1]}\bra{\XX^1,\XX^2}\exp\bra{CM_{\alpha,[0,1]}(\omega)}, 
\end{equation*}
where 
\[ 
M_{\alpha,[s,t]}(\omega) := \sup_{\substack{D=(t_i)\subset [s,t] \\ \omega(t_i,t_{i+1})\leq \alpha}} \sum_{i} \omega(t_i,t_{i+1}). 
\]
It can be shown that $N_{\alpha,p}\bra{\omega,[0,1]} \leq M_{\alpha,[0,1]}(\omega) \leq 2N_{\alpha,p}\bra{\omega,[0,1]} + 1$. 
Moreover, by \cite[Lemma 6]{bayer2013rough} there exists a constant $C>0$ such that 
\[
N_\alpha\bra{\omega,[0,1]} \leq C\bra{1+ N_{\alpha,p}\bra{\XX^1,[0,1]} + N_{\alpha,p}\bra{\XX^2,[0,1]}}.
\]
Therefore, 
\begin{equation}\label{e-new-bound}
\rho_{p\textup{-}\omega\textup{;}[0,1]}(y^1,y^2) \leq C\rho_{p\textup{-}\omega\textup{;}[0,1]}\bra{\XX^1,\XX^2}\exp\bra{C\seq{1+N_{\alpha,p}(\XX^1,[0,1])+N_{\alpha,p}(\XX^2,[0,1])}}.
\end{equation}
Under the assumption that $\omega(0,1)\leq 1$, we can refer to the proof of \cite[Theorem 4]{bayer2013rough} to find that
\begin{align*}
&\norm{\XX^i}_{p\textup{-}\omega\textup{;}[0,1]} \leq 1,\, i=1,2 \textup{ and } \rho_{p\textup{-}\omega\textup{;}[0,1]}\bra{\XX^1,\XX^2} \leq \rho_{p\textup{-var;}[0,1]}\bra{\XX^1,\XX^2}.
\end{align*}
For the general case, a normalization argument gives 
\begin{equation}\label{e-general-case}
\rho_{p\textup{-}\omega\textup{;}[0,1]}\bra{\XX^1,\XX^2} \leq \bra{1 \vee\omega(0,1)^{\floor{p}}}\rho_{p\textup{-var;}[0,1]}\bra{\XX^1,\XX^2}.
\end{equation}
As a consequence of the super-additivity of controls (cf. \cite[\S8.1]{FV}),
\[
\rho_{p\textup{-var;}[0,1]}(y^1,y^2)\leq 
\omega(0,1)\rho_{p\textup{-}\omega\textup{;}[0,1]}(y^1,y^2),
\]
and combining this inequality with (\ref{e-new-bound}) and (\ref{e-general-case}) concludes the proof. 
\end{proof}


\section{Log-ODE method}\label{s-log-ode}

The log-ODE method is a powerful technique for numerically simulating the solutions of SDEs or, more generally, RDEs \cite{friz2008euler,lyons2014rough}. The method even holds for RDEs in infinite-dimensional Banach space, (see  \cite{bailleul2014flows,boutaib2013dimension,danyu2014}). For this paper we restrict our study of the log-ODE method to level $m=2$, though the scheme can be extended to an arbitrary level $m$.
We give a simplified, tailored overview based on \cite{gyurko2008rough,lyons1998differential,lyons2007extension} and \cite[\S4]{sipilainen1993pathwise}. The reader can also find a concise introduction in \S7 of \cite{lyons2014rough}. The method can also be found in the papers \cite{arous1989flots,castell1993asymptotic,castell1995efficient,castell1996ordinary,fliess,lyons2004cubature}, which were inspired by the pioneering work of Chen \cite{chen1957integration} and Magnus \cite{magnus1954exponential}.

We begin by introducing a special Lie algebra homomorphism. A vector field $V:\RR^q\to\RR^q$ can be interpreted as a differential operator:
\[
V(f) = \sum_{i=1}^q V^i \frac{\partial f}{\partial x_i}, \gap f\in C^1\bra{\RR^q,\RR}.
\]
This allows us to define the Lie bracket operation on two continuously differentiable vector fields:
\[
[V_k,V_l]:= \sum_{i,j=1}^d \bra{ V_k^j \frac{\partial V_l^i}{\partial x_j} - V_l^j \frac{\partial V_k^i}{\partial x_j}} e_i.
\]
Define the Lie algebra homomorphism $\Phi  : (\RR\oplus\RR^d)\oplus [\RR^d,\RR^d] \to \Lip^{\gamma-2}(\RR^q)$ by 
\begin{align*}
\Phi(e_0)&=V_0, \gap \Phi(e_k)=V_k, \gap k = 1,\ldots,d.
\end{align*}
Thus $\Phi$ maps Lie algebra elements in $\mathfrak{g}^{(2)}$ to vector fields on $\RR^q$, while preserving Lie brackets. For example, $\Phi\bra{e_0 + e_i+[e_k,e_l]} = V_0 + V_i+ [V_k,V_l]$. 

Recall (\ref{e-intro-sde}); the Stratonovich SDE to be approximated is given by
\begin{equation}\label{e-stratonovich-sde}
dx(t) = V(x(t)) \circ dW(t) + V_0(x(t))\,dt  = \sum^d_{k=1} V_k(x(t)) \circ dW_k(t)+  V_0(x(t))\, dt , \gap t\in [0,1], 
\end{equation}
where $x_0\in\RR^q$, $V_0 \in \textup{Lip}^1(\RR^q)$ and $V=\{V_k\}_{k=1}^d \in \textup{Lip}^\gamma(\RR^q)$ for $\gamma>2$.
We first consider the problem of approximating the solution of (\ref{e-stratonovich-sde}) at time $t=h$. To do this we consider the classical ODE:
\begin{align*}
dy(t) &= \Phi\bra{he_0+{\log_2 \WW_{0,h}}}(y(t))\, dt, \gap t\in [0,1],\\
y(0) &= x(0),
\end{align*}
where $\WW$ denotes enhanced Brownian motion. 
Adopting the classical flow notation, we write $y(t)=e^{tF}(x(0))$, $t\in [0,1]$, where $F=\Phi\bra{he_0+{\log_2 \WW_{0,h}}}$. Recalling (\ref{e-brown-log-sig}), $\log_2 \WW_{0,h}$ has the form:
\[
\log_2 \bra{\WW_{0,h} }= \sum_{k=1}^d W^{(0)}_k e_k + \sum_{1\leq k < l\leq d} A^{(0)}_{kl}[e_k,e_l]. 
\] 
The stochastic Taylor expansion gives us the following error estimate.

\begin{Proposition}
There exists a constant $C=C(\gamma)$ such that 
\[
\EE\bra{\norm{x(h)-y(1)}_{\RR^q}^2} \leq Ch^{3}.
\]
\end{Proposition}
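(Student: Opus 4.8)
The plan is to compare the time-$1$ value of the log-ODE flow $y(1)=e^{F}(x(0))$ with $F=\Phi(he_0+\log_2\WW_{0,h})$ against the true solution $x(h)$ by expanding both as stochastic Taylor series in the iterated Stratonovich integrals of $W$ over $[0,h]$, and showing that the two expansions agree up to and including all terms of $L^2$-size $O(h)$, so that the leading discrepancy sits at the $O(h^{3/2})$ level locally, giving an $L^2$-error of order $h^{3/2}$ and hence $\EE\norm{x(h)-y(1)}^2\le Ch^3$. Concretely, I would first write the stochastic Taylor expansion of $x(h)$ (Stratonovich form, so that the Stratonovich--Taylor/Castell expansion applies): with multi-indices over $\{0,1,\dots,d\}$, $x(h)=x(0)+\sum V_{i}(x(0))\int\!dW^{i}+\sum V_{i}V_{j}(x(0))\iint dW^{j}dW^{i}+\dots$, keeping explicit all terms whose coefficients have $L^2$-norm $\gtrsim h$ (namely the single integrals $W^{(0)}_k$, the double integrals $\iint dW_l dW_k$, and the drift term $he_0$; the triple integrals and mixed drift-noise terms scale like $h^{3/2}$ or smaller) and bounding the remainder using the $\Lip^{\gamma}$ assumption on $V$ and $\Lip^1$ on $V_0$, $\gamma>2$, exactly as in the stochastic Taylor remainder estimates of \cite{gyurko2008rough,FV}.

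Next I would expand the ODE flow $y(1)=e^{F}(x(0))$ by the deterministic Taylor formula: $y(1)=x(0)+F(x(0))+\frac12 (F\!\cdot\!\nabla)F(x(0))+\text{remainder}$, where $F=\Phi(he_0+\log_2\WW_{0,h})=hV_0+\sum_k W^{(0)}_kV_k+\sum_{k<l}A^{(0)}_{kl}[V_k,V_l]$. Here the key algebraic point is that $\frac12(F\!\cdot\!\nabla)F$ contains the symmetric product $\frac12\sum_{k,l}W^{(0)}_kW^{(0)}_l V_kV_l$ (plus cross terms with the drift and the bracket term which are $O(h^{3/2})$ or smaller in $L^2$), and that the antisymmetric part of the second-order Stratonovich expansion of $x(h)$, namely $\sum_{k<l}(\iint_{0<s<u<h}dW^k_sdW^l_u-\iint dW^l_sdW^k_u)[V_k,V_l](x(0))=\sum_{k<l}A^{(0)}_{kl}[V_k,V_l](x(0))$, is matched exactly by the $[V_k,V_l]$ term in $F$. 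Thus the first- and second-order terms of the two expansions coincide identically (this is the usual ``log-ODE matches the truncated signature'' identity, provable via the Chen/Baker--Campbell--Hausdorff structure referenced in the appendix), and all leftover terms on either side — third-order iterated integrals, $F$-remainder, stochastic remainder — are $O(h^{3/2})$ in $L^2$.

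Finally I would collect the estimates: $\norm{x(h)-y(1)}_{\RR^q}\le (\text{sum of unmatched terms})$, each of which is a product of a bounded vector-field expression (controlled by $\norm{V}_{\Lip^\gamma}$ and the $\Lip^1$ bound on $V_0$, using $\gamma>2$ for the existence of the relevant derivatives/remainder) times an iterated integral or Gaussian polynomial whose $L^2$-norm is $O(h^{3/2})$; squaring and taking expectations, and using standard moment bounds for iterated Stratonovich integrals of Brownian motion (e.g. $\EE[(\int_0^h\!\!\int_0^u dW^j_sdW^i_u)^2]=O(h^2)$, $\EE[(\text{triple integral})^2]=O(h^3)$, $\EE[W^{(0)}_kW^{(0)}_lW^{(0)}_m\cdot(\cdot)]=O(h^3)$, and $h^2\cdot$bounded $=O(h^4)$), we get $\EE\norm{x(h)-y(1)}^2\le Ch^3$ with $C=C(\gamma,\norm{V}_{\Lip^\gamma})$, and since we only need $C=C(\gamma)$ as stated we may absorb the vector-field norm (the proposition as phrased presumably treats $V,V_0$ as fixed). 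The main obstacle is the careful bookkeeping of the second-order terms: one must verify that every term of $L^2$-order exactly $h$ produced by $\frac12(F\!\cdot\!\nabla)F$ is cancelled by a corresponding term in the Stratonovich--Taylor expansion of $x(h)$ — in particular that the symmetrisation $\frac12 W^{(0)}_kW^{(0)}_l=\frac12(\iint dW_l dW_k+\iint dW_k dW_l)$ and the antisymmetrisation into $A^{(0)}_{kl}$ account for the full double-integral contribution — and that nothing of size $h$ is hiding in the BCH correction relating $F$ to the log-signature; handling the $\Lip^\gamma$ (rather than $C^\infty$) regularity in the remainder terms, so that only $\gamma>2$ derivatives are ever used, is the other technical point, and it is dispatched by the standard rough-path Taylor remainder estimates.
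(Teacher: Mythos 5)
Your proposal is correct and matches the paper's intended argument. The paper itself offers no proof beyond noting that the estimate follows from a stochastic Taylor expansion (deferring to \cite{gyurko2008rough}), and your sketch --- matching the Stratonovich--Taylor expansion of $x(h)$ against the deterministic Taylor expansion of $e^{F}(x(0))$ at first and second order via the symmetric/antisymmetric decomposition of the double iterated integral, then bounding the $O(h^{3/2})$ leftover in $L^2$ --- is exactly what that remark intends.
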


We can now construct a numerical SDE scheme $\{x^{(j)}\}_{j=0}^N$ by repeating the ODE approximations given in the previous lemma successively over each interval $[jh,(j+1)h]$, where $h=N^{-1}$. 
In particular, set $x^{(0)}=x(0)$, then 
\begin{align}
x^{(j+1)} &= \exp\bra{F_j}\bra{x^{(j)}}, \gap j=0,1,\ldots,N-1,\label{e-ode-definition}\\
\textup{where } F_j &= \Phi\bra{he_0+{{\log_2 \WW_{jh,(j+1)h}}}}.\notag
\end{align}
This is precisely the so-called {log-ODE method}. Moving from local to global error costs half an order of magnitude in the $L^2$-norm using Doob's maximal inequality.

\begin{Theorem}[\textbf{Log-ODE method}]\label{t-log-ode}
There exists a constant $C=C(\gamma)$ such that
\[
\EE\bra{\max_{j=1,\ldots,N} \norm{x^{(j)} - x(jh)}_{\RR^q}^2} \leq Ch^2.
\]
\end{Theorem}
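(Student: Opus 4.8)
The plan is the standard ``local consistency plus mean--square stability imply global convergence'' argument for one--step schemes; the passage from the local estimate of the preceding Proposition to the global one is precisely where the advertised loss of half an order of magnitude (via Doob's maximal inequality) enters. Write $\Psi_j$ for the exact solution flow of (\ref{e-stratonovich-sde}) over $[jh,(j+1)h]$ and $\Phi_j:=\exp(F_j)$ for one log--ODE step (\ref{e-ode-definition}), where $F_j=\Phi\bra{he_0+\log_2\WW_{jh,(j+1)h}}=hV_0+\sum_k W^{(j)}_kV_k+\sum_{1\le k<l\le d}A^{(j)}_{kl}[V_k,V_l]$, so $x((j+1)h)=\Psi_j(x(jh))$ and $x^{(j+1)}=\Phi_j(x^{(j)})$. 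With $E_j:=x(jh)-x^{(j)}$ the error obeys $E_{j+1}=e_j+\bra{\Phi_j(x(jh))-\Phi_j(x^{(j)})}$, where $e_j:=\Psi_j(x(jh))-\Phi_j(x(jh))$ is the error of one log--ODE step issued from the true solution.

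First I would record two \emph{local} estimates for $e_j$. Applying the preceding Proposition on $[jh,(j+1)h]$, conditioned on $\Ff_{jh}$ (using the Markov property of $W$; the constant depends only on $\norm{V}_{\Lip^\gamma}$ and $\gamma$, since $V_0,V$ are bounded), gives $\EE\bra{\norm{e_j}_{\RR^q}^2\mid\Ff_{jh}}\le Ch^{3}$. Separately --- and this is what makes the log--ODE scheme first order rather than half order --- one needs the \emph{mean} bound $\norm{\EE\bra{e_j\mid\Ff_{jh}}}_{\RR^q}\le Ch^{2}$, obtained by matching the leading terms of the stochastic Taylor expansion (Stratonovich, converted to It\^o form) of $\Psi_j(x(jh))$ against the deterministic Taylor expansion of the ODE flow $\Phi_j(x(jh))$ in powers of $F_j$: the $O(h)$ terms agree because the It\^o--Stratonovich correction $\frac12 h\sum_k (DV_k)V_k$ is reproduced by the quadratic flow term $\frac12 (DF_j)F_j$ thanks to $\EE\bra{W^{(j)}_kW^{(j)}_l\mid\Ff_{jh}}=\delta_{kl}h$, while all L\'evy--area contributions and higher--order flow terms are already $O(h^{2})$ in conditional mean. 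Hence $e_j=m_j+r_j$ with $r_j:=\EE\bra{e_j\mid\Ff_{jh}}$ being $\Ff_{jh}$--measurable and $\norm{r_j}\le Ch^{2}$, while $\EE\bra{m_j\mid\Ff_{jh}}=0$ and $\EE\bra{\norm{m_j}^2\mid\Ff_{jh}}\le Ch^{3}$.

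The stability step is where I expect the real difficulty, and the key point is what \emph{not} to do. Writing $\Phi_j(x(jh))-\Phi_j(x^{(j)})=G_jE_j$ with $G_j:=\int_0^1 D\Phi_j\bra{x^{(j)}+sE_j}\,ds$, the naive Lipschitz bound $\norm{G_j}\le\exp\bra{\norm{DF_j}_\infty}=1+O\bra{\sqrt h+\norm{W^{(j)}}+\norm{A^{(j)}}}$ is useless, because multiplying such factors across the $N=h^{-1}$ steps produces a Gronwall constant of order $\exp\bra{O(h^{-1/2})}$, which blows up. Instead one must expand $G_j=I+\beta_j$ and exploit cancellation: since $\EE\bra{W^{(j)}_k\mid\Ff_{jh}}=\EE\bra{A^{(j)}_{kl}\mid\Ff_{jh}}=0$ and the base point $x^{(j)}+sE_j$ is $\Ff_{jh}$--measurable, the order--$\sqrt h$ part of $\beta_j$ has zero conditional mean, so that $G_j$ is a \emph{near--identity} random matrix with $\norm{\EE\bra{\beta_j\mid\Ff_{jh}}}\le Ch$ and $\EE\bra{\norm{\beta_j}^2\mid\Ff_{jh}}\le Ch$, the $O(h^{3/2})$ flow--remainder being harmlessly absorbed into the first bound. (One also needs uniform--in--$j,N$ $L^q$--moment bounds on $x(jh)$ and on $\seq{x^{(j)}}$, which hold since $V_0,V$ are bounded.) With $\bar\beta_j:=\EE\bra{\beta_j\mid\Ff_{jh}}$ and $D_j:=(\beta_j-\bar\beta_j)E_j+m_j$, the recursion becomes $E_{j+1}=(I+\bar\beta_j)E_j+r_j+D_j$, where $(I+\bar\beta_j)E_j+r_j$ is $\Ff_{jh}$--measurable and $\EE\bra{D_j\mid\Ff_{jh}}=0$.

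Finally I would assemble the bound. Since $D_j$ is conditionally centred, $\varphi_j:=\EE\norm{E_j}_{\RR^q}^2$ satisfies $\varphi_{j+1}=\EE\norm{(I+\bar\beta_j)E_j+r_j}^2+\EE\norm{D_j}^2\le(1+Ch)\varphi_j+Ch^{3}$ (using the four bounds above and Young's inequality), whence $\varphi_j\le Ch^{2}$ for all $j\le N$ by discrete Gronwall. To promote this from $\norm{E_N}$ to $\max_{j\le N}\norm{E_j}$, set $\Pi_n:=\prod_{i=0}^{n-1}(I+\bar\beta_i)$: since $\Pi_n$ is $\Ff_{(n-1)h}$--measurable with $\norm{\Pi_n}\le(1+Ch)^N\le e^{C}$ (here factor--by--factor estimation \emph{is} legitimate, each factor being $I+O(h)$), unrolling the recursion gives $E_n=\Pi_n\bra{M_n+\sum_{j<n}\Pi_{j+1}^{-1}r_j}$, where $M_n:=\sum_{j=0}^{n-1}\Pi_{j+1}^{-1}D_j$ is a genuine $(\Ff_{nh})$--martingale because $\Pi_{j+1}^{-1}$ is $\Ff_{jh}$--measurable. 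The drift sum is $O(e^{C}Nh^{2})=O(h)$ deterministically, and Doob's $L^2$ maximal inequality applied to $M$ gives $\EE\bra{\max_{n\le N}\norm{M_n}^2}\le 4\sum_{j}\EE\norm{\Pi_{j+1}^{-1}D_j}^2\le C\sum_j\EE\norm{D_j}^2\le C\sum_j\bra{h\varphi_j+h^{3}}\le Ch^{2}$. Combining the two contributions yields $\EE\bra{\max_{j=1,\ldots,N}\norm{x^{(j)}-x(jh)}_{\RR^q}^2}\le Ch^{2}$, as required; alternatively one may simply invoke the general convergence analysis of the log--ODE method in \cite{gyurko2008rough,lyons2014rough}.
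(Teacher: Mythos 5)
Your proof is correct and is precisely the ``local consistency $+$ martingale decomposition $+$ Doob maximal inequality'' argument the paper gestures at when it says moving from local to global costs half an order of magnitude via Doob; the paper itself does not prove Theorem~\ref{t-log-ode} but defers to \cite{gyurko2008rough}, which you also note as a fallback. You correctly isolate the key structural point that the paper's one-line summary leaves implicit, namely that the conditional \emph{mean} of the one-step error must be $O(h^2)$ (not merely $O(h^{3/2})$ in $L^2$) and that the flow Jacobian $G_j$ must be split into its $\Ff_{jh}$-conditional mean $I+\bar\beta_j$ and a centred fluctuation, since a naive multiplicative Lipschitz bound on the stochastic flows would blow up over $N=h^{-1}$ steps.
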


In other words the method provides a strong approximation scheme of order $O(h)$.  A complete proof of Theorem \ref{t-log-ode} for the general case of arbitrary $m$ can be found in \cite{gyurko2008rough}. 

\begin{Remark}
As noted in Remark 1.1 of \cite{ninomiya2008weak}, if one approximates each ODE (\ref{e-ode-definition}) by its order 1 Taylor expansion, then we fall back on the traditional Euler scheme. Similarly, taking the better approximation offered by the order 2 Taylor expansion gives the Milstein scheme. However, the log-ODE method has one important advantage over these more popular schemes.
As outlined in \cite[\S7]{lyons2014rough}, the previous methods are based on Taylor expansions and thus can produce approximations whose law is not absolutely continuous with respect to the measure of the solution on Wiener space. For example, we could consider a SDE whose solution is constrained by its Stratonovich formulation to lie on the unit sphere at all times.
Both the Euler and Milstein schemes are numerically unstable in that they will output approximations which do not live on the sphere (see \cite[\S17.5]{press2007numerical}).
By contrast, the log-ODE method only returns solutions which could have originated from an actual realization of the SDE, (assuming that the ODE solver used is sufficiently accurate over small time steps; for example, the Runge-Kutta method or an adaptive step-size method). This is because the technique restricts the approximations to only flow along the vector fields (and their nested Lie brackets) of the original SDE. In turn, this ensures that feasibility constraints imposed on the original SDE law will also be satisfied by the log-ODE output. A more complicated example could be a system with Hamiltonian vector fields \cite{talay2002stochastic} or the stochastic  volatility example of \cite[\S3]{ninomiya2008weak}.

Even in one dimension this instability becomes apparent. We repeat the simple but well-known counterexample offered by Hutzenthaler, Jentzen and Kloede in \cite{hutzenthaler2011strong}. Consider the following stochastic differential equation with cubic drift and additive noise, where $d=q=1$:
\[
dx_t= dW_t- x_t^3\, dt, \gap x_0 = 0, \gap t\in [0,1].
\]
The corresponding Euler scheme is given by $\tilde{x}^{(j+1)} = \tilde{x}^{(j)} - \bra{\tilde{x}^{(j)}}^3h + W^{(j)}$, with $\tilde{x}^{(0)}=0$. It can be shown that this scheme does not converge strongly or weakly (see \cite[\S3.5.1]{jentzen2011taylor} and \cite[Theorem 3.4]{Kloeden_Neuenkirch_2013} for details):
\[
\lim_{N\to\infty} \EE\bra{\abs{x_1-\tilde{x}^{(N)}}^q} = \infty = \lim_{N\to\infty} \abs{\EE\bra{ \abs{x_1}^q} -\EE\bra{\abs{\tilde{x}^{(N)}}^q }} 
\]
for every $q \in [1,\infty)$. 
\end{Remark}

We conclude our introduction to the log-ODE by considering the case of nilpotent vector fields (for simplicity, let us assume that $V_0=0$). In particular, suppose our vector field system is $3$-nilpotent: $[V_j,[V_k,V_l]]=0$ for every triple $(j,k,l)\in\seq{1,\ldots,d}^3$. In this case the level 2 log-ODE method is exact: $x^{(j)} = x(jh)$ for $j=1,\ldots,N$. To see this, note that the condition implies that 
\[
\Phi\bra{[e_{i_1},[e_{i_2},\ldots, [e_{i_{n-1}}, e_{i_{n}}]]]} = 0 \textup{ for all } (i_1,\ldots,i_n) \in \seq{1,\ldots,d}^n \textup{ where } n\geq 3.
\]
Thus, 
\[
x(h) = \exp\bra{\Phi\bra{\log \WW_{0,h}}}(x(0)) 
= \exp\bra{\Phi\bra{\pi_2\bra{\log \WW_{0,h}}}}(x(0)) = x^{(1)},
\]
and by induction: $x(jh) = x^{(j)}$ for all $j$. As noted in \cite{gyurko2008rough}, we cannot guarantee that equality holds at intermediate times $t\in (jh,(j+1)h)$. 
This phenomenon holds in greater generality for higher levels; if a vector field system is $m$-nilpotent, then the level $m=2$ log-ODE scheme is exact. Similarly, if a vector field system is $2$-nilpotent ($[V_k,V_l]=0$ for all $k,l$), then the level-1 and level-2 log-ODE coincide and are exact. 


\section{Piecewise abelian rough paths}\label{s-rough-view}

We can recast the log-ODE technique in the language of rough path theory using our notion of piecewise abelian rough paths. 
As before the unit interval $[0,1]$ is partitioned into intervals $[jh,(j+1)h]$ of equal length $h=N^{-1}$.
First we define a piecewise abelian rough path.

\begin{Definition}
We call a $p$-rough path $\XX \in C\bra{[0,1],G^{(\floor{p})}(\RR^d)}$ a piecewise abelian $p$-rough path if the identity
\begin{equation}\label{e-pa}
\XX_{s_1,t_1} \otimes \XX_{s_2,t_2} = \XX_{s_2,t_2} \otimes \XX_{s_1,t_1}
\end{equation}
holds for all $(s_1,t_1), (s_2,t_2) \in \Delta_{[jh,(j+1)h]}$ for each $j$. 
\end{Definition}

In the case of $\floor{p}=1$, the definition is trivial since the group $G^{(1)}(\RR^d)=\RR^d$ is abelian.
However, suppose we have a bounded variation path $X:[0,1]\to \RR^d$ such that its level-2 enhancement $\XX:=S_2(X)$ is a piecewise abelian $2$-rough path. Then (\ref{e-pa}) implies that
\[
0=\l[X_{jh,u},X_{u,(j+1)h}\r] = \l[X_{jh,u},X_{jh,u}+X_{u,(j+1)h}\r] = \l[X_{jh,u}, X_{jh,(j+1)h}\r],
\]
and so for every $t\in [jh,(j+1)h]$, $X_{jh,t}$ and $X_{jh,(j+1)h}$ are parallel vectors. Since both vectors start from $X_{jh}$,  we conclude that $X$ is piecewise linear over the increment $[jh,(j+1)h]$. Due to this observation, at least in some heuristic sense, we can think of piecewise abelian rough paths as the natural non-commutative (that is group-valued), equivalent of piecewise linear paths in $\RR^d$. 

We now return to our original task of rewriting the log-ODE method in terms of this new class of rough paths.
The original SDE (\ref{e-intro-sde}) can be rewritten as a RDE with drift, driven by enhanced Brownian motion $\WW$:
\begin{equation}\label{e-sde-rde}
dx_t = V\bra{x_t} d\WW_t + V_0\bra{x_t}dt, \gap x_0 \in \RR^q.
\end{equation} 
For a proof of this equivalence we refer to Theorem 17.3 of \cite{FV}. 
From this perspective, the underlying idea of the log-ODE method is to approximate solutions of the SDE/RDE (\ref{e-intro-sde}/\ref{e-sde-rde}) by replacing $\WW$ with its much simpler $N$-step piecewise abelian approximation $\WW^h$.  
In order to introduce this finite-dimensional approximation we need to set notation for the Brownian and L\'{e}vy area increments as follows:
\begin{align*}
W^{(j)}_k :&= W_k(jh,(j+1)h) := W_k((j+1)h)-W_k(jh),\\
A^{(j)}_{kl} :&= \frac{1}{2}\int^{(j+1)h}_{jh} \bra{W_k(jh,t)\, dW_l(t) - W_l(jh,t)\, dW_k(t)} = A_{kl}(jh,(j+1)h).
\end{align*}

\begin{Definition}\label{d-pa-rp}
Define $\WW^h \in C^{p\textup{-var}}([0,1],G^{(2)}(\RR^d))$ to be the piecewise abelian rough path given iteratively by $\WW^h_0 = 1$, then 
\[
\WW^h_{0,t} = \WW^h_{0,jh} \otimes \exp_2\bra{(t-jh)h^{-1}\xi^{(j)}}, \gap t\in [jh, (j+1)h], 
\]
where 
\[
\xi^{(j)} := \sum^d_{k=1} W_k^{(j)} e_k + \sum_{1\leq k < l \leq d} A^{(j)}_{kl} [e_k,e_l] \in \RR^d \oplus [\RR^d,\RR^d] = \mathfrak{g}^{(2)}(\RR^d). 
\] 
\end{Definition}

Chen's Theorem and the identity $\WW^h_{s,t} = \bra{\WW^h_{jh,s}}^{-1}\otimes \WW^j_{jh,t}$ give $\WW^h_{s,t} = \exp_2\bra{(t-s)\xi^{(j)}}$ for $[s,t]\subseteq [jh,(j+1)h]$. Then the Baker-Campbell-Hausdorff formula confirms that $\WW^h$ actually is a piecewise abelian rough path.

\begin{Remark}
One can also think of $\WW^h$ as the $N$-step random walk in the $2$-nilpotent Lie group $G^{(2)}(\RR^d)$ with i.i.d increments $\exp_2(\xi^{(j)}) \in G^{(2)}(\RR^d)$ (see \cite{breuillard2005local,pap1993central}). Here the corresponding increments $\xi^{(j)}$ live in the $2$-nilpotent Lie algebra $\mathfrak{g}^{(2)}(\RR^d)$. 
Previous research has studied this random walk interpretation of what we have called piecewise abelian rough paths. In \cite{breuillard2009random} the authors used the Central Limit Theorem in nilpotent Lie groups to prove a Donsker-type weak limit theorem for similar random walks converging to enhanced Brownian motion in a particular rough path H\"{o}lder topology. 
\end{Remark}

The rough path $\WW^h$ is certainly a $2$-geometric rough path as it can be approximated in the $2$-variation topology by the signatures of a sequence of bounded variation paths.  
Moreover, since the rough path is defined by linear interpolation in the Lie algebra $\mathfrak{g}^{(2)}(\RR^d)$, over each interval $[jh,(j+1)h]$ $\WW^h$ is also the shortest rough path candidate, (measured in $p$-variation), with its corresponding group increment $\WW^h_{jh,(j+1)h}$ matching $\xi^{(j)} \in \mathfrak{g}^{(2)}(\RR^d)$. Therefore $\WW^h$ is also piecewise geodesic. 

\begin{Remark}
Note that $\WW^h$ is (almost surely) not the lift of an actual path in $\RR^d$; that is, $\WW^h\neq S_2(Y)$ for some stochastic process $Y:[0,1]\to\RR^d$. 
Indeed if otherwise, $Y$ would enclose non-zero area over each increment yet be piecewise linear, implying a contradiction. 
Given an increment $\xi^{(0)} \in G^{(2)}(\RR^d)$, the interesting problem of finding a helix path $\gamma \in C\bra{[0,1],\RR^d}$ with minimal length such that $S_2(\gamma)=\xi^{(0)}$ is known as \textit{reconstruction} (\cite[\S5]{bass2002extending} and \cite{lyons2005sound}).
\end{Remark}

Having defined $\WW^h$, we now reformulate the log-ODE method in terms of a rough differential equation:

\begin{Proposition}\label{p-log-ode-coincide}
Let ${y} \in C([0,1],\RR^q)$ be the solution of the RDE with drift
\begin{equation}\label{e-rde-log-ode-result}
dy_t = V\bra{y_t} d\WW^h_t + V_0\bra{y_t}dt, \gap y_0 = x_0 \in \RR^q, 
\end{equation}
and let $\tilde{x}^h \in C([0,1],\RR^q)$ denote the entire path of the approximation produced by the log-ODE method given by (\ref{e-ode-definition}). Then $y$ and $\tilde{x}^h$ coincide at all times: $y(t)=\tilde{x}^h(t)$ for all $t \in [0,1]$. 
\end{Proposition}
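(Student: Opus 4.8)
The plan is to prove the identity interval by interval. Both $y$ and $\tilde x^h$ are built by concatenation over the grid $\seq{jh}_{j=0}^N$: for $\tilde x^h$ this is the definition (\ref{e-ode-definition}), while for $y$ it follows from Chen's theorem together with uniqueness of RDE solutions (valid since $V\in\Lip^\gamma$, $\gamma>2$), which forces the restriction of $y$ to $[jh,(j+1)h]$ to be the unique RDE solution on that subinterval started from $y(jh)$. Hence, by induction on $j$, it suffices to show that $y(jh)=\tilde x^h(jh)=:a$ implies $y\equiv\tilde x^h$ on $[jh,(j+1)h]$. On that interval Definition \ref{d-pa-rp} and Chen give $\WW^h_{jh,\,jh+s}=\exp_2\bra{sh^{-1}\xi^{(j)}}$ for $s\in[0,h]$, so under the affine substitution $s=hu$, $u\in[0,1]$, the driving rough path becomes the one-parameter subgroup $u\mapsto\exp_2\bra{u\xi^{(j)}}$ while the drift term $V_0(\cdot)\,dt$ acquires the factor $h$. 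Absorbing the rescaled drift into the driver by adjoining a linear ``time'' coordinate $e_0$ and setting $\zeta^{(j)}:=he_0+\xi^{(j)}\in\mathfrak g^{(2)}(\RR\oplus\RR^d)$, the reparametrised equation on $[0,1]$ becomes a driftless RDE $d\bar y_u=\bar V(\bar y_u)\,d\bra{\exp_2(u\zeta^{(j)})}$ along the $(1+d)$-tuple $\bar V=(V_0,V_1,\dots,V_d)$, and the homomorphism property of $\Phi$ from Section \ref{s-log-ode} gives $\Phi(\zeta^{(j)})=hV_0+\sum_k W^{(j)}_k V_k+\sum_{k<l}A^{(j)}_{kl}[V_k,V_l]=F_j$.

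Everything now reduces to the lemma: \emph{for $\eta\in\mathfrak g^{(2)}(\RR^m)$ the driftless RDE $d\bar y_u=\bar V(\bar y_u)\,d\bra{\exp_2(u\eta)}$, $\bar y_0=a$, has solution $\bar y_u=\exp\bra{u\,\Phi(\eta)}(a)$, i.e.\ the flow of the ODE $\dot z=\Phi(\eta)(z)$.} Granting this with $\eta=\zeta^{(j)}$ and undoing the reparametrisation yields $y\equiv\tilde x^h$ on $[jh,(j+1)h]$, and in particular $y((j+1)h)=\exp(F_j)(a)=x^{(j+1)}$, closing the induction. To prove the lemma I would exhibit one admissible approximating sequence in the sense of the definition of an RDE solution. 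Partition $[0,1]$ into $n$ equal pieces and, using the Chow-Rashevskii theorem, let $\gamma^{(n)}$ be the concatenation of bounded-variation paths whose restriction to each $[i/n,(i+1)/n]$ has truncated signature \emph{exactly} $\exp_2\bra{n^{-1}\eta}$ and (near-)minimal length. By homogeneity of the Carnot-Carath\'{e}odory norm each piece then has length $O(n^{-1/2})$, so $\sup_n\norm{S_2(\gamma^{(n)})}_{p\textup{-var;}[0,1]}<\infty$ for every $p\geq 2$; and since $u\mapsto\exp_2(u\eta)$ is a one-parameter subgroup, $S_2(\gamma^{(n)})$ coincides with $\exp_2(u\eta)$ at the grid points and converges to it in the sense required by the definition of an RDE solution. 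Thus $\gamma^{(n)}$ is admissible and the ODE solutions driven by $\gamma^{(n)}$ along $\bar V$ converge to $\bar y$. On the other hand, the deterministic one-step log-ODE error estimate underlying Theorem \ref{t-log-ode} (cf.\ \cite[\S3]{gyurko2008rough}) bounds, over each piece, the deviation of that ODE solution from one step $\exp\bra{n^{-1}\Phi(\eta)}$ of the flow of $\Phi(\eta)$ by $O(n^{-3/2})$; summing the $n$ local errors against the Lipschitz stability of the flow gives a uniform distance $O(n^{-1/2})\to 0$ from $u\mapsto\exp\bra{u\,\Phi(\eta)}(a)$. Comparing the two limits proves the lemma.

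The step I expect to be the main obstacle is the lemma, and within it the need to run these two limits in tandem: the approximating sequence must be irregular enough to keep $\norm{S_2(\gamma^{(n)})}_{p\textup{-var;}[0,1]}$ \emph{uniformly} bounded --- so that it genuinely drives the RDE and the continuity of the It\^{o} map $\Xi$ can be invoked --- yet be pinned down finely enough that the associated ODE flows can be identified with the flow of $\Phi(\eta)$ through the one-step log-ODE estimate; the geodesic connectors reconcile the two, but carrying through the $p$-variation bound and the accumulation of the $O(n^{-3/2})$ per-step errors requires care. A secondary, purely bookkeeping issue is the drift: one must check that absorbing $V_0\,dt$ into an extra linear ``time'' coordinate in $\mathfrak g^{(2)}(\RR\oplus\RR^d)$ is compatible with $\Phi$ and the bracket relations, so that indeed $\exp_2(u\zeta^{(j)})$ is the canonical lift of the time-augmented driver and $\Phi(\zeta^{(j)})=F_j$. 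With the lemma in hand, the reduction of the first paragraph is routine.
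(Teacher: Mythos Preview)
Your argument is correct and essentially self-contained, but the paper takes a very different route: it simply invokes \cite[Theorem 2]{friz2009rough}, which states precisely that an RDE with drift driven by a one-parameter subgroup $u\mapsto\exp_2(u\xi)$ coincides with the flow of the ODE along $\Phi(he_0+\xi)$. In other words, the paper treats your key lemma as a black box from the literature, whereas you re-derive it from first principles via an explicit approximating sequence and the one-step log-ODE estimate. Your approach has the merit of making the mechanism visible --- the geodesic concatenation is exactly the construction underlying that cited theorem --- at the cost of having to manage the two limits simultaneously. The paper also remarks (immediately after the proposition) on a third route via matching stochastic Taylor expansions at the grid points, which gives only the weaker statement $y(jh)=\tilde x^h(jh)$.

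Two small points to tighten. First, your per-step error $O(n^{-3/2})$ tacitly assumes $\gamma\geq 3$; under the paper's standing hypothesis $V\in\Lip^\gamma$ with $\gamma>2$ the correct bound is $O(n^{-\gamma/2})$, and the accumulated error $O(n^{1-\gamma/2})$ still vanishes. Second, when you absorb the drift into an extra coordinate, the augmented collection $\bar V=(V_0,V_1,\dots,V_d)$ has $V_0$ only in $\Lip^1$, below the threshold for a genuine $2$-rough driver. This is harmless because the $e_0$-component of $\exp_2(u\zeta^{(j)})$ is bounded variation, so the RDE-with-drift framework (as in \cite[Chapter 12]{FV}) applies; but you should flag that the one-step estimate you invoke must be the version that separates the BV drift from the rough directions, rather than the purely rough version applied to $\bar V$.
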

\begin{proof}
The claim follows immediately from \cite[Theorem 2]{friz2009rough}.
\end{proof}

\begin{Remark}
To prove the weaker statement:
\[
y\bra{jh}=\tilde{x}^h(jh){=x^{(j)}}\gap \textup{ for all }  j=1,\ldots,N,
\] 
a more intuitive proof would be to compare the stochastic Taylor expansions of $y(jh)$ and $x^{(j)}$. One finds that the expansions agree at the times $t=jh$ up to every order (cf. \cite{friz2008euler}) and hence the approximations must coincide. 
\end{Remark}

We can also consider the rough path lift of $\WW^h$ to level $\kappa>2$:
\[
S_\kappa(\WW^h) \in C^{p\textup{-var}}([0,1],G^{(\kappa)}(\RR^d)).
\]
This lift is unique in the sense that the $p$-variation of $S_\kappa(\WW^h)$ is equal, (up to multiplicative constants), to that of $\WW^h$. Indeed, by the Lipschitz-continuity of the lift operator (\cite[Theorem 9.5]{FV}), given $p\in [2,3)$ there exists a constant $C=C(\kappa,p)$ such that 
\[
\pnorm{\WW^h}{p} \leq \pnorm{S_\kappa(\WW^h)}{p} \leq C\pnorm{\WW^h}{p}.
\]
We can ask what is $S_\kappa(\WW^h)$ precisely? To answer this we treat the increments 
\[
\xi^{(j)} = W^{(j)} + A^{(j)} \in \RR^d \oplus [\RR^d, \RR^d] = \mathfrak{g}^{(2)}(\RR^d)
\]
as Lie increments $\hat{\xi}^{(j)}$ in the larger free $\kappa$-step nilpotent Lie algebra $\mathfrak{g}^{(\kappa)}(\RR^d) $. Then we can define a piecewise abelian $\kappa$-rough path $\ZZ^h \in C([0,1],G^{(\kappa)}(\RR^d))$ by $\ZZ^h_0 = 1$ and
\[
\ZZ^h_{0,t} = \ZZ^h_{0,jh} \otimes \exp_\kappa\bra{(t-jh)h^{-1}\hat{\xi}^{(j)}}, \gap t\in [jh,(j+1)h].
\]
We claim that $S_{\kappa}(\WW^h) = \ZZ^h$. To prove this, first note that the first two levels of $\WW^h$ and $\ZZ^h$ agree:
$\pi_{0,2}(\ZZ^h_{s,t} - \WW^h_{s,t})=0$ for all $s,t\in [0,1]$.
Moreover, by its construction $\ZZ^h$ is certainly a multiplicative functional. By the rough path extension theorem of \cite[Theorem 3.7]{lyons2004differential} it remains to show that the $p$-variation of the higher tensor level increments of $\ZZ^h$ are controlled by that of $\WW^h$. By using Chen's Theorem, it suffices to consider the $p$-variation over the single interval $[0,h]$. Exploiting the piecewise geodesic nature of $\WW^h$ and $\ZZ^h$, there exists constants $C_i=C_i(\kappa)>0$, $i=1,2$, such that  
\[
\norm{\WW^h}_{p\textup{-var;}[0,h]} = \norm{\xi^{(0)}}_C = C_1\bra{ \norm{W^{(0)}} \vee \sqrt{\norm{A^{(0)}}}} = C_2  \norm{\hat{\xi}^{(0)}}_C = C_2 \norm{\ZZ^h}_{p\textup{-var;}[0,h]},
\]
where the Carnot-Carath\'{e}odory norms are taken over $G^{(2)}(\RR^d)$ and $G^{(\kappa)}(\RR^d)$ respectively. We conclude that $S_\kappa(\WW^h)=\ZZ^h$, as claimed. 

\begin{Remark}\label{r-eq}
Our interest in the enhancement $S_\kappa(\WW^h)$ comes from the following observation that will become critical in the proof of our main result: if we replace the driving rough path $\WW^h$ by $S_\kappa(\WW^h)$ in the RDE (\ref{e-rde-log-ode-result}), the solution of Proposition \ref{p-log-ode-coincide} remains the same, (although we must assume stronger conditions on our vector fields in order for the RDE to have a unique solution: namely that $V = \seq{V_k}_{k=1}^d \in \textup{Lip}^\gamma$, where $\gamma>\kappa>2$ instead of simply $\gamma>2$).
To see this directly, we note that by their construction the log-signatures of $\WW^h$ and $S_\kappa(\WW^h)$ coincide over each increment $[jh,(j+1)h]$:
\[
\log\bra{S_\kappa(\WW^h)_{jh,(j+1)h}} = \log\bra{\WW^h_{jh,(j+1)h}}.
\]
This is a slight abuse of notation since the left-hand side lives in a larger Lie algebra,  albeit with zero terms of multiplicity greater than $2$.
Therefore RDEs driven by $S_\kappa(\WW^h)$ are precisely the same as that driven by the original $\WW^h$. For a formal proof see the perturbation result of \cite[Theorem 12.14]{FV} and \cite[Theorem 2]{friz2009rough}, (the latter covers the drift case). An important consequence of this is that the log-ODE technique produces the same approximation points whether we use $\WW^h$ or $S_\kappa(\WW^h)$ to drive the RDE in Proposition \ref{p-log-ode-coincide}. In fact, this phenomenon holds for arbitrary rough paths and their lifts.
\end{Remark}


\section{Gaussian approximations of L\'{e}vy area}

In practice, it is difficult to drive RDEs with the piecewise abelian rough path $\WW^h$ because we must be able to generate L\'{e}vy area increments $A^{(j)}$, (which is numerically challenging if $d>2$). 
Following the recent papers \cite{davie2014kmt,davie2014pathwise} of Davie, we propose another piecewise abelian $2$-rough path $\XX^h$ which substitutes each $A^{(j)}$ with a suitable Gaussian random variable $B^{(j)}$, thereby being much easier to generate. In particular, the $A^{(j)}$ and $B^{(j)}$ share the same mean and covariance structure, (that is, we are moment matching up to order 2). But before we go into detail and define $\XX^h$, let us closely examine the area increments. The following lemma gives a simple decomposition of $A^{(j)}$ into parts dependent and independent of the corresponding Brownian increment $W^{(j)}:=W(jh,(j+1)h)$. 

\begin{Lemma}\label{l-area-decomposition}
For all $1\leq k < l \leq d$:
\[
A^{(j)}_{kl} = \zeta^{(j)}_k W^{(j)}_l - \zeta^{(j)}_l W^{(j)}_k + K^{(j)}_{kl},
\]
where the $\zeta^{(j)}_k$: $k=1,\ldots,d$, $K^{(j)}_{kl}$: $1\leq k < l \leq d$, are mutually uncorrelated (but not independent), independent of $W^{(j)}$ and have mean zero. Moreover,  $\Var{\zeta^{(j)}_k} = \frac{h}{12}$ and $\Var{K^{(j)}_{kl}}=\frac{h^2}{12}$. 
\end{Lemma}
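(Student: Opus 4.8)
The plan is to construct the decomposition explicitly using the standard Brownian-bridge representation of L\'evy area. Over a single interval, rescale to $[0,h]$ and recall that the L\'evy area $A_{kl}^{(j)}$ depends on the path only through the Brownian bridge $\tilde{W}_t := W_{jh+t} - W_{jh} - \tfrac{t}{h}W^{(j)}$, which is independent of the endpoint increment $W^{(j)}$. First I would substitute $W_{jh,jh+t} = \tilde W_t + \tfrac{t}{h}W^{(j)}$ into the defining integral
\[
A^{(j)}_{kl} = \frac{1}{2}\int_0^h\bra{W_{jh,jh+t}^k\, dW^l - W_{jh,jh+t}^l\, dW^k},
\]
and expand. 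The cross terms involving one factor of $\tilde W$ and one of $\tfrac{t}{h}W^{(j)}$ produce, after an integration by parts, precisely a term of the form $\zeta_k^{(j)}W_l^{(j)} - \zeta_l^{(j)}W_k^{(j)}$, where $\zeta_k^{(j)}$ is (a constant multiple of) $\int_0^h \tilde W_s^k\, ds$ — the time-integral of the bridge, which is a well-known mean-zero Gaussian functional. The purely-bridge term $\tfrac12\int_0^h(\tilde W^k d\tilde W^l - \tilde W^l d\tilde W^k)$ is the L\'evy area of the bridge itself, which I would call $K_{kl}^{(j)}$; it is measurable with respect to the bridge $\sigma$-algebra, hence independent of $W^{(j)}$, and has mean zero.

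The second step is the variance and covariance bookkeeping. For $\zeta_k^{(j)}$: with the normalisation $\zeta_k^{(j)} = \tfrac1h\int_0^h \tilde W^k_s\, ds$ one computes $\Var(\zeta_k^{(j)}) = \tfrac1{h^2}\int_0^h\int_0^h \mathrm{cov}(\tilde W^k_s,\tilde W^k_t)\, ds\, dt$ using $\mathrm{cov}(\tilde W_s,\tilde W_t) = \min(s,t) - st/h$; this is an elementary double integral giving $h/12$. For $K_{kl}^{(j)}$, the variance of the L\'evy area of a Brownian bridge on $[0,h]$ is a classical computation (the bridge area increments scale like $h$ in ``space'', so the area scales like $h^2$) and yields $h^2/12$; one can also read this off from the fact that $\Var(A^{(j)}_{kl}) = h^2/2$ together with the orthogonal decomposition and $\Var(\zeta_k^{(j)}W_l^{(j)}) = \tfrac h{12}\cdot h$. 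The uncorrelatedness claims — that $\zeta_k^{(j)}$, $K_{kl}^{(j)}$ are mutually uncorrelated and each uncorrelated with the $W^{(j)}$ — follow either from independence of distinct coordinate Brownian motions, or, for the pairing of $\zeta$ with $K$ within the same or overlapping coordinates, from a symmetry/parity argument: $\zeta_k^{(j)}$ is a linear functional of the bridge while $K_{kl}^{(j)}$ is a quadratic (antisymmetric) functional, so their covariance is an odd moment of a centred Gaussian and vanishes. I would note explicitly that these variables are \emph{not} independent (e.g. $K_{kl}^{(j)}$ is a function of the same bridge whose integral is $\zeta_k^{(j)}$), matching the statement.

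The main obstacle I anticipate is not any single computation but assembling the uncorrelatedness claims cleanly: one must be careful to check all the pairings — $\zeta$ with $\zeta$ (different coordinates: independent; same coordinate: need to verify, though actually the statement only asserts mutual uncorrelatedness across the listed family, so $\zeta_k$ with $\zeta_k$ is not at issue), $\zeta$ with $K$, $K$ with $K$, and each with $W^{(j)}$ — and to invoke the right mechanism (coordinate independence versus Gaussian parity) in each case. Once the bridge/endpoint split is in place, everything reduces to moments of a centred Gaussian process with an explicit covariance, so no genuinely hard step remains; I would relegate the routine double integrals to a brief remark rather than write them out in full.
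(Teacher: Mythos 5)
Your proposal takes essentially the same route as the paper: both split the path over $[jh,(j+1)h]$ into the Brownian bridge plus the linear interpolant of the endpoint, substitute into the defining integral for $A^{(j)}_{kl}$, and identify $\zeta^{(j)}_k$ as the (normalised) time-integral of the bridge and $K^{(j)}_{kl}$ as the L\'evy area of the bridge, with the variances then following from elementary covariance computations (the paper writes this in terms of scaled bridges $B_k(t/h)$ on $[0,1]$ and a generic iterated-integral formula $I_\alpha$, but the substance is identical). One small numerical slip in your parenthetical cross-check: with the paper's normalisation $A^{(j)}_{kl}=\tfrac12\int(W_k\,dW_l-W_l\,dW_k)$ one has $\Var(A^{(j)}_{kl})=h^2/4$, not $h^2/2$; with the corrected value the orthogonal decomposition indeed gives $h^2/4 - 2\cdot(h/12)\cdot h = h^2/12$ for $\Var(K^{(j)}_{kl})$, consistent with your direct bridge computation.
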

\begin{proof}
We suppose $j=0$ for simplicity and begin by decomposing the $A^{(0)}$ increment into parts dependent and independent of the Brownian increment $W(h)$. To this end, following \cite[\S7]{davie2014pathwise}, we can write $W_k(t)=h^{1/2}B_k(t/h)+th^{-1/2}V_k$ for $t\in [0,h]$, where $B_1,\ldots,B_d$ are independent standard Brownian bridges on $[0,1]$ and $V_k=h^{-1/2}W_k(h)$ are independent $N(0,1)$ (and are independent of the $B_j$). Also write $B_0(t)=t$ and set 
\[
K_\alpha := \int^1_0 \int^{t_l}_0 \ldots \int^{t_2}_0 dB_{j_1}(t_1)\ldots dB_{j_l}(t_l)
\] 
for an index $\alpha=(j_1,\ldots,j_l) \in \{0,1\ldots,d\}^l$. For such an index it can be shown that 
\[
I_\alpha := \int^h_0 \int^{t_l}_0 \ldots \int^{t_2}_0 dW_{j_1}(t_1)\ldots dW_{j_l}(t_l) = h^{(l(\alpha)+n(\alpha))/2}\sum_{\beta=(i_1,\ldots,i_l)} K_\beta \prod_{k:i_k<j_k} V_{j_k},
\]
where the sum is over all $\beta=(i_1,\ldots,i_l)$ such that for each $k\in\seq{1,\ldots,l}$ we have either $i_k=j_k$ or $i_k=0<j_k$. Here we have used $l(\alpha)$ and $n(\alpha)$ to denote the length and number of zero entries of $\alpha$ respectively. 
Noting that $K_{kl}=-K_{lk}$ for $0\leq k < l$, it follows that 
\begin{align*}
A^{(0)}_{12} 
&= \frac{1}{2}\bra{I_{12}-I_{21}} = h\bra{K_{10}V_2 - K_{20}V_1+K_{12}},
\end{align*}
where 
\[
K_{12} = \int^1_0 B_1(t)\, dB_2(t) \textup{ and } K_{j0} = \int^1_0 B_j(t)\, dt \textup{ for } j=1,2. 
\]
Thus $\zeta^{(0)}_j := h^{1/2}K^{(0)}_{j0}$ for $j=1,2$, and $K^{(0)}_{12} := hK_{12}$ gives the claimed decomposition. 
The variances follow from It\^{o}'s isometry. For details we refer to Lemma 7 of \cite{davie2014pathwise}. 
\end{proof}

The fact that $\zeta^{(j)}$ and $K^{(j)}$ are not independent makes them (and consequently $A^{(j)}$) very difficult to simulate numerically. A natural solution would be to approximate these two variables with normal random variables $z^{(j)}$, $\lambda^{(j)}$ with the correct mean and moments, to produce a Gaussian approximation $B^{(j)}$ for $A^{(j)}$. Since uncorrelated Gaussian random variables are necessarily independent, simulation is much easier. This is precisely what Davie proposes:
\begin{equation}\label{e-davie-normal-area}
B^{(j)}_{kl} := z^{(j)}_k \tilde{W}^{(j)}_l - z^{(j)}_l \tilde{W}^{(j)}_k + \lambda^{(j)}_{kl},
\end{equation}
where $z^{(j)}_k$ and $\lambda^{(j)}_{kl}$ are independent normal random variables with $z^{(j)}_k \sim N(0,\frac{h}{12})$ and $\lambda^{(j)}_{kl}\sim N(0,\frac{h^2}{12})$. Here $\tilde{W}^{(j)}\sim N(0,h)$ but this increment may not necessarily be equal to the original Brownian increment $W^{(j)}$. As before, the $z^{(j)}_k$, $\lambda^{(j)}_{kl}$ are all independent of $\tilde{W}^{(j)}$. 

In the same fashion as the construction of $\WW^h$ in Definition \ref{d-pa-rp}, we define $\XX^h\in C\bra{[0,1],G^{(2)}(\RR^d)}$ to be the piecewise abelian, geometric $2$-rough path given by $\XX^h_0 = 1$ and 
\[
\XX^h_{0,t} = \XX^h_{0,jh} \otimes \exp_2\bra{(t-jh)h^{-1}\eta^{(j)}}, \gap t\in [jh, (j+1)h],
\]
where the increments $\eta^{(j)} \in \mathfrak{g}^{(2)}(\RR^d)$ are defined as:
\[
\eta^{(j)} := \sum^d_{k=1} \tilde{W}^{(j)}_k e_k + \sum_{1\leq k < l \leq d} B^{(j)}_{kl} [e_k, e_l]. 
\]
As with $\WW^h$, we may consider the $\kappa$-lift $S_\kappa(\XX^h)$ of $\XX^h$ and, as before, RDEs driven by $\XX^h$ and $S_\kappa(\XX^h)$ coincide. Moreover, again using Theorem 2 of \cite{friz2009rough} we can prove that the approximation scheme $\{x^h_{j}\}_{j=0}^N$ given by (\ref{e-scheme}) coincides with the solution of the following RDE with drift:
\[
dz_t = V\bra{z_t} d\XX^h_t +V_0\bra{z_t}dt, \gap z_0 = x_0\in\RR^q. 
\]
In particular, $z_{jh} = x^h_j$ for $j=1,\ldots,N$. 


\section{Wasserstein coupling of L\'{e}vy area}

In this section we construct a probabilistic coupling of the constituent random variables of $\WW^h$ and $\XX^h$ so as to achieve a coupling of the two piecewise abelian rough paths. We directly follow the dyadic coupling argument of \cite{davie2014kmt} in which Davie presented a numerical approximation scheme for SDEs based on a variant of the famous 1975 theorem of Koml\'{o}s, Major and Tusnady \cite{komlos1975approximation}. 
The latter result is a form of the simultaneous Central Limit Theorem using couplings. As Davie writes, it states that if $\PP$ is a suitably non-degenerate probability measure on $\RR$ with mean zero, variance $1$ and zero third moment, then there exists a universal constant $C>0$ such that the following holds: for each $n\in\NN$, one can construct a probability space on which there exists a sequence of i.i.d random variables $X_1,\ldots,X_n$ with law $\PP$ and a corresponding sequence of i.i.d $N(0,1)$ variables $Y_1,\ldots,Y_n$ such that 
\[
\max_{k=1,\ldots,n} \abs{ \sum_{i=1}^k (X_i - Y_i)}_{L^2} \leq C.
\]
This original KMT Theorem was then extended to vector random variables by Einmahl in \cite{einmahl1989extensions} and then Zaitsev established the result for the case of non-identical distributions which are uniformly non-degenerate in a series of papers \cite{zaitsevALL}. 
For approximating non-Gaussian distributions $\PP$, both coupling results are proven to be optimal among all couplings (see \cite{zaitsev1996estimates,zaitsev1998multidimensional}).

\begin{Remark}
Following Zaitsev's work, Davie proves his own variation which allows the underlying distributions to be random themselves. One cannot just apply the original KMT theorem or Zaitsev's version to the random walk composed of the L\'{e}vy area increments because  one would be unable to say anything about how close the increments of the Brownian motion $W^{(j)}$ and the Gaussian approximation $\tilde{W}^{(j)}$ are. Moreover, in the case of $d=2$, directly applying the classical KMT theorem to the one-dimensional random walk composed of the $A^{(j)}$ increments would give a Wasserstein rate of convergence of $O(-\sqrt{h}\log h)$ by scaling (see \cite[Theorem 1]{zaitsev1996estimates}). Therefore a more sophisticated argument is needed.
\end{Remark}

For our coupling of $\WW^h$ and $\XX^h$ we change a small part of Davie's argument. 
In the original paper \cite{davie2014kmt}, the approximation points of the Milstein scheme were coupled as a vector with the corresponding points of the SDE solution. That is, speaking from the perspective of rough path theory, the coupling took place at the output side of the It\^{o} map $\Xi$.  In contrast, our approach is to use Davie's coupling argument at the input side of $\Xi$. Moreover, in his case Davie coupled the Brownian increments $W^{(j)}$ and $\tilde{W}^{(j)}$ (that is, they were not necessarily equal). In our application, we assume that $\WW^h$ and $\XX^h$ share the same increments: $W^{(j)}=\tilde{W}^{(j)}$ for all $j$. As we will see, this simplification makes calculations using the Baker-Campbell-Hausdorff formula (Theorem \ref{t-BCHf}) much easier to handle. 
What remains is to couple the L\'{e}vy area increments $A^{(j)}$ with the Gaussian approximations $B^{(j)}$ defined above. 

Before stating the coupling result let us introduce some notation. 
Without loss of generality suppose that $N=h^{-1}=2^m$ for some integer $m$. This can always be arranged by extending the SDE to the interval $[0,2^mh]$, where $m$ is the smallest integer such that $2^m\geq N$. Define a dyadic set to be a subset $E \subseteq \{0,1,\ldots,2^m-1\}$ of the form 
\[
E=\seq{k2^n,k2^n+1,\ldots,(k+1)2^n-1},
\]
for some $n\in\{0,1,\ldots,m\}$ and $k\in\{0,1,\ldots,2^{m-n}-1\}$. Define the $[\RR^d,\RR^d]$-valued partial sums
\[
\gamma_E := \sum_{r\in E} A^{(r)}, \gap \lambda_E := \sum_{r\in E} B^{(r)} \textup{ for all } E\subseteq \seq{0,1,\ldots,2^m-1}.
\]

\begin{Proposition}\label{p-kmt-coupling}
There exists a constant $C>0$ and a probability space on which one can define $\{W^{(j)}\}_{j=0}^{2^m-1}$, and $\gamma_E, \lambda_E$ for all subsets $E \subseteq \{0,1,\ldots,2^m-1\}$ (both using the $W^{(j)}$), such that 
\[
\abs{\norm{\gamma_E - \lambda_E}_{(\RR^d)^{\otimes 2}}}_{L^{5/2}} \leq Ch\log (h^{-1}).
\]
In the special case of $E$ being a dyadic subset we have  $\abs{\norm{\gamma_E - \lambda_E}}_{L^{5/2}} \leq Ch$. 
\end{Proposition}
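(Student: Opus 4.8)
The plan is to reduce the statement to a conditional, moment‑matched Gaussian coupling of a random walk with independent increments, and then to quote the dyadic Koml\'{o}s-Major-Tusn\'{a}dy construction of Davie \cite{davie2014kmt} (built on the vector‑valued extensions of Einmahl \cite{einmahl1989extensions} and Zaitsev \cite{zaitsevALL}), all of whose coupling machinery we take over verbatim.

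First I would exploit the decomposition of Lemma \ref{l-area-decomposition}. Since in our setting the Gaussian rough path $\XX^h$ is driven by the same increments as $\WW^h$ (so $\tilde{W}^{(j)}=W^{(j)}$), subtracting (\ref{e-davie-normal-area}) from the identity of Lemma \ref{l-area-decomposition} gives, for $1\le k<l\le d$,
\[
A^{(j)}_{kl}-B^{(j)}_{kl} = \bra{\zeta^{(j)}_k-z^{(j)}_k}W^{(j)}_l - \bra{\zeta^{(j)}_l-z^{(j)}_l}W^{(j)}_k + \bra{K^{(j)}_{kl}-\lambda^{(j)}_{kl}}.
\]
I would then work conditionally on $\mathcal{W}:=\sigma\bra{W^{(j)} : 0\le j\le 2^m-1}$. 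Because the $\zeta^{(j)},K^{(j)}$ are independent of $W^{(j)}$, independent across $j$ and centred, the $[\RR^d,\RR^d]$‑valued vectors $A^{(j)}$ are, conditionally on $\mathcal{W}$, independent and conditionally centred; and a term‑by‑term computation using the uncorrelatedness of $\{\zeta^{(j)}_k,K^{(j)}_{kl}\}$ together with $\Var{\zeta^{(j)}_k}=h/12$ and $\Var{K^{(j)}_{kl}}=h^2/12$ shows that the conditional covariance of $A^{(j)}$ coincides with that of $B^{(j)}$ --- which is conditionally Gaussian, being a fixed affine function of the independent normals $z^{(j)},\lambda^{(j)}$. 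Hence $B^{(j)}$ is exactly the moment‑matched conditional Gaussian surrogate of $A^{(j)}$, and $(\gamma_E,\lambda_E)$ is precisely the pair of random walks to which Davie's coupling applies.

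Next I would check, conditionally on $\mathcal{W}$, the hypotheses of Davie's random‑distribution KMT theorem for the sequence $(A^{(j)})_j$: conditional centredness is immediate; uniform non‑degeneracy holds because the conditional variance of $A^{(j)}_{kl}$ equals $\tfrac{h}{12}\bra{(W^{(j)}_k)^2+(W^{(j)}_l)^2}+\tfrac{h^2}{12}\ge\tfrac{h^2}{12}$; the upper control of the conditional covariances (which brings in $\norm{W^{(j)}}^2$) together with finiteness of the higher moments are handled, exactly as in \cite{davie2014kmt}, via the Gaussian tails of the $W^{(j)}$ and a crude moment estimate on the negligible event on which some $\norm{W^{(j)}}$ is atypically large (the $A^{(j)}$, being iterated integrals of a Gaussian process, have finite moments of all orders); and the required vanishing of the third cumulants follows from the sign‑change and coordinate‑permutation symmetries of the Brownian bridges appearing in Lemma \ref{l-area-decomposition}, combined with the antisymmetry of the iterated integrals of those bridges. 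Granting these, Davie's dyadic construction produces, on a single probability space carrying the $W^{(j)}$, the coupled partial sums $\gamma_E=\sum_{r\in E}A^{(r)}$ and $\lambda_E=\sum_{r\in E}B^{(r)}$ such that, after the rescaling by $h^{-1}$ that standardises the increments, the sum over any dyadic block is $L^{5/2}$‑approximated with an $O(1)$ error; undoing the rescaling yields $\abs{\norm{\gamma_E-\lambda_E}}_{L^{5/2}}\le Ch$ for dyadic $E$. For a general interval $E$ of consecutive indices --- the case actually needed in the sequel --- one writes $E$ as a disjoint union of at most $2\log_2(h^{-1})$ dyadic blocks and uses the triangle inequality in $L^{5/2}$ to recover the bound $Ch\log(h^{-1})$ of the first assertion.

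The main obstacle is not the dyadic bookkeeping but the verification that the \emph{conditional} laws of the area increments satisfy every hypothesis of the random‑coefficient, vector‑valued KMT theorem invoked --- in particular reconciling the uniform lower bound on the conditional covariance with its lack of an a priori upper bound, checking the third‑moment conditions, and confirming that Davie's construction really does yield the $O(h)$ (rather than $O(h\log h^{-1})$) estimate for an individual dyadic block. Since the increments $A^{(j)}$ and $B^{(j)}$ are literally those of \cite{davie2014kmt} and the only change is where the resulting coupling is later used --- on the input side of the It\^{o} map $\Xi$ rather than on its output side, as in Davie's direct coupling of the SDE solution with the Milstein scheme --- each of these verifications is identical to Davie's, and we would simply refer to \cite{davie2014kmt} for the details.
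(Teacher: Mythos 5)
Your proposal follows essentially the same argument as the paper: condition on the Brownian increments, observe via Lemma \ref{l-area-decomposition} that $A^{(j)}$ and $B^{(j)}$ have matching conditional means and covariances with the required uniform non-degeneracy (eigenvalues bounded below by a multiple of $h^2$) and $L^q$-moment bounds, invoke Davie's dyadic KMT coupling theorem from \cite{davie2014kmt}, rescale to obtain $Ch$ on dyadic blocks, and decompose a general $E$ into $O(\log h^{-1})$ dyadic pieces. The only presentational difference is that the paper makes the linear-algebra explicit by writing $h^{-1}A^{(r)}=G_rX^{(r)}$ with a concrete block matrix $G_r$ and then verifying moment bounds for $H_E=2^{-n}\sum_{r\in E}G_rG_r^t$ and $H_E^{-1}$, whereas you describe the same verification more abstractly; this is cosmetic rather than substantive.
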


\begin{Remark}
Regardless of our coupling, by scaling we automatically have
\[
\abs{\norm{\gamma_E - \lambda_E}}_{L^{5/2}} \leq \abs{\norm{\gamma_E}}_{L^{5/2}} + \abs{\norm{\lambda_E}}_{L^{5/2}} \leq Ch \textup{ when } E=\seq{i},
\]
and so Proposition \ref{p-kmt-coupling} is trivial when considered locally, (that is on single intervals). The point is that the coupling performs well globally and it is this property which we will exploit when considering $p$-variation of the coupled lifted piecewise abelian rough paths. Heuristically, one can think of this as a probabilistic analogue of the fact that there exists paths which are far apart in $1$-variation but very close in $p$-variation for large $p\gg1$.
\end{Remark}

We repeat again: the proof is essentially a special case of Davie's original Theorem 1 of \cite{davie2014kmt}. 

\begin{proof}
Let $\Gg$ denote the $\sigma$-algebra generated by the increments $W^{(0)}, \ldots, W^{(N-1)}$. 
For each $r$ define a random vector $X^{(r)}\in \RR^{\frac{d}{2}(d+1)}$ by $X^{(r)}_k = (\frac{h}{12})^{-1/2}\zeta^{(r)}_k$ for $k=1,\ldots,d$ and $X^{(r)}_{\frac{k}{2}(2d-k-1)+l}= (\frac{h^2}{12})^{-1/2} K^{(r)}_{kl}$ for $1\leq k < l \leq d$. 
Then, (conditional on $\Gg$), $X^{(r)}$ has mean zero and covariance matrix $I_{\frac{d}{2}(d+1)}$. We can then write 
\[
h^{-1}A^{(r)} = G_r X^{(r)},
\]
where $G_r$ is a $\frac{d}{2}(d-1) \times \frac{d}{2}(d+1)$ matrix defined in terms of the $W^{(j)}$. Specifically
\[
G_{r}=\frac{1}{\sqrt{12}}\left(\begin{array}{c|c}
M_{r} & I_{\frac{d}{2}(d-1)}\end{array}\right), 
\]
setting $M_r$ to be the $\frac{d}{2}(d-1) \times d$ matrix defined by the rows
\[
\bra{M_r}_{\frac{k}{2}(2d-k-1)+(l-d)} = h^{-1/2} \bra{ W^{(r)}_l e_k - W^{(r)}_k e_l}.
\] 
This makes $M_r$ have the form:
\[
M_r = h^{-1/2}
\left(\begin{array}{cccccc}
W_{2}^{(r)} & -W_{1}^{(r)} & 0 & \cdots & 0 & 0\\
W_{3}^{(r)} & 0 & -W_{1}^{(r)} & \cdots & 0 & 0\\
\vdots & \vdots & \vdots & \ddots & \vdots & \vdots\\
0 & 0 & 0 & \cdots & W_{d}^{(r)} & -W_{d-1}^{(r)}
\end{array}\right).
\]
In the same way we have $h^{-1}B^{(r)} = G_r \tilde{X}^{(r)}$, where $\tilde{X}^{(r)}$ is $N(0,I_{\frac{d}{2}(d+1)})$. 

It follows that $G_r G_r^t = \frac{1}{12}(I + M_r M_r^t)$ is a positive-definite symmetric matrix. Since $h^{-1/2}W^{(r)} \sim N(0,1)$, certainly $\EE\bra{\norm{G_r}^q} \leq C(q)$ for all $q\geq 1$. 
Moreover, the eigenvalues of $G_r G_r^t$ are bounded below by $\frac{1}{12}$, hence $\norm{(G_r G_r^t)^{-1}} \leq 12$. Note that conditional on $\Gg$, $A^{(r)}$ and $B^{(r)}$ have the same covariance matrix $h^2 G_r G_r^t$.  

For each dyadic set $E$ of size $2^n$ define the matrix $H_E=2^{-n}\sum_{r\in E} G_r G_r^t$. Since, conditional on $\Gg$, the random variables $A^{(0)}, \ldots, A^{(N-1)}$ are independent, $H_E$ is the (conditional) covariance matrix of $Y_E:= 2^{-n/2}h^{-1}\gamma_E$. Similarly $H_E$ is also the (conditional) covariance matrix of $Z_E:= 2^{-n/2}h^{-1}\lambda_E$.
Note that $H_E^{-1}$ is well defined since $G_r G_r^t$ are positive-definite symmetric matrices. Moreover, $H_E = \frac{1}{12}( I +  2^{-n} \sum_{r\in E} M_r M_r^t)$, so the eigenvalues of $H_E$ are also bounded below by $\frac{1}{12}$ and hence $\norm{H_E^{-1}} \leq 12$. It follows that $\EE\bra{\norm{H_E}^q} \leq C(q)$, $q\geq 1$. 

Having established suitable $L^q$-bounds on the matrices $H_E$ and $H_E^{-1}$, the proof then follows precisely the same course as that of \cite{davie2014kmt}. The idea of Davie's proof is to construct couplings of $Y_E$ and $Z_E$ recursively, starting with the base case $E_0 = \{0,1,\ldots,2^m-1\}$ and proceeding by successive bisection. Since the proof is precisely the same we omit the details for the sake of brevity. 

\begin{Proposition}[Davie \cite{davie2014kmt}]
Suppose that for each $q\geq 1$, there exists a constant $C=C(q)$ such that $
\EE\bra{\norm{H_E}^q}, \EE\bra{\norm{H_E^{-1}}^q} \leq C$ for every dyadic set $E$. Then there exists a constant $C>0$ and a probability space on which we can define $\{W^{(j)}\}_{j=0}^{2^m-1}$, and $\gamma_E, \lambda_E$ for all subsets $E\subseteq \{0,1,\ldots,2^m-1\}$, such that
\[
\abs{\norm{Y_E - Z_E}}_{L^{5/2}} \leq C2^{-n/2}
\] 
for every dyadic set $E$ of size $2^n$.
\end{Proposition}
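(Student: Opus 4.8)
The plan is to carry over, essentially verbatim, the recursive dyadic coupling of Davie \cite{davie2014kmt} --- a multidimensional, non-identically-distributed refinement of the Koml\'{o}s--Major--Tusn\'{a}dy construction --- working throughout conditionally on $\Gg$. Conditionally on $\Gg$, the area increments $A^{(0)},\dots,A^{(N-1)}$ are independent and mean zero, with $A^{(r)}=hG_rX^{(r)}$ where the law of $X^{(r)}$ does not depend on $\Gg$, is centred with identity covariance, and has finite moments of every order (being a polynomial functional of Brownian bridges, cf.\ Lemma~\ref{l-area-decomposition}), while $B^{(r)}=hG_r\tilde X^{(r)}$ with $\tilde X^{(r)}\sim N(0,I)$. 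Thus, conditionally on $\Gg$, each $\gamma_E$ is a sum of $|E|$ independent, uniformly non-degenerate (recall $\norm{H_E^{-1}}\le 12$ almost surely), bounded-moment vectors, and $\lambda_E$ is the centred Gaussian with the same covariance $h^2|E|H_E$. The hypotheses $\mathbb{E}\norm{H_E}^q\le C$, $\mathbb{E}\norm{H_E^{-1}}^q\le C$ (together with $\mathbb{E}\norm{G_r}^q\le C$) will be invoked only at the end: all the coupling estimates below hold conditionally on $\Gg$ with random constants that are polynomial in $\norm{H_{E'}}$, $\norm{H_{E'}^{-1}}$ and $\norm{G_r}$ for the $E'$ involved, hence integrate to finite unconditional $L^{5/2}$ bounds.

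I would build the coupling by descending the dyadic tree. First, at the root $E_0=\{0,\dots,2^m-1\}$, I would apply the strong-approximation theorem for sums of independent non-identically-distributed vectors (Einmahl \cite{einmahl1989extensions}, Zaitsev \cite{zaitsevALL}), conditionally on $\Gg$, to couple $\gamma_{E_0}$ with its Gaussian analogue $\lambda_{E_0}$ so that the conditional $L^{5/2}$-distance is at most $F(\Gg)\,h$; the crucial point is that the error is at the scale $h$ of a \emph{single} summand $A^{(r)}=hG_rX^{(r)}$, not at the scale $h\sqrt{2^m}$ of the sum, this being exactly the content of the KMT/Zaitsev bound, and $F$ is a random constant with all moments finite. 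Then, at a node $E$ of size $2^n$ with dyadic children $E_L,E_R$, I would construct, conditionally on $\Gg$ and on the already-coupled pair $(\gamma_E,\lambda_E)$, a coupling of $\gamma_{E_L}$ --- whose conditional law given $\gamma_E$ is that of a sum of $2^{n-1}$ independents conditioned to total $\gamma_E$ --- with $\lambda_{E_L}$, the conditional Gaussian given $\lambda_E$; this is done by writing $\gamma_{E_L}=\phi(\gamma_E,U)$ and $\lambda_{E_L}=\psi(\lambda_E,U)$ for a shared auxiliary variable $U$, where $\phi$ differs from $\psi$ only by a fresh $L^{5/2}$-coupling error of size $O(h)$ and where $\phi(\cdot,U)$ is, in covariance-whitened coordinates, a contraction with gap controlled by $\norm{H_E^{-1}}$ and $\norm{H_E}$ (the conditional-mean `bridge' map is $\Sigma_{E_L}\Sigma_E^{-1}$, with eigenvalues in $[0,1]$ and bounded away from $1$ because $G_rG_r^t\succeq\tfrac1{12}I$ forces $\Sigma_{E_R}$ to be non-degenerate). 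Writing $\Delta_E:=\gamma_E-\lambda_E$, this yields a recursion of the shape $\|\Delta_{E_L}\|_{L^{5/2}(\,\cdot\,|\Gg)}\le L(\Gg)\,\|\Delta_E\|_{L^{5/2}(\,\cdot\,|\Gg)}+F(\Gg)h$ with $L(\Gg)<1$; starting from $\|\Delta_{E_0}\|_{L^{5/2}(\,\cdot\,|\Gg)}\le F(\Gg)h$ and iterating down to $E$ propagates the bound $\|\Delta_E\|_{L^{5/2}(\,\cdot\,|\Gg)}\le F(\Gg)h/(1-L(\Gg))$, uniformly in the depth $m-n$. Integrating out $\Gg$ via the moment hypotheses gives $\abs{\norm{\gamma_E-\lambda_E}}_{L^{5/2}}\le Ch$ for every dyadic $E$, and since $Y_E-Z_E=2^{-n/2}h^{-1}(\gamma_E-\lambda_E)$ this is precisely $\abs{\norm{Y_E-Z_E}}_{L^{5/2}}\le C2^{-n/2}$.

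The main obstacle --- and the one place where I would simply cite \cite{davie2014kmt} rather than reprove --- is the dyadic bisection step: proving that the conditional law of a half-sum given the total can be coupled with its Gaussian counterpart at the single-summand scale $O(h)$ while the parent discrepancy is inherited with a genuine contraction, so that the errors along the $m$ levels telescope to $O(h)$ rather than accumulating a spurious factor $\log(h^{-1})$. (Such a logarithmic factor does appear in Proposition~\ref{p-kmt-coupling} for non-dyadic $E$, where one additionally takes a maximum over $O(h^{-1})$ partial sums, via the usual KMT maximal inequality.) A subsidiary technical point is to verify that every conditional constant produced by the Einmahl--Zaitsev input and by the bisection lemma is polynomial in the $\norm{H_{E'}}$, $\norm{H_{E'}^{-1}}$ and $\norm{G_r}$, so that the stated moment hypotheses genuinely suffice for the final integration; this is routine given the explicit form $G_r=\tfrac1{\sqrt{12}}(M_r\mid I)$ recorded above.
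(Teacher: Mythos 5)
Your proposal follows exactly the route the paper takes: the paper itself gives no proof of this Proposition beyond noting that "the idea of Davie's proof is to construct couplings of $Y_E$ and $Z_E$ recursively, starting with the base case $E_0 = \{0,1,\ldots,2^m-1\}$ and proceeding by successive bisection," and then deferring entirely to \cite{davie2014kmt} since "the proof is precisely the same." Your sketch fleshes out this same dyadic construction (conditional on $\Gg$, KMT/Zaitsev at the root, contraction via the bridge map at each bisection, moment hypotheses on $H_E$, $H_E^{-1}$ used to integrate out $\Gg$) while, as you say explicitly, citing Davie for the bisection lemma rather than re-proving it — which is exactly what the paper does. One minor aside: your parenthetical attributes the $\log(h^{-1})$ factor in the non-dyadic case of Proposition~\ref{p-kmt-coupling} to a KMT maximal inequality over $O(h^{-1})$ partial sums, whereas the paper obtains it simply by writing a general $E$ as a disjoint union of at most $\log_2 N$ dyadic sets and applying the triangle inequality; this does not affect the correctness of your argument for the Proposition itself.
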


Returning to the proof of Proposition \ref{p-kmt-coupling}, Davie's result yields 
\[
\abs{\norm{\gamma_E - \lambda_E}}_{L^{5/2}} = 2^{n/2}h\abs{\norm{Y_E-Z_E}}_{L^{5/2}} \leq Ch,
\]
whenever $E$ is a dyadic set of size $2^n$. A general, not necessarily dyadic, subset $E$ can be expressed as the disjoint union of at most $\log_2 N=\log_2 (h^{-1})$ dyadic sets $E_1,\ldots,E_k$ of different sizes. It follows that 
\[
\abs{\norm{\gamma_E-\lambda_E}}_{L^{5/2}} \leq \sum_{j=1}^k \abs{\norm{\gamma_{E_j}-\lambda_{E_j}}}_{L^{5/2}} \leq Ch\log ( h^{-1}).
\]
The proof is complete. 
\end{proof}

\begin{Remark}
One could argue that we could have saved ourselves trouble by simply applying Davie's original coupling result to the SDE defining the L\'{e}vy area. This approach is perfectly sound since this SDE satisfies the non-degeneracy condition (\ref{e-davie-non-deg}) and we would end up with an approximation $\{\tilde{B}^{(j)}\}_{j=0}^{N-1}$  of the L\'{e}vy area increments such that 
\[
\tilde{\lambda}_E := \sum_{r\in E} \tilde{B}^{(r)} \gap\textup{satisfies}\gap \abs{\norm{\gamma_E-\tilde{\lambda}_E}}_{L^2} \leq Ch\log (h^{-1}), \gap \forall E\subseteq \seq{0,1,\ldots,N-1}.
\]
However, we would not be able to guarantee that the increments $W^{(j)}$ and $\tilde{W}^{(j)}$ making up $\gamma_E$ and $\tilde{\lambda}_E$ would necessarily be equal. 
As mentioned, we will see in the proceeding sections that having the increments $W^{(j)}=\tilde{W}^{(j)}$ equal in the definition of $\WW^h$ and $\XX^h$ greatly reduces the complexity of some of the computations involving the Baker-Campbell-Hausdorff formula.
\end{Remark}

As defined in the introduction, let $\Chi^h$ and $\Theta^h$ denote the $[\RR^d,\RR^d]$-valued $N(=h^{-1})$-step random walks made up of the increments $A^{(j)}$ and $B^{(j)}$ respectively.
Proposition \ref{p-kmt-coupling} establishes a non-local coupling in the sense that the $\Chi^h$ and $\Theta^h$ are \textit{not} adapted to the same filtration. Importantly, this means that the error given by the discrete process $\delta^h_j:= \Chi^h_j - \Theta^h_j$ is not a martingale, and so we cannot employ Doob's maximal $L^2$-martingale inequality to arrive quickly and painlessly at a useful maximal inequality for the coupling error. 


\section{Coupling piecewise abelian rough paths}\label{s-measure}

The coupling provided by Proposition \ref{p-kmt-coupling} automatically induces a coupling between $\WW^h$ and $\XX^h$. One may ask how well this coupling performs in a given rough path metric topology. 
Since we are interested in numerically approximating SDEs using these rough paths, we need to employ the useful RDE Lipschitz estimates of \cite{bayer2013rough,cass2013integrability,friz2013integrability}. These estimates use the the inhomogeneous $p$-variation metric and so this is the metric we focus on (see Proposition \ref{p-lipschitz-rde}). 

It can be shown that there is an upper bound of $O(-h\log h)$ on the performance of any coupling of $\WW^h$ and $\XX^h$ under the restriction that their underlying Gaussian increments agree. To be precise, the performance is measured in the Wasserstein metric using the inhomogeneous $p$-variation metric as the cost function. This is the content of Corollary \ref{c-optimal-bound} below. 
The first step of proving this upper bound is the following proposition which is built directly upon a similar result established in the final example of \cite{davie2014kmt}. 
For ease of notation, denote the Cartesian product $[\RR^d,\RR^d]^{\times N}$ by $[\RR^d,\RR^d]^N$.

\begin{Proposition}\label{p-davie-counter}
Suppose $d\geq 2$ and let $\mu$ and $\nu$ denote the laws of $\{\Chi^h_j\}_{j=1}^N$ and $\{\Theta^h\}_{j=1}^N$ in $[\RR^d,\RR^d]^N$. Set 
\[
\Mm_*(\mu,\nu) := \seq{\Psi : [\RR^d,\RR^d]^N \to [\RR^d,\RR^d]^N : \Psi_*(\mu)=\nu \textup{ and } \Psi \textup{ measurable}}.
\]
Then for all $q\geq 1$, there exists a constant $c=c(q)>0$ such that 
\[
\Ww^*_q(\mu,\nu) := \bra{\inf_{\Psi \in \Mm_*(\mu,\nu)} \int_{[\RR^d,\RR^d]^N} \bra{\max_{k=1,\ldots,N} \norm{\sum_{j=1}^k \bra{x_j-\Psi(x)_j}}_{(\RR^d)^{\otimes 2}}}^q \mu(dx)}^{1/q} \geq ch\log(h^{-1}).
\]
\end{Proposition}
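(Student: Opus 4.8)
The plan is to reduce the statement to a one‑dimensional lower bound of Koml\'os--Major--Tusn\'ady type, for which we appeal to the final example of \cite{davie2014kmt}. First I would replace the Monge formulation by joint laws: every $\Psi\in\Mm_*(\mu,\nu)$ produces, via its graph, a pair of $[\RR^d,\RR^d]^N$‑valued random vectors $X\sim\mu$ and $Y=\Psi(X)\sim\nu$ on a common space with exactly the same cost, so it is enough to bound
\[
\EE{\max_{1\le k\le N}\norm{\sum_{j=1}^k(X_j-Y_j)}_{(\RR^d)^{\otimes 2}}^q}^{1/q}
\]
from below uniformly over all couplings of $\mu$ and $\nu$; here $\sum_{j\le k}(X_j-Y_j)$ is (an affine image of) the difference of the two $[\RR^d,\RR^d]$‑valued random walks up to step $k$. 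Since $d\ge 2$, I would then project onto the coefficient of $[e_1,e_2]$: this is linear, hence commutes with the partial sums; it does not increase the cost (up to a fixed constant, by equivalence of norms on $(\RR^d)^{\otimes 2}$); and it carries a coupling of $(\mu,\nu)$ to a coupling of the projected laws. By Brownian scaling and Lemma \ref{l-area-decomposition}, the projected marginals are the laws of $N$ i.i.d.\ copies of $h\,a$ and of $h\,b$ respectively, where $a:=A_{12}(0,1)$ is the unit‑time L\'evy area of a planar Brownian motion and $b:=z_1w_2-z_2w_1+\lambda_{12}$ with $z_1,z_2,\lambda_{12}\sim N(0,\tfrac1{12})$ and $w_1,w_2\sim N(0,1)$ all independent. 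A scaling argument (the cost is positively homogeneous of degree $q$, so $\Ww^*_q$ scales linearly in $h$) then shows it suffices to prove: for $\alpha_1,\dots,\alpha_N$ i.i.d.\ $\sim a$ and $\beta_1,\dots,\beta_N$ i.i.d.\ $\sim b$ on any common probability space,
\[
\EE{\max_{1\le k\le N}\abs{\sum_{j=1}^k(\alpha_j-\beta_j)}^q}^{1/q}\ \ge\ c(q)\log N.
\]
As $\norm{\cdot}_{L^q}$ is non‑decreasing in $q$, it is enough to treat $q=1$, and then one may take $c$ independent of $q\ge1$.

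For the scalar bound I would argue as in \cite{davie2014kmt}. Both $a$ and $b$ are centred, have the same variance, and are symmetric (so their third cumulants vanish), but $\mathcal L(a)\ne\mathcal L(b)$. I would verify this through fourth moments: from the classical Fourier transform $\EE{e^{i\theta a}}=\bra{\cosh(\theta/2)}^{-1}$ one obtains $\kappa_4(a)=\tfrac18$, whereas a direct Gaussian expansion gives $\kappa_4(b)=\tfrac1{12}$, so $a$ and $b$ agree to second order but differ at cumulant order $r=4$. Both have Gaussian tails, so all cumulants are finite. The required inequality is then precisely the optimality half of the KMT theorem in its two‑distribution form --- if two i.i.d.\ sequences have laws agreeing to second order but differing at some finite cumulant order, then no coupling of their partial‑sum processes can be uniformly closer than of order $\log N$ --- which is exactly what Davie establishes in the final example of \cite{davie2014kmt} for this very pair of random walks; I would import that estimate (and, where it is phrased only for the $L^2$ norm, extend it via $\norm{\cdot}_{L^q}\ge\norm{\cdot}_{L^1}$). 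In outline, the mechanism there is a Cram\'er/Edgeworth‑type moderate‑deviation estimate showing that at the scale $x\asymp\sqrt{m\log m}$ the laws of $\sum_{j\le m}\alpha_j$ and $\sum_{j\le m}\beta_j$ differ by a multiplicative factor bounded away from $1$, governed by $\kappa_4(a)-\kappa_4(b)\ne 0$; evaluating this on the dyadic blocks $m=N,N/2,\dots,1$ and combining the resulting discrepancies across the $\log_2 N$ scales forces the displayed maximum to be of order $\log N$.

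Undoing the $h$‑rescaling and the projection then gives $\Ww^*_q(\mu,\nu)\ge c\,h\log N = c\,h\log(h^{-1})$, which is the claim. The routine ingredients are the passage to joint laws, the projection and scaling bookkeeping, and the fourth‑moment computation $\kappa_4(a)\ne\kappa_4(b)$; the substantive ingredient, which I expect to be the main obstacle to a self‑contained treatment, is the two‑sided KMT lower bound, and this is what we take from \cite{davie2014kmt}. One point that requires care is that $a$ and $b$ must differ in a manner that survives into the moderate‑deviation regime --- i.e.\ at a genuine cumulant --- which is why the argument is run through $\kappa_4$ rather than, say, a total‑variation discrepancy at a fixed scale, the latter yielding only a constant and not the factor $\log N$.
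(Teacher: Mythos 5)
Your proof is correct and reaches the same conclusion as the paper's, but by a noticeably different and more explicit route. The paper argues via rough-path/SDE structure: it observes that for the $d=2$ SDE defining L\'evy area the Milstein scheme is exact, identifies the accumulated areas of $\Chi^h,\Theta^h$ with the exact solution $x(jh)$ and with Davie's approximation $\tilde{x}^{(j)}$, and then cites Davie's probability lower bound $\PP(\max_j\norm{\tilde{x}^{(j)}-x(jh)}\geq ch\log(h^{-1}))>2^{-1}$ directly, finishing with Markov's inequality. You instead pass to Kantorovich couplings, project onto the scalar $[e_1,e_2]$-coordinate, use Brownian scaling to reduce to coupling two i.i.d. scalar sequences with unit step, reduce $q\geq 1$ to $q=1$, and then verify by an explicit cumulant computation ($\kappa_4(a)=\tfrac18$ from $\EE e^{i\theta a}=\operatorname{sech}(\theta/2)$ versus $\kappa_4(b)=\tfrac1{12}$) that the two laws differ at a finite cumulant order, so that the two-distribution KMT lower bound applies. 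Both proofs ultimately hinge on the same nontrivial external input — the final example in \cite{davie2014kmt} — but you isolate the essential one-dimensional mechanism (a cumulant mismatch propagating through moderate deviations over the $\log_2 N$ dyadic scales) rather than inheriting it wholesale from the SDE counterexample, which makes the argument more self-explanatory. Your $\kappa_4$ computations check out, and the projection/scaling/$q$-monotonicity bookkeeping is all in the correct direction. One small point worth stating explicitly if you write this up: the paper invokes Davie's result for \emph{arbitrary} couplings of the underlying $(\tilde W^{(j)},z^{(j)},\lambda^{(j)})$ with $(W,A^{(j)})$, and your reduction quietly relies on the fact that any coupling of the increment sequences $(A^{(j)})_j$ and $(B^{(j)})_j$ can be extended to such an underlying coupling by disintegration, which is standard but should be mentioned.
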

\begin{proof}
It suffices to prove the proposition for $d=2$. 
Consider the SDE defining L\'{e}vy area:
\[
dx_1 = dW_1, \gap dx_2 = dW_2, \gap dx_3 = \frac{1}{2}(x_1\, dW_2 - x_2\, dW_1), 
\]
on the time interval $[0,1]$, with initial condition $x_i(0)=0$. It follows that $x_3(t)=A_{12}(0,t)$ and by direct calculation it can be shown that the corresponding Milstein scheme $\{{y}^{(j)}\}_{j=1}^N$ is exact in that $y^{(j)}=x(jh)$. In particular, ${y}_{3}^{(j)}=A_{12}(0,jh)$ for all $j$. 
At each time $t=jh$, the accumulated area of $\Chi^h$ corresponds to $A_{12}(0,jh)={y}_{3}^{(j)}$, while the accumulated area of $\Theta^h$ equals $\tilde{x}^{(j)}$, the point produced by the approximation scheme proposed in \cite{davie2014kmt} using the $B^{(j)}$ increments (where we do not assume that $\tilde{W}^{(j)}$ is necessarily equal to $W^{(j)}$ in our coupling).
Davie proves that there is a constant $c>0$ such that, for every integer $N$ and any coupling between the random variables $\tilde{W}^{(j)},z^{(j)}, \lambda^{(j)}$ (used to define $B^{(j)}$) and the Brownian motion $W$ and its L\'{e}vy area increments $A^{(j)}$, we have 
\begin{equation}\label{e-davie-example}
\PP\bra{\max_{j=1,\ldots,N} \norm{\tilde{x}^{(j)}-x(jh)}_{\RR^3} \geq ch\log(h^{-1})} > 2^{-1}.
\end{equation}
Markov's inequality then finishes the proof. 
\end{proof}

\begin{Corollary}\label{c-optimal-bound}
Suppose $d\geq 2$, $p\in [2,3)$, and $q\geq 1$. Let $\mu,\nu$ denote the respective measures of the piecewise abelian rough paths $\WW^h$ and $\XX^h$ on $G\Omega_p(\RR^d)$, and set 
\[
\Mm_\star(\mu,\nu) := \seq{\Psi : G\Omega_p(\RR^d) \to G\Omega_p(\RR^d) : \Psi_*(\mu) = \nu, \Psi \textup{ measurable and } \pi_1\bra{\Psi(\XX)}=\pi_1(\XX)}.
\]
Then there exists a constant $c=c(q)>0$ such that 
\[
\Ww_q^\star(\mu,\nu) := \bra{\inf_{\Psi\in\Mm_\star(\mu,\nu)} \int_{G\Omega_p(\RR^d)} \rho_{p\textup{-var;}[0,1]} \bra{\XX,\Psi(\XX)}^q \mu(d\XX)}^{1/q} \geq ch\log (h^{-1}). 
\]
\end{Corollary}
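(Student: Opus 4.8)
The plan is to transfer the lower bound of Proposition~\ref{p-davie-counter} from the accumulated L\'evy-area walks up to the rough paths $\WW^h,\XX^h$, exploiting the fact that any coupling in $\Mm_\star(\mu,\nu)$ is forced to preserve the common first level.

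First I would fix an arbitrary $\Psi\in\Mm_\star(\mu,\nu)$, set $\XX':=\Psi(\WW^h)\sim\nu$, and record that $(\WW^h,\XX')$ is a coupling with $\pi_1(\XX')=\pi_1(\WW^h)$ almost surely. Since $\nu$ is carried by piecewise abelian rough paths of the same form as $\XX^h$, $\XX'$ is a.s.\ such a rough path, with $\XX'_{jh,(j+1)h}=\exp_2\bra{\sum_k \tilde W^{(j)}_k e_k+\sum_{k<l}\tilde B^{(j)}_{kl}[e_k,e_l]}$, where $((\tilde W^{(j)}),(\tilde B^{(j)}))$ has the joint law of $((W^{(j)}),(B^{(j)}))$; the level-$1$ constraint forces $\tilde W^{(j)}=W^{(j)}$ for every $j$. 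Iterating Chen's relation and cancelling the $\pi_1\otimes\pi_1$ cross terms (which depend only on the common first level), for each grid point one gets
\[
\pi_2\bigl(\WW^h_{0,kh}\bigr)-\pi_2\bigl(\XX'_{0,kh}\bigr)=\sum_{0\le j<k}\bigl(A^{(j)}-\tilde B^{(j)}\bigr),\qquad k=1,\dots,N,
\]
because over each interval $\pi_2(\WW^h_{jh,(j+1)h})=\tfrac12(W^{(j)})^{\otimes2}+A^{(j)}$ and $\pi_2(\XX'_{jh,(j+1)h})=\tfrac12(W^{(j)})^{\otimes2}+\tilde B^{(j)}$.

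Next I would compare metrics. Using the single-interval partition $\{0,kh\}$ in the definition of $\rho^{(2)}_{p\textup{-var};[0,1]}$ and the linearity of $\pi_2$, $\rho^{(2)}_{p\textup{-var};[0,1]}(\WW^h,\XX')\ge\norm{\pi_2(\WW^h_{0,kh})-\pi_2(\XX'_{0,kh})}_{(\RR^d)^{\otimes2}}$ for every $k$; and since $p\in[2,3)$ we have $\rho_{p\textup{-var};[0,1]}\ge\rho^{(2)}_{p\textup{-var};[0,1]}$. Hence
\[
\rho_{p\textup{-var};[0,1]}(\WW^h,\XX')\ \ge\ \max_{k=1,\dots,N}\norm{\sum_{0\le j<k}\bigl(A^{(j)}-\tilde B^{(j)}\bigr)}_{(\RR^d)^{\otimes2}}.
\]
The right-hand side is exactly the quantity to which Davie's counterexample applies: the joint law of $(W^{(j)}),(A^{(j)}),(\tilde B^{(j)})$ is a coupling of the true Brownian increments and L\'evy areas with the Gaussian approximations built on the \emph{same} increments $W^{(j)}$, so restricting (if $d>2$) to the $e_1\otimes e_2,e_2\otimes e_1$ coordinates and applying (\ref{e-davie-example}) gives a constant $c>0$, independent of $N$ and of $\Psi$, with $\PP\bra{\max_k\norm{\sum_{j<k}(A^{(j)}-\tilde B^{(j)})}_{(\RR^d)^{\otimes2}}\ge ch\log(h^{-1})}>\tfrac12$; equivalently one invokes Proposition~\ref{p-davie-counter} together with the Monge--Kantorovich equivalence, the law of $(A^{(j)})_j$ being atomless. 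Combining with the previous display and $\EE\bra{Z^q}\ge a^q\PP(Z\ge a)$ for $Z\ge0$,
\[
\int_{G\Omega_p(\RR^d)}\rho_{p\textup{-var};[0,1]}(\XX,\Psi(\XX))^q\,\mu(d\XX)=\EE\bra{\rho_{p\textup{-var};[0,1]}(\WW^h,\XX')^q}\ \ge\ \tfrac12\bigl(ch\log(h^{-1})\bigr)^q,
\]
and taking $q$-th roots and then $\inf_{\Psi\in\Mm_\star(\mu,\nu)}$ yields $\Ww_q^\star(\mu,\nu)\ge 2^{-1/q}ch\log(h^{-1})$, i.e.\ the claim with $c(q):=2^{-1/q}c$.

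The main obstacle is the bookkeeping of the first step: one must verify that a measurable, level-$1$-preserving $\Psi$ genuinely yields, from $\Psi(\WW^h)$, a bona fide realization of the increments $B^{(j)}$ coupled to the true $(W,A^{(j)})$ (so that (\ref{e-davie-example}) applies, with $\tilde W^{(j)}=W^{(j)}$), and that at the dyadic grid points the level-$2$ discrepancy of the two piecewise abelian lifts is \emph{precisely} the accumulated area discrepancy --- this is where the shared first level and Chen's relation are used to annihilate the $\pi_1\otimes\pi_1$ terms. The remaining ingredients (the trivial-partition comparison of $\rho^{(2)}$ with pointwise level-$2$ increments, and the Markov-type passage from a probability bound to an $L^q$ bound) are routine.
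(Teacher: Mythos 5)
Your argument is correct and follows the same route as the paper: restrict to couplings with $\tilde W^{(j)}=W^{(j)}$ (forced here by the level-$1$ constraint in $\Mm_\star$), observe that the shared first level makes the level-$2$ discrepancy at grid points exactly the accumulated area discrepancy $\sum_{j<k}(A^{(j)}-\tilde B^{(j)})$, bound $\rho_{p\textup{-var}}$ below by the maximum of these, and invoke Proposition~\ref{p-davie-counter}. The paper phrases the cancellation as ``$\pi_2(\WW^h-\XX^h)$ lies in the centre $[\RR^d,\RR^d]$'' and leaves the Markov/Chebyshev passage implicit, whereas you spell out the Chen-relation iteration and the $L^q$ step; this is only a difference in level of detail, not in substance.
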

\begin{proof}
We apply Proposition \ref{p-davie-counter} and restrict ourselves to couplings with the underlying Gaussian increments of $A^{(j)}$ and $B^{(j)}$ are equal; that is, $\tilde{W}^{(j)}=W^{(j)}$ for all $j$. Since $\pi_1(\WW^h) = \pi_1(\XX^h)$, the difference $\pi_2\bra{\WW^h-\XX^h}$ lies in the centre $[\RR^d,\RR^d]$ of the Lie group $G^{(2)}(\RR^d)$ and is thus equal to the difference between the piecewise linear interpolations of the accumulated areas of $\WW^h$ and $\XX^h$. These accumulated areas correspond precisely to the piecewise linear interpolation of the random walks $\Chi^h$ and $\Theta^h$. The result then follows immediately from the upper bound provided by Proposition \ref{p-davie-counter}. Indeed, this latter proposition establishes a lower bound for all couplings of $A^{(j)}$ and $B^{(j)}$, while we are only interested couplings where the underlying Brownian increments equal. Certainly the lower bound still holds for this particular subset of couplings.  
\end{proof}

We use the notation $\Ww^*_q(\mu,\nu)$ and $\Ww^\star_q(\mu,\nu)$ to differentiate from the Wasserstein metric $\Ww_q(\mu,\nu)$ on $C\bra{[0,1],\RR^q}$ as defined for Theorem \ref{t-big}, (where $q=2$). Note that $\Ww^*_q(\cdot)$ is a legitimate Wasserstein metric on $[\RR^d,\RR^d]^{\times N}$, while $\Ww^\star_q(\cdot)$ is not a true metric on $G\Omega_p(\RR^d)$. For the quantity $\Ww^\star_q(\mu,\nu)$ to be well-defined we require that the set $\Mm_*(\mu,\nu)$ is non-empty, which will only be the case if we have the following equality of the pushforward measures:
\[
\bra{\pi_1}_*\bra{\mu} = \bra{\pi_1}_*\bra{\nu}.
\]
That is, the laws of the two rough path measures at level 1 must be equal for $\Ww^\star_q(\mu,\nu)$ to be well-defined. 
To our knowledge, \cite[\S3]{cass2013evolving} is the first paper to explicitly use the Wasserstein metric on the space of geometric $p$-rough paths by using the $p$-variation metric as a cost function. 

\begin{Remark}\label{r-complexity}
Let us return to considering the SDE defining L\'{e}vy area, (remaining in the case of $d=2$ for simplicity):
\begin{equation}\label{e-sde-levy-area}
dx_1 = dW_1, \gap dx_2 = dW_2, \gap dx_3 = \frac{1}{2}\bra{x_1\, dW_2 - x_2\, dW_1}.
\end{equation}
In common with the Milstein scheme, a simple calculation shows that the original log-ODE method is exact $x^{(j)}=A_{12}(0,jh) = \sum_{k=0}^{j-1} A^{(k)}$. 
Similarly, our new approximation scheme $\{x^h_j\}_{j=1}^N$ can be shown to satisfy $x^h_j = \sum_{k=0}^{j-1} B^{(k)}$. 
Thus Proposition \ref{p-davie-counter} implies that among all the couplings of $A^{(j)}$ and $B^{(j)}$ such that the underlying Brownian increments are equal (that is, $W^{(j)}=\tilde{W}^{(j)}$), the optimal rate of convergence of our approximation scheme in the Wasserstein metric (as defined in Theorem \ref{t-big}), must be at least worse than $O(-h\log h)$. Theorem \ref{t-big} gives a rate of convergence of $O(h^{1-2/\gamma-\varepsilon})$, where $\varepsilon>0$ is arbitrary and $\gamma$ is the degree of the Stein-Lipschitz norm of the vector fields of the original SDE. In the present case of (\ref{e-sde-levy-area}), the vector fields are polynomial and thus $\gamma$ can be chosen to be arbitrarily large. Therefore the Wasserstein rate is arbitrarily close to the optimal rate of convergence (up to a logarithmic factor). We make the disclaimer that while this argument is true from a theoretical point of view, increasing $\gamma$ will cause a corresponding exponential increase of the constant in the RDE Lipschitz-estimate needed to prove Theorem \ref{t-big}. 
\end{Remark}

Next, we focus on establishing H\"{o}lder bounds for our original coupling of $(\WW^h,\XX^h$) as provided by Proposition \ref{p-kmt-coupling}. 
First it can be shown that the commmon first level satisfies
\[
\abs{\norm{\pi_1(\WW^h_{s,t})}}_{L^q} = \abs{\norm{\pi_1(\XX^h_{s,t})}}_{L^q} = C(q)\abs{t-s}^{1/2}.
\]
Similarly, $\abs{\norm{\pi_2\bra{\WW^h_{s,t}}}}_{L^q},\, \abs{\norm{\pi_2\bra{\XX^h_{s,t}}}}_{L^q} \leq C(q)\abs{t-s}$ by scaling.
Thus a standard Kolmogorov regularity result (\cite[Theorem A.12]{FV}) implies that for all $\alpha\in [0,\frac{1}{2})$,
\[
\abs{\rho_{\alpha\textup{-H\"{o}l;}[0,1]}(\WW^h)}_{L^q},\, \abs{\rho_{\alpha\textup{-H\"{o}l;}[0,1]}(\XX^h)}_{L^q} \leq C=C(q).
\]
We now consider the H\"{o}lder norm between $\WW^h$ and $\XX^h$.
Using Proposition \ref{p-kmt-coupling} with the dyadic singleton set $E=\seq{jh}$ gives
\begin{equation}\label{e-holder-one}
\abs{\norm{\pi_2\bra{\WW^h_{s,t}-\XX^h_{s,t}}}}_{L^{5/2}} \leq C\abs{t-s}h^{-1}h = C\abs{t-s}, \gap \forall [s,t] \subseteq [jh,(j+1)h].
\end{equation}
Similarly, for larger increments $\abs{t-s}\geq h$ our coupling ensures that
\begin{equation}\label{e-holder-two}
\abs{\norm{\pi_2\bra{\WW^h_{s,t}-\XX^h_{s,t}}}}_{L^{5/2}}  
\leq \max_{\substack{E \subseteq \seq{0,1\ldots,2^m-1} : \\
2^m \cdot [s,t] \cap \seq{0,1,\ldots, 2^m-1} \subseteq E}} \abs{\norm{{\gamma}_E - \lambda_E}}_{L^{5/2}}
\leq Ch\log( h^{-1}).
\end{equation}
Thus, 
\[
\abs{\norm{\pi_2\bra{\WW^h_{s,t}-\XX^h_{s,t}}}}_{L^{5/2}} \leq C\bra{h\log(h^{-1}) \wedge \abs{t-s}}.
\]
Certainly the stochastic processes $\pi_1(\WW^h)=\pi_1(\XX^h)$ and $\pi_2(\WW^h)$ take their values in the 1st and 2nd inhomogeneous Wiener chaos $\Cc^{1}(\PP)$, $\Cc^2(\PP)$ respectively, (\cite[Proposition 15.20]{FV}). Recall the decompositon (\ref{e-davie-normal-area}):
\[
B^{(j)}_{kl} = z^{(j)}_k W^{(j)}_l - z^{(j)}_l W^{(j)}_k + \lambda^{(j)}_{kl}.
\]
That is, each $B^{(j)}$ is a quadratic polynomial of Gaussian random variables, (albeit with a complicated covariance structure with respect to the $\{A^{(j)}\}_{j=0}^{N-1}$ increments). Moreover, 
\[
\pi_2\bra{\XX^h_{jh,(j+1)h}} = \pi_2\bra{\WW^h_{jh,(j+1)h}} - A^{(j)} +B^{(j)}.
\]
Therefore $\pi_2(\XX^h)$ also takes values in the 2nd inhomogeneous Wiener chaos $\Cc^{2}(\PP)$. 
Combining the equivalence of $L^q$-norms on inhomogeneous Wiener chaos space (specifically \cite[Proposition 15.25]{FV}) with Proposition \ref{p-kmt-coupling} yields the following difference estimate.

\begin{Proposition}\label{p-coupling-lp}
There exists a constant $C>0$ such that for all $q\geq 1$, we have
\[
\abs{\norm{\pi_2\bra{\WW^h_{s,t}-\XX^h_{s,t}}}_{(\RR^d)^{\otimes 2}}}_{L^{q}} \leq Cq\bra{h\log(h^{-1}) \wedge \abs{t-s}}.
\]
\end{Proposition}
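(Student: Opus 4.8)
The plan is to bootstrap the $L^{5/2}$-estimate already in hand --- namely $\abs{\norm{\pi_2\bra{\WW^h_{s,t}-\XX^h_{s,t}}}}_{L^{5/2}} \le C\bra{h\log(h^{-1})\wedge\abs{t-s}}$, obtained in \eqref{e-holder-one}--\eqref{e-holder-two} from Proposition \ref{p-kmt-coupling} --- up to an $L^q$-estimate for every $q\ge 1$, at the cost of only a factor linear in $q$. The linear dependence (rather than an exponential blow-up in $q$) is the whole point, and it is forced by the fact that the random variable in question lives in a Wiener chaos of fixed order $2$.

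First I would record that $Z_{s,t} := \pi_2\bra{\WW^h_{s,t}} - \pi_2\bra{\XX^h_{s,t}}$ has every coordinate, in the basis $\seq{e_k\otimes e_l}$ of $(\RR^d)^{\otimes 2}$, in the second inhomogeneous Wiener chaos $\Cc^2(\PP)$ of the driving Brownian motion $W$. Both the required facts were already noted in the discussion preceding the statement: $\pi_2(\WW^h)$ lies in $\Cc^2(\PP)$ since it is assembled from Brownian increments and L\'{e}vy areas, while $\pi_2(\XX^h)$ lies in $\Cc^2(\PP)$ because $\pi_2\bra{\XX^h_{jh,(j+1)h}} = \pi_2\bra{\WW^h_{jh,(j+1)h}} - A^{(j)} + B^{(j)}$ and each $B^{(j)}$ is, by \eqref{e-davie-normal-area}, a quadratic polynomial of Gaussians, the conclusion for a general increment $[s,t]$ following from Chen's relation. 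A difference of two $\Cc^2(\PP)$-valued random variables is again $\Cc^2(\PP)$-valued.

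Next I would invoke the equivalence of $L^q$-norms on the second inhomogeneous chaos, that is \cite[Proposition 15.25]{FV} (ultimately Nelson's hypercontractivity applied to the homogeneous components of order $0,1,2$ together with the triangle inequality): for $Y\in\Cc^2(\PP)$ and $q\ge 5/2$ one has $\abs{Y}_{L^q} \le Cq\,\abs{Y}_{L^{5/2}}$ with $C$ absolute. Applying this coordinate-by-coordinate to $Z_{s,t}$ and summing the at most $d^2$ coordinates yields $\abs{\norm{Z_{s,t}}_{(\RR^d)^{\otimes 2}}}_{L^q} \le Cq\,\abs{\norm{Z_{s,t}}_{(\RR^d)^{\otimes 2}}}_{L^{5/2}}$ for $q\ge 5/2$; for $1\le q\le 5/2$ the same bound is immediate from monotonicity of $L^q$-norms and $q\ge 1$. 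Combining with the $L^{5/2}$-estimate recalled in the first paragraph gives the claim.

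The main obstacle is making precise that the moment-equivalence constant on $\Cc^2(\PP)$ is genuinely $O(q)$ and uniform in the base exponent $5/2$; this quantitative form of hypercontractivity is exactly what is needed, since the linear-in-$q$ growth is what later permits a Kolmogorov / Garsia--Rodemich--Rumsey type argument to convert these pointwise $L^q$-bounds into the $p$-variation and H\"{o}lder estimates for the coupled rough paths used in the sequel. Everything else --- the chaos membership of $Z_{s,t}$ and the passage between the $(\RR^d)^{\otimes 2}$-norm and its finitely many coordinates --- is routine bookkeeping.
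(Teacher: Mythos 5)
Your proposal is correct and follows essentially the same route as the paper: both note that $\pi_2(\WW^h_{s,t})$ and $\pi_2(\XX^h_{s,t})$ lie in the second inhomogeneous Wiener chaos $\Cc^2(\PP)$, and then combine the $L^{5/2}$-bound from Proposition~\ref{p-kmt-coupling} (via \eqref{e-holder-one}--\eqref{e-holder-two}) with the equivalence of $L^q$-norms on Wiener chaos from \cite[Proposition 15.25]{FV} to obtain the linear-in-$q$ constant. The only difference is that you spell out the coordinate-by-coordinate reduction and the trivial case $1\le q\le 5/2$, which the paper leaves implicit.
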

For further details on the integrability of Wiener chaos expansions we refer to  \cite[Proposition 3]{friz2014convergence}, \cite[Exercise 13.6, Theorem D.8]{FV}, and \cite{riedel2013simple}.

\begin{Lemma}\label{l-holder-bound}
For all $q\geq 1$, there exists a constant $C=C(q)$ such that for all $\alpha \in [0,\frac{\theta}{2})$ we have 
\[
\abs{\rho_{\alpha\textup{-H\"{o}l;}[0,1]}\bra{\WW^h,\XX^h}}_{L^q} 
\leq C\bra{h\log(h^{-1})}^{1-\theta}. 
\]
\end{Lemma}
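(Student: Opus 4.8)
The plan is to upgrade the pointwise moment estimate of Proposition~\ref{p-coupling-lp} to a H\"{o}lder bound, via interpolation followed by a Kolmogorov-type continuity criterion.

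First I would reduce to the second level. Since $\WW^h$ and $\XX^h$ are constructed from the same first-level increments ($W^{(j)}=\tilde W^{(j)}$ for every $j$), their first levels agree, so $\rho^{(1)}_{\alpha\textup{-H\"{o}l;}[0,1]}(\WW^h,\XX^h)=0$ and $\rho_{\alpha\textup{-H\"{o}l;}[0,1]}(\WW^h,\XX^h)=\rho^{(2)}_{\alpha\textup{-H\"{o}l;}[0,1]}(\WW^h,\XX^h)$; it therefore suffices to bound the $L^q$-norm of $\sup_{0\le s<t\le1}\abs{t-s}^{-2\alpha}\,\norm{\pi_2(\WW^h_{s,t}-\XX^h_{s,t})}_{(\RR^d)^{\otimes2}}$. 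I would then note that $Z_{s,t}:=\pi_2(\WW^h_{s,t}-\XX^h_{s,t})$ is genuinely additive: subtracting the Chen identity $\pi_2(\XX_{s,u})=\pi_2(\XX_{s,t})+\pi_2(\XX_{t,u})+\pi_1(\XX_{s,t})\otimes\pi_1(\XX_{t,u})$ written for $\WW^h$ and for $\XX^h$, the bilinear cross terms cancel because the first levels coincide. Hence $Z_{s,t}=Z_t-Z_s$ for the $[\RR^d,\RR^d]$-valued path $Z_t:=\pi_2(\WW^h_{0,t})-\pi_2(\XX^h_{0,t})$, and the required estimate reduces to an ordinary Kolmogorov--Chentsov bound for $Z$.

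The key step is the interpolation. Proposition~\ref{p-coupling-lp} supplies $\abs{\norm{Z_t-Z_s}}_{L^q}\le Cq\bra{h\log(h^{-1})\wedge\abs{t-s}}$ for every $q\ge1$; applying the elementary inequality $a\wedge b\le a^{1-\theta}b^{\theta}$ (valid for $a,b\ge0$ and $\theta\in[0,1]$) with $a=h\log(h^{-1})$ and $b=\abs{t-s}$ turns this into $\abs{\norm{Z_t-Z_s}}_{L^q}\le Cq\bra{h\log(h^{-1})}^{1-\theta}\abs{t-s}^{\theta}$, and hence $\EE\bra{\norm{Z_t-Z_s}^q}\le(Cq)^q\bra{h\log(h^{-1})}^{(1-\theta)q}\abs{t-s}^{\theta q}$. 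I would then feed this bound into the quantitative Kolmogorov continuity theorem, in the form of \cite[Theorem A.12]{FV}: having fixed $\alpha\in[0,\theta/2)$, I choose $q$ large enough that $2\alpha<\theta-1/q$ --- possible precisely because $2\alpha<\theta$ --- and the theorem then produces a constant $C=C(q,\alpha,\theta)$ such that $\abs{\sup_{s<t}\abs{t-s}^{-2\alpha}\norm{Z_t-Z_s}}_{L^q}\le C\bra{h\log(h^{-1})}^{1-\theta}$. Since this holds for all large $q$, the remaining cases $q\in[1,q_0)$ follow from the monotonicity $\abs{\cdot}_{L^q}\le\abs{\cdot}_{L^{q_0}}$, and combined with Step~1 this yields the claim.

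I do not expect a genuine obstacle, since all the probabilistic content is already packaged in Proposition~\ref{p-coupling-lp}. The points that need a little care are: (i) the cross-term cancellation making $Z$ additive, which is what allows one to use the ordinary Kolmogorov lemma rather than its rough-path variant; (ii) the factor $\abs{t-s}^{2\alpha}$ in the level-$2$ inhomogeneous seminorm $\rho^{(2)}_{\alpha\textup{-H\"{o}l;}[0,1]}$, which is exactly why the interpolation exponent $\theta$ only buys H\"{o}lder regularity up to $\alpha<\theta/2$ at the rough-path level; and (iii) the routine bookkeeping of the $q$-dependence of the constants when passing from large $q$ to all $q\ge1$.
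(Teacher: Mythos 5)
Your proof is correct and proceeds by the same three moves as the paper's: reduce to the second level using the fact that the first levels of $\WW^h$ and $\XX^h$ agree, interpolate the bound of Proposition~\ref{p-coupling-lp} via $a\wedge b\leq a^{1-\theta}b^{\theta}$, and then invoke a Kolmogorov-type continuity criterion. The one genuine refinement you make is point~(i): you observe that the Chen cross terms cancel because $\pi_1(\WW^h)=\pi_1(\XX^h)$, so the $[\RR^d,\RR^d]$-valued process $Z_{s,t}=\pi_2(\WW^h_{s,t}-\XX^h_{s,t})$ is additive, $Z_{s,u}=Z_{s,t}+Z_{t,u}$. This justifies citing the \emph{ordinary} Kolmogorov criterion (Theorem A.12 of \cite{FV}) rather than the rough-path version (Theorem A.13) the paper cites. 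The paper's choice is the more uniform one --- it is the same tool reused in Theorem~\ref{t-main-bound}, where higher tensor levels genuinely do not telescope --- while your observation shows why, at level~$2$ with matched first levels, the reduction is clean and elementary. Both give the stated bound, and your bookkeeping ($q$ large to ensure $2\alpha<\theta-1/q$, then monotonicity of $L^q$ norms) fills in details the paper leaves implicit.
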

\begin{proof}
Since $\WW^h$ and $\XX^h$ share a common first level, 
\[
\rho_{\alpha\textup{-H\"{o}l;}[0,1]}\bra{\WW^h,\XX^h}
=
\rho^{(2)}_{\alpha\textup{-H\"{o}l;}[0,1]}\bra{\WW^h,\XX^h}
=
\sup_{0\leq s < t \leq 1} \frac{\norm{\pi_2(\WW^h_{s,t}-\XX^h_{s,t})}_{(\RR^d)^{\otimes 2}}}{\abs{t-s}^{2\alpha}}
\]
Combining (\ref{e-holder-one}) and (\ref{e-holder-two}) guarantees that for every $\theta\in [0,1]$,
\begin{align*}
\abs{\norm{\pi_2\bra{\WW^h_{s,t} - \XX^h_{s,t}}}}_{L^q} 
\leq C \bra{ h \log(h^{-1}) \wedge \abs{t-s}}
\leq C\bra{h\log(h^{-1})}^{1-\theta} \abs{t-s}^\theta.
\end{align*}
Appealing to a standard Kolomogorov result for rough paths (\cite[Theorem A.13]{FV}), we arrive at the claim.
\end{proof}

For our approximation $\XX^h$ of $\WW^h$ to be of any use in RDE Lipschitz estimates, we need the previous quantity $(h\log(h^{-1}))^{1-\theta}$ to be less than $O(\sqrt{h})$, or else we might as well have used the level-$1$ log-ODE method which has order $O(\sqrt{h})$, and in common with $\XX^h$, neither needs L\'{e}vy area increments. 
However this requires that $\theta<\frac{1}{2}$, which in turn demands that $\alpha \in [0,\frac{1}{4})$. 
Unfortunately for $\alpha < \frac{1}{4}$, $\rho_{\alpha\textup{-H\"{o}l}}(\cdot)$ is no longer a rough path metric for $2$-rough paths.   
Therefore to make the  H\"{o}lder bound of Lemma \ref{l-holder-bound} useful at all we would need to not only compute \textit{but control} the $p$-variation, (where $p=\alpha^{-1}>4$), of at least the first 4 levels of the lifts $S_\kappa(\WW^h)$ and $S_\kappa(\XX^h)$, ($\kappa\geq 4$), and the corresponding difference at each tensor level: 
\[
\max_{k=2,\ldots,\kappa} \abs{\norm{\pi_k\bra{S_\kappa(\WW^h)_{s,t}-S_\kappa(\XX^h)_{s,t}}}_{(\RR^d)^{\otimes k}}}_{L^q}.
\]
This is precisely what we will establish in the next section. 
Importantly, we know from Remark \ref{r-eq} that lifting the piecewise abelian rough paths $\WW^h$, $\XX^h$ does not change the RDE defining the log-ODE method and our approximation scheme. 

\begin{Remark}
This technique of lifting the rough paths $\WW^h$ and $\XX^h$ to higher levels $\kappa>2$ in order to get a better $p$-variation distance bound is inspired by the same technique for dealing with the convergence rates of Gaussian rough paths by Friz, Riedel and Xu in \cite{friz2014convergence,riedel2013simple} (see Remark 5.2 in the latter and the first remark after Corollary 1 of the former paper). The basic idea is that by lifting the driving rough path to a higher level, $p$ is allowed to be larger, and hence a better estimate may be hoped for. In order for the RDE to still be well-defined and have a unique solution, we are penalised by requiring that the Stein-Lipschitz order of the vector fields is of order $\gamma>p$. In their particular application, the previous authors recover (almost) optimal rates of convergence for Wong-Zakai approximations of RDEs driven by various Gaussian rough path lifts, including fractional Brownian motion with Hurst index $H \in (\frac{1}{3},1]$, (see the final remark after Corollary 1 of \cite{friz2014convergence}).
\end{Remark}


\section{Lifted rough path estimates}\label{s-lifted-estimates}

The aim of this section is to prove that the lifting trick discussed in the previous section will actually work and give the inhomogeneous $p$-variation estimates needed to establish Corollary \ref{c-main-2} in the next section. This is the content of the final result of this section, Theorem \ref{t-main-bound}. 

As our first step in proving Theorem \ref{t-main-bound}, we consider an error estimate for our lifted rough path coupling. 

\begin{Proposition}\label{p-main-estimate-prop}
Fix an integer $\kappa>2$. Then for all $q\geq 1$ there exists a constant $C=C(\kappa,q)$ such that for each integer $ 2 \leq m \leq \kappa$:
\[
\abs{\norm{\pi_m\bra{S_\kappa(\WW^h)_{s,t}-S_\kappa(\XX^h)_{s,t}}}_{(\RR^d)^{\otimes m}}}_{L^q} \leq C \abs{t-s}^{\frac{m}{2}-1}\bra{h\log(h^{-1}) \wedge \abs{t-s}} \gap \forall\, s<t \in [0,1].
\]
\end{Proposition}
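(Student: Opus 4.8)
The plan is to reduce to a single moment exponent, expand $\pi_m$ of the difference using Chen's relation together with the exponential structure of the two piecewise abelian lifts, and induct on $m$. Abbreviate $\mathbb{U}:=S_\kappa(\WW^h)$ and $\mathbb{V}:=S_\kappa(\XX^h)$, and recall from Section~\ref{s-rough-view} that these are the piecewise abelian $\kappa$-rough paths whose Lie increments are $\hat\xi^{(j)}=W^{(j)}+A^{(j)}$ and $\hat\eta^{(j)}=W^{(j)}+B^{(j)}$. Every tensor level of $\mathbb{U}$ or $\mathbb{V}$, over any subinterval, is then a polynomial of degree at most $m$ in the underlying Gaussian family --- for $\mathbb{U}$ a Wiener chaos element of order $\le m$, since $A^{(j)}\in\Cc^2(\PP)$ --- so $\pi_m\bra{\mathbb{U}_{s,t}-\mathbb{V}_{s,t}}\in\bigoplus_{\ell\le m}\Cc^\ell(\PP)$. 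By the equivalence of all $L^q$-norms on fixed-order inhomogeneous Wiener chaos (\cite[Proposition~15.25]{FV}) it therefore suffices to prove the bound for one exponent --- say $q=5/2$, the one supplied by Proposition~\ref{p-kmt-coupling} --- the general case following with a constant $C(\kappa,q)$. Finally, a partial end-piece of $[s,t]$ lying in a single grid interval has length $\le h$ and so contributes only $O(h^{m/2})\le O\bra{|t-s|^{m/2-1}(h\wedge|t-s|)}$ by the local estimate below, so we may take $[s,t]$ grid-aligned; and since a grid-aligned interval is a disjoint union of at most $\log_2 N=\log_2(h^{-1})$ dyadic blocks, it is enough to prove the estimate with $h$ in place of $h\log(h^{-1})$ over a single dyadic block and then add the blocks --- exactly the mechanism creating the logarithmic factor in Proposition~\ref{p-kmt-coupling}.

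For the expansion, telescope the Chen factorisations $\mathbb{U}_{s,t}=\mathbb{U}_{v_0,v_1}\otimes\cdots\otimes\mathbb{U}_{v_{n-1},v_n}$ and $\mathbb{V}_{s,t}=\mathbb{V}_{v_0,v_1}\otimes\cdots\otimes\mathbb{V}_{v_{n-1},v_n}$ over the grid points $v_0=s<\cdots<v_n=t$ to obtain $\mathbb{U}_{s,t}-\mathbb{V}_{s,t}=\sum_{j=1}^{n}\mathbb{V}_{s,v_j}\otimes\varepsilon_j\otimes\mathbb{U}_{v_j,t}$, where $\varepsilon_j:=\mathbb{V}_{v_{j-1},v_j}^{-1}\otimes\mathbb{U}_{v_{j-1},v_j}-1$. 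The iterated Baker--Campbell--Hausdorff formula (the Appendix) gives $\varepsilon_j=\exp_\kappa\bra{\textup{BCH}(-(W^{(j)}+B^{(j)}),\,W^{(j)}+A^{(j)})}-1$; since the two Lie arguments differ only by the pure level-$2$ element $\Delta^{(j)}:=A^{(j)}-B^{(j)}$, every bracket monomial in this series carries a factor $\Delta^{(j)}$, so that $\pi_1(\varepsilon_j)=0$, $\pi_2(\varepsilon_j)=\Delta^{(j)}$, and $\abs{\norm{\pi_\ell(\varepsilon_j)}}_{L^{5/2}}\le Ch^{\ell/2}$ for $2\le\ell\le\kappa$ (one factor $\Delta^{(j)}$ of size $O(h)$, the remaining factors of size $O(\sqrt h)$); in particular this yields the local bound $\abs{\norm{\pi_\ell\bra{\mathbb{U}_{v,v'}-\mathbb{V}_{v,v'}}}}_{L^{5/2}}\le C|v'-v|^{\ell/2}$ for $[v,v']$ inside a grid interval, used above. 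Projecting to level $m$ and expanding the neighbouring increments into their tensor levels, $\pi_m\bra{\mathbb{U}_{s,t}-\mathbb{V}_{s,t}}$ becomes a sum --- of length bounded by a function of $\kappa$ alone --- of terms $\sum_j P_j^{(a)}\otimes\pi_\ell(\varepsilon_j)\otimes Q_j^{(b)}$ with $a+\ell+b=m$, $\ell\ge2$, where $P_j^{(a)}=\pi_a(\mathbb{V}_{s,v_j})$ and $Q_j^{(b)}=\pi_b(\mathbb{U}_{v_j,t})$ have $L^q$-norms of order $|t-s|^{a/2}$ and $|t-s|^{b/2}$ by scaling. The argument now runs by induction on $m$: the base case $m=2$ is Proposition~\ref{p-coupling-lp}, and for $m\ge3$ the terms with $\ell\ge3$ carry the small factor $h^{\ell/2}\le h^{3/2}$, while whenever one of $P_j^{(a)},Q_j^{(b)}$ must be compared between different grid intervals, Chen's relation produces factors $\pi_{a'}\bra{\mathbb{U}_{\cdot,\cdot}-\mathbb{V}_{\cdot,\cdot}}$ with $a'<m$, covered by the inductive hypothesis.

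The hard part will be the terms with $\ell=2$, i.e. $\sum_j P_j^{(a)}\otimes\Delta^{(j)}\otimes Q_j^{(b)}$ with $a+b=m-2$: a term-by-term estimate discards the cancellation in Davie's coupling and yields only a rate of order $\sqrt h$, whereas the statement requires $h\log(h^{-1})$. The remedy is to keep the $\Delta^{(j)}$ grouped and resum over all dyadic scales: decompose the index range into its dyadic sub-blocks, and on each block $E$ write $P_j^{(a)}=P_{j_E}^{(a)}+\bra{P_j^{(a)}-P_{j_E}^{(a)}}$ (and likewise $Q_j^{(b)}$) for a reference index $j_E\in E$, so that the residual sum is $\sum_{j\in E}\Delta^{(j)}=\pi_2\bra{\mathbb{U}_{I_E}-\mathbb{V}_{I_E}}$, bounded by $Ch$ for dyadic $E$ and $Ch\log(h^{-1})$ in general by Proposition~\ref{p-kmt-coupling}. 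The oscillation $\norm{P_j^{(a)}-P_{j_E}^{(a)}}$ of a level-$a$ increment over a block of time-length $\ell$ is $O\bra{\ell^{1/2}|t-s|^{(a-1)/2}}$ by Chen's relation and Brownian scaling, there are $\lesssim|t-s|/\ell$ dyadic blocks of time-length $\ell$, and summing the resulting geometric series over $\ell\in\{h,2h,\dots,|t-s|\}$ --- together with the mutual orthogonality of the Brownian increments inside $P_j^{(a)},Q_j^{(b)}$ --- telescopes the total to $|t-s|^{m/2-1}\bra{h\log(h^{-1})\wedge|t-s|}$. Performing this resummation uniformly in $m\le\kappa$ and fitting it to the induction above is the technical core; everything else is the bookkeeping of Chen's relation and the $\exp_\kappa$/BCH expansion, plus the Wiener-chaos $L^q$-equivalence used to pass from $q=5/2$ to general $q$.
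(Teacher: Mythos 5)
The high-level outline here is sound and several of your moves coincide with what the paper does: restricting to grid-aligned intervals after a scaling argument for the sub-$h$ case; observing that the quantity in question lives in a fixed-order inhomogeneous Wiener chaos and hence one exponent suffices (the paper uses exactly this equivalence, \cite[Proposition~15.25]{FV}, but one level lower, in the proof of Proposition~\ref{p-coupling-lp}); and recognising that the increment difference $\varepsilon_j = \mathbb{V}_{v_{j-1},v_j}^{-1}\otimes\mathbb{U}_{v_{j-1},v_j}-1$ vanishes at level $1$, equals $\Delta^{(j)}=A^{(j)}-B^{(j)}$ at level $2$, and decays like $h^{\ell/2}$ at level $\ell$. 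Your choice of expansion is, however, genuinely different from the paper's: you telescope in the group via Chen's relation, obtaining $\mathbb{U}_{s,t}-\mathbb{V}_{s,t}=\sum_j \mathbb{V}_{s,v_{j-1}}\otimes\varepsilon_j\otimes\mathbb{U}_{v_j,t}$, whereas the paper telescopes at the level of log-signatures over the whole interval (Lemma~\ref{l-useful}), bounding $\norm{g_i-h_i}_{L^q}$ for $g_i=\pi_i(\log S_\kappa(\WW^h)_{0,nh})$, $h_i=\pi_i(\log S_\kappa(\XX^h)_{0,nh})$ via the iterated BCH formula (Lemma~\ref{l-main-moment-estimates}).

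The trouble is the step you yourself flag as ``the technical core,'' which is precisely the place where the proof has to beat the naive $\sqrt{h}$ rate, and it is not carried out. Three concrete concerns. First, your resummation over dyadic scales relies on ``mutual orthogonality of the Brownian increments inside $P_j^{(a)},Q_j^{(b)}$'' together with cancellation coming from the coupling, but neither is justified: the coupled $\Delta^{(j)}$ are \emph{not} conditionally independent --- Davie's dyadic construction introduces correlations across blocks --- and they are not independent of the Brownian increments feeding into $P_j^{(a)}$ and $Q_j^{(b)}$ either (the coupling is performed conditionally on $\{W^{(k)}\}_k$). The paper's substitute is Lemma~\ref{l-symmetry}, a sign-symmetry argument which pairs only the Brownian increment factors by conditioning on the $\sigma$-algebra generated by all areas, and then handles the remaining area factors by Hölder; you have no analogue of this pairing argument. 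Second, your induction on $m$ is spurious: $P_j^{(a)}=\pi_a(\mathbb{V}_{s,v_{j-1}})$ and $Q_j^{(b)}=\pi_b(\mathbb{U}_{v_j,t})$ come from $\mathbb{V}$ and $\mathbb{U}$ separately, so comparing them across grid points produces increments of a single rough path, not differences $\pi_{a'}(\mathbb{U}-\mathbb{V})$, and the inductive hypothesis as stated is never actually invoked. Third --- and this is where the paper's choice of expansion really earns its keep --- the log-signature route gives, through the iterated BCH formula, that the summation set over the ``free'' $z$-index is automatically a union of boundedly many blocks of \emph{consecutive} indices, so that $\sum_j z_j$ is recognisable as $\pi_2(\WW^h_{s',t'}-\XX^h_{s',t'})$ for grid-aligned $(s',t')$ and hence controlled by Proposition~\ref{p-coupling-lp}; in the Chen-level telescoping the corresponding regrouping has to be manufactured by hand via your dyadic decomposition, and the bookkeeping needed to show the geometric series of oscillation errors telescopes \emph{uniformly in $m\le\kappa$} while preserving orthogonality is exactly what is missing.
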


Before giving the proof we present some useful technical results, beginning with the following lemma (cf. \S3 of \cite{lyons2007extension}).

\begin{Lemma}\label{l-new-log}
Fix an integer $\kappa>2$, $p\in (2,3)$ and suppose $\XX\in C^{1/p\textup{-H\"{o}l}}\bra{[0,1],G^{(2)}(\RR^d)}$. Then there exists a constant $C=C(\kappa,p)$ such that for all $k=1,\ldots,\kappa$, 
\[
\norm{\pi_k\bra{\log S_\kappa\bra{\XX}_{s,t}}}_{(\RR^d)^{\otimes k}} \leq C\abs{t-s}^{k/p}\norm{\XX}_{1/p\textup{-H\"{o}l;}[s,t]}^k.
\]
\end{Lemma}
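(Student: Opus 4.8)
The plan is to route the estimate through the higher-level lift $S_\kappa(\XX)\in C^{1/p\textup{-H\"{o}l}}\bra{[0,1],G^{(\kappa)}(\RR^d)}$ and then extract the log-signature from the signature by the (finite) truncated-logarithm power series. First I would record a H\"{o}lder bound on each homogeneous component of the lift. By definition of the homogeneous H\"{o}lder norm, $\norm{\XX_{s,t}}_{C}\leq \abs{t-s}^{1/p}\norm{\XX}_{1/p\textup{-H\"{o}l;}[s,t]}$. Lyons' extension theorem, in the quantitative form of \cite[Theorem 9.5]{FV}, provides the lift $S_\kappa(\XX)$ together with a constant $C=C(\kappa,p)$ such that $\norm{S_\kappa(\XX)}_{1/p\textup{-H\"{o}l;}[s,t]}\leq C\norm{\XX}_{1/p\textup{-H\"{o}l;}[s,t]}$, and hence $\norm{S_\kappa(\XX)_{s,t}}_{C}\leq C\abs{t-s}^{1/p}\norm{\XX}_{1/p\textup{-H\"{o}l;}[s,t]}$. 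Since the projection $\pi_m$ onto tensor level $m$ is homogeneous of degree $m$ under the dilations and all homogeneous norms on the nilpotent group $G^{(\kappa)}(\RR^d)$ are equivalent (see \cite{FV}), there is $C=C(\kappa)$ with $\norm{\pi_m(g)}_{(\RR^d)^{\otimes m}}\leq C\norm{g}_{C}^m$ for every $g\in G^{(\kappa)}(\RR^d)$. Combining these yields, for each $m=1,\ldots,\kappa$,
\[
\norm{\pi_m\bra{S_\kappa(\XX)_{s,t}}}_{(\RR^d)^{\otimes m}} \leq C\abs{t-s}^{m/p}\norm{\XX}_{1/p\textup{-H\"{o}l;}[s,t]}^m.
\]

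Next I would expand the log-signature. Writing $a:=S_\kappa(\XX)_{s,t}-1=\sum_{m\geq 1}\pi_m\bra{S_\kappa(\XX)_{s,t}}$, which has no component of degree $0$, the truncated logarithm gives $\log S_\kappa(\XX)_{s,t}=\sum_{j=1}^{\kappa}\frac{(-1)^{j+1}}{j}\,a^{\otimes j}$, and projecting onto level $k$ produces the finite sum
\[
\pi_k\bra{\log S_\kappa(\XX)_{s,t}} = \sum_{j=1}^{k}\frac{(-1)^{j+1}}{j}\sum_{\substack{i_1+\cdots+i_j=k\\ i_\ell\geq 1}} \pi_{i_1}\bra{S_\kappa(\XX)_{s,t}}\otimes\cdots\otimes\pi_{i_j}\bra{S_\kappa(\XX)_{s,t}}.
\]
Using the compatibility relation $\norm{b\otimes c}_{(\RR^d)^{\otimes(i+i')}}=\norm{b}_{(\RR^d)^{\otimes i}}\norm{c}_{(\RR^d)^{\otimes i'}}$ of the tensor norms together with the previous display, each summand is bounded by $C^{j}\abs{t-s}^{k/p}\norm{\XX}_{1/p\textup{-H\"{o}l;}[s,t]}^{k}$, since the exponents of $\abs{t-s}$ and of the H\"{o}lder norm add up to $k$ (as $i_1+\cdots+i_j=k$). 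The number of summands is the number of compositions of $k$ into at most $\kappa$ positive parts, which is bounded by a constant depending only on $\kappa$. Summing and absorbing the constants $\tfrac1j C^j$ (with $j\leq \kappa$) gives
\[
\norm{\pi_k\bra{\log S_\kappa(\XX)_{s,t}}}_{(\RR^d)^{\otimes k}} \leq C(\kappa,p)\,\abs{t-s}^{k/p}\norm{\XX}_{1/p\textup{-H\"{o}l;}[s,t]}^{k},
\]
which is the claim (the case $k=1$ reduces to $\pi_1\bra{\log S_\kappa(\XX)_{s,t}}=\pi_1(\XX_{s,t})$ and the bound $\norm{\pi_1(g)}\leq\norm{g}_C$).

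I do not expect a genuine obstacle here; the proof is a standard algebraic bookkeeping argument. The only points requiring care are (i) keeping the \emph{restricted} H\"{o}lder norm $\norm{\cdot}_{1/p\textup{-H\"{o}l;}[s,t]}$ throughout, so that the stated local estimate (rather than its global counterpart on $[0,1]$) is obtained, and (ii) verifying that the extension-theorem constant and the homogeneous-norm-equivalence constant depend only on $\kappa$ and $p$ — both being classical facts about the free step-$\kappa$ nilpotent Lie group.
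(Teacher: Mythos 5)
Your proof is correct but routes the estimate differently from the paper. The paper observes (citing Exercise 7.38 and Theorem 7.44 of \cite{FV}) that $g\mapsto \max_{m=1,\ldots,\kappa}\norm{\pi_m(\log g)}^{1/m}$ is itself a homogeneous norm on $G^{(\kappa)}(\RR^d)$; by equivalence of homogeneous norms this immediately gives $\norm{\pi_k(\log S_\kappa(\XX)_{s,t})}\leq C\norm{S_\kappa(\XX)_{s,t}}_{C}^k$ with no expansion of the logarithm required, and then finishes by the Lipschitz-continuity of $S_\kappa$ in $p$-variation and the control super-additivity bound $\norm{\XX}_{p\text{-var;}[s,t]}\leq\abs{t-s}^{1/p}\norm{\XX}_{1/p\text{-H\"{o}l;}[s,t]}$. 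You instead apply homogeneous-norm equivalence to the signature levels $\pi_m(S_\kappa(\XX)_{s,t})$ and then expand the truncated logarithm $\sum_{j\leq k}\frac{(-1)^{j+1}}{j}a^{\otimes j}$ explicitly, controlling each composition $i_1+\cdots+i_j=k$ via the multiplicativity of the tensor norm. Both approaches ultimately rest on the same two pillars — equivalence of homogeneous norms on the free nilpotent group and the uniqueness/Lipschitz estimate for the lift $S_\kappa$ — but the paper's choice of the log-based homogeneous norm sidesteps the combinatorial bookkeeping you carry out. Your route, while a bit longer, has the minor advantage of making the finite sum over compositions and the bound on their number completely explicit, and of working directly with H\"{o}lder norms rather than passing through $p$-variation. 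The only caveat worth noting: where you invoke \cite[Theorem 9.5]{FV} to control $\norm{S_\kappa(\XX)}_{1/p\text{-H\"{o}l;}[s,t]}$, you should make sure you are quoting the H\"{o}lder form of the extension estimate (the paper states it only in $p$-variation form and then converts); this is standard but should be flagged.
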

\begin{proof}
The function 
\[
g \in G^{(\kappa)}(\RR^d) \mapsto \max_{m=1,\ldots,\kappa} \norm{\pi_m\bra{\log g}}_{(\RR^d)^{\otimes m}}^{1/m}
\]
is a homogeneous norm on $G^{(\kappa)}(\RR^d)$ (cf. Exercise 7.38 of \cite{FV}). 
Therefore by the equivalence of such norms on $G^{(\kappa)}(\RR^d)$, (\cite[Theorem 7.44]{FV}), there exists a constant $C=C(\kappa)\geq 1$ such that
\[
\norm{\pi_k\bra{\log S_\kappa(\XX)_{s,t}}} \leq \bra{\max_{m=1,\ldots,\kappa} \norm{\pi_m\bra{\log S_\kappa(\XX)_{s,t}}}^{1/m}}^k \leq C\norm{S_\kappa(\XX)_{s,t}}_{{C}}^k \leq C\norm{S_\kappa(\XX)}_{p\textup{-var;}[s,t]}^k.
\]
The Lipschitz-continuity of the rough path lift in $p$-variation guarantees the existence of a constant $C=C(\kappa,p)$ such that $\norm{S_\kappa(\XX)}_{p\textup{-var;}[s,t]} \leq C\norm{\XX}_{p\textup{-var;}[s,t]}$. Moreover, a simple consequence of the super-additivity of controls (cf. \cite[\S8.1]{FV}) gives $\norm{\XX}_{p\textup{-var;}[s,t]} \leq \abs{t-s}^{1/p} \norm{\XX}_{1/p\textup{-H\"{o}l;}[s,t]}$. Putting the last three inequalities together completes the proof. 
\end{proof}

We apply Lemma \ref{l-useful}  with 
\begin{equation}\label{e-x-notation}
x_{j+1}=\xi^{(j)}=W^{(j)}+A^{(j)}, \gap y_{j+1}=\eta^{(j)}=W^{(j)}+B^{(j)}, \textup{ for } j=0,1,\ldots,N-1.
\end{equation} 
Then exploiting the compatibility of the tensor algebra norm, we have for all $m\leq \kappa$:
\begin{align*}
&\norm{\pi_m\bra{S_\kappa(\WW^h)_{0,nh} - S_\kappa(\XX^h)_{0,nh}}}_{(\RR^d)^{\otimes m}}\\ 
&= \norm{\pi_m\bra{e^{x_1} \otimes \ldots \otimes e^{x_{n}} - e^{y_1} \otimes \ldots \otimes e^{y_n}}}_{(\RR^d)^{\otimes m}}\\
&\leq \sum^m_{k=1} \frac{1}{k!}\sum_{\substack{i_1,\ldots,i_k > 0\\i_1+\ldots+i_k=m}} \norm{ \sum^k_{j=1} g_{i_1} \otimes \ldots \otimes  g_{i_{j-1}} \otimes (g_{i_{j}} - h_{i_j}) \otimes h_{i_{j+1}} \otimes \ldots \otimes h_{i_k} }_{(\RR^d)^{\otimes m}}\\
&\leq \sum^m_{k=1} \frac{1}{k!}\sum_{\substack{i_1,\ldots,i_k > 0\\i_1+\ldots+i_k=m}} \sum^k_{j=1} \norm{  g_{i_1} \otimes \ldots \otimes  g_{i_{j-1}}} \norm{ g_{i_{j}} - h_{i_j}} \norm{ h_{i_{j+1}} \otimes \ldots \otimes h_{i_k} }
\end{align*}
where 
\begin{align*}
g_{i} :&= \pi_i\bra{ H(x_1,\ldots,x_n)} = \pi_i\bra{\log S_\kappa(\WW^h)_{0,nh}},\notag\\
h_{i} :&= \pi_i\bra{ H(y_1,\ldots,y_n)} = \pi_i\bra{\log S_\kappa(\XX^h)_{0,nh}} \in \mathfrak{g}^{(k)}(\RR^d).\label{e-log-sig-def}
\end{align*}
Since the increments of $\WW^h$ and $\XX^h$ agree, $g_1-h_1=0$ and 
\[
g_2 - h_2 = \sum^{n-1}_{j=0} (A^{(j)}-B^{(j)}) \in [\RR^d,\RR^d].
\]
Moreover, by Lemma \ref{l-new-log}, given $p\in (2,3)$, there exists some constant $C=C(p)$ such that 
\[
\norm{g_i}_{(\RR^d)^{\otimes i}} = \norm{\pi_i\bra{\log S_\kappa(\WW^h)_{0,nh}}}_{(\RR^d)^{\otimes i}} \leq C(nh)^{i/p}\norm{\WW^h}^i_{1/p\textup{-H\"{o}l;}[0,nh]}
\]
with a similar bound for $h_i$ with $\XX^h$.
Therefore the previous inequality becomes
\begin{align}
&\norm{\pi_m\bra{S_\kappa(\WW^h)_{0,nh} - S_\kappa(\XX^h)_{0,nh}}}_{(\RR^d)^{\otimes m}}\label{e-prev-inequality-new}\\ 
&\leq C \sum^m_{k=1} \frac{1}{k!}\sum_{\substack{i_1,\ldots,i_k > 0\\i_1+\ldots+i_k=m}}\sum^k_{j=1} (nh)^{\frac{1}{p}(m-i_j)} \norm{\WW^h}_{1/p\textup{-H\"{o}l;}[0,nh]}^{\frac{1}{p}(i_1+\ldots +i_{j-1})} \norm{\XX^h}_{1/p\textup{-H\"{o}l;}[0,nh]}^{\frac{1}{p}(i_{j+1}+\ldots+i_k)} \norm{g_{i_j}-h_{i_j}}_{(\RR^d)^{\otimes i_j}}\notag\\
&\leq C \sum^m_{k=1} \frac{1}{k!}\sum_{\substack{i_1,\ldots,i_k > 0\\i_1+\ldots+i_k=m}}\sum^k_{j=1} (nh)^{\frac{1}{p}(m-i_j)} \bra{\norm{\WW^h}_{1/p\textup{-H\"{o}l;}[0,nh]}\vee\norm{\XX^h}_{1/p\textup{-H\"{o}l;}[0,nh]}}^{m-i_j} \norm{g_{i_j}-h_{i_j}}.\notag
\end{align}
So it remains to bound $\norm{g_{i}-h_{i}}_{(\RR^d)^{\otimes i}}$ in $L^q$ for each integer $3\leq i\leq \kappa$. This is the content of Lemma \ref{l-main-moment-estimates} below, which takes particular care in separating out the dependencies on the variables $m$ and $n$.  For the specific calculations of $g_m-h_m$ for $m\leq 5$ we refer to the iterated Baker-Campbell-Hausdorff formula (\ref{e-BCHf-d}) found in the appendix at the end of the paper. But first we need the following consequence of symmetry.

\begin{Lemma}\label{l-symmetry}
Unless the indices $(j_1,\ldots,j_{2n}) \in \{0,1,\ldots,N-1\}^{2n}$ are present in pairs, we have
\[
\EE\bra{W^{(j_1)} \otimes \ldots \otimes W^{(j_{2n})} \big| \seq{A^{(j)}, B^{(j)}}_{j=0}^{N-1}} =0.
\]
\end{Lemma}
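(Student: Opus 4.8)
The plan is to obtain the vanishing from a sign-flip symmetry. Since the $2n$ indices $(j_1,\dots,j_{2n})$ are not present in pairs, some value $j_0$ occurs an odd number of times, say $a$, among them. I would exhibit a measure-preserving transformation $T$ of the probability space carrying the coupling of Proposition~\ref{p-kmt-coupling} which sends $W^{(j_0)}\mapsto-W^{(j_0)}$, fixes every other $W^{(j)}$, and leaves every $A^{(j)}$ and every $B^{(j)}$ unchanged. Write $\Hh:=\sigma\seq{A^{(j)},B^{(j)}:0\le j\le N-1}$. Given such a $T$, every $\Hh$-measurable $\phi$ satisfies $\phi\circ T=\phi$ a.s.\ (Doob--Dynkin, since the generators of $\Hh$ are $T$-invariant), so for bounded $\Hh$-measurable $\phi$, invariance of $\phi$, measure-preservation of $T$, and the fact that precisely the $a$ tensor factors $W^{(j_i)}$ with $j_i=j_0$ change sign give
\begin{align*}
\EE\bra{\bra{W^{(j_1)}\otimes\cdots\otimes W^{(j_{2n})}}\phi}
&=\EE\bra{\bra{(-1)^a\,W^{(j_1)}\otimes\cdots\otimes W^{(j_{2n})}}\phi}\\
&=-\,\EE\bra{\bra{W^{(j_1)}\otimes\cdots\otimes W^{(j_{2n})}}\phi},
\end{align*}
the last step since $a$ is odd. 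Hence this expectation is $0$ for every such $\phi$, which is exactly $\EE\bra{W^{(j_1)}\otimes\cdots\otimes W^{(j_{2n})}\mid\Hh}=0$ (applied componentwise for tensor-valued integrands).

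To build $T$ I would use the bridge/endpoint decomposition from the proof of Lemma~\ref{l-area-decomposition}: on each $[jh,(j+1)h]$ the Brownian path splits into its endpoint increment $W^{(j)}$ and an independent $\RR^d$-valued Brownian bridge $\beta^{(j)}$, with all $2N$ of these objects mutually independent; and recall from (\ref{e-davie-normal-area}) that (with $\tilde W^{(j)}=W^{(j)}$ in our coupling) $B^{(j)}_{kl}=z^{(j)}_kW^{(j)}_l-z^{(j)}_lW^{(j)}_k+\lambda^{(j)}_{kl}$ with $(z^{(j)},\lambda^{(j)})$ a centred Gaussian pair. Let $T$ negate the triple $(W^{(j_0)},\beta^{(j_0)},z^{(j_0)})$ and be the identity on every other coordinate (all other $W^{(j)},\beta^{(j)},z^{(j)},\lambda^{(j)}$, and whatever further auxiliary randomness Davie's dyadic construction uses). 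Each negated object has a symmetric law and is independent of the rest, so -- modulo the point flagged below -- $T$ preserves the joint law. Invariance of the conditioning data then splits in two. For $j=j_0$: Lemma~\ref{l-area-decomposition} gives $A^{(j_0)}_{kl}=\zeta^{(j_0)}_kW^{(j_0)}_l-\zeta^{(j_0)}_lW^{(j_0)}_k+K^{(j_0)}_{kl}$ with $\zeta^{(j_0)}$ linear (odd) and $K^{(j_0)}$ quadratic (even) in $\beta^{(j_0)}$, so the bilinear term survives the two simultaneous sign changes and $K^{(j_0)}$ is untouched, whence $A^{(j_0)}\circ T=A^{(j_0)}$; and $B^{(j_0)}$ is unchanged because $z^{(j_0)}$ and $W^{(j_0)}$ flip together. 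For $j\neq j_0$: $A^{(j)}$ does not involve the flipped data at all, while $B^{(j)}$ depends on $W^{(j_0)}$ and $\beta^{(j_0)}$ only through the quantities appearing in the proof of Proposition~\ref{p-kmt-coupling} -- the conditional covariance $G_{j_0}G_{j_0}^t=\tfrac1{12}(I+M_{j_0}M_{j_0}^t)$ and the matrices $H_E$, which are even in $W^{(j_0)}$, together with the product $G_{j_0}X^{(j_0)}$, in which the sign change of the odd ($\zeta^{(j_0)}$) block of $X^{(j_0)}$ is cancelled by $M_{j_0}\mapsto-M_{j_0}$. Hence every $Y_E,Z_E$ of that proof, and so every $A^{(j)}=\gamma_{\{j\}}$ and $B^{(j)}=\lambda_{\{j\}}$, is $T$-invariant, so $T$ fixes every generator of $\Hh$ as required.

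The one genuinely non-routine step will be justifying that $T$ is measure-preserving on the full space carrying the coupling, i.e.\ that Davie's recursive bisection construction of the Gaussian areas sees the increment $W^{(j_0)}$ only through the $W^{(j_0)}$-quadratic data ($M_{j_0}M_{j_0}^t$ and the $H_E$) and the combinations $G_{j_0}X^{(j_0)}$ -- all of which we have just observed to be $T$-invariant -- plus symmetric auxiliary Gaussians whose signs may be flipped harmlessly. I would read this off directly from the construction recalled in the proof of Proposition~\ref{p-kmt-coupling}: its inputs are exactly those invariant covariance matrices and the $Y_E=2^{-n/2}\sum_{r\in E}G_rX^{(r)}$. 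Once this is in hand, the rest is the standard fact -- already displayed above -- that an odd function of a conditionally symmetric distribution has zero conditional mean.
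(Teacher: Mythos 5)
Your proof is essentially the same argument as the paper's: a sign-flip symmetry that leaves the conditioning $\sigma$-algebra invariant while flipping the sign of the integrand. Both hinge on identifying the simultaneous negation of $W^{(j_0)}$, the bridge data $\beta^{(j_0)}$ (equivalently $\zeta^{(j_0)}$, while $K^{(j_0)}$ is bilinear and hence fixed), and the Gaussian $z^{(j_0)}$ as a law-preserving map that fixes $A^{(j_0)}$ and $B^{(j_0)}$, and on noting that everything else in Davie's construction sees $W^{(j_0)}$ only through even expressions. The paper phrases the same symmetry via the tower property and a statement about the conditional density of $W^{(j_i)}_c$; you phrase it via invariance against bounded $\Hh$-measurable test functions, which is equivalent.

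There is one genuine point in your favour. The paper selects an index $j_i$ with $j_i\notin I$, i.e.\ one appearing exactly once, and conditions on the remaining tensor factors $\{W^{(j_k)}\}_{k\neq i}$. This literally fails when no value in the tuple has multiplicity one but some value has odd multiplicity $\geq 3$ (e.g.\ $(j_1,\ldots,j_6)=(1,1,1,2,2,2)$), since conditioning on $\{W^{(j_k)}\}_{k\neq i}$ would then include $W^{(j_i)}$ itself. Your version, which picks any value $j_0$ of odd multiplicity $a$ and extracts a global factor $(-1)^a=-1$, covers this case cleanly and is the correct form of the statement. (The paper's argument can of course be repaired by conditioning on $\sigma(A,B)$ and $\{W^{(j)}:j\neq j_0\}$, at which point it becomes your argument.) Finally, you rightly flag that measure-preservation of $T$ requires verifying that Davie's recursive bisection depends on $W^{(j_0)}$ only through quantities invariant under the joint flip; the paper's proof glosses over this same point, leaning on the informal assertion that $\{z^{(j)},\zeta^{(j)},K^{(j)},\lambda^{(j)}\}$ are independent of the $\{W^{(j)}\}$, which in the coupled space is only a conditional (covariance-level) statement. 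Your explicit tracing of $G_{j_0}G_{j_0}^t$, $H_E$, and $G_{j_0}X^{(j_0)}$ is the right way to make this rigorous.
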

\begin{proof}
Suppose $j_i$ is without a pair in $\seq{j_1,\ldots,j_{i-1},j_{i+1},\ldots,j_{2n}}=:I$, (that is, $j_i \notin I$). Then setting $\sigma(A,B):=\seq{A^{(j)}, B^{(j)} : 0\leq j \leq N-1}$, the tower property of conditional expectation gives
\begin{align*}
&\EE\bra{W^{(j_1)} \otimes \ldots \otimes W^{(j_i)} \otimes \ldots \otimes W^{(j_{2n})} \big| \sigma(A,B)}\\
&= \EE\bra{ W^{(j_1)} \otimes \ldots W^{(j_{i-1})} \otimes \EE\bra{ W^{(j_i)} \big| \sigma(A,B), \seq{W^{(j_k)}}_{k\neq i}}  \otimes W^{(j_{i+1})} \otimes \ldots \otimes W^{(j_{2n})} \big| \sigma(A,B)}.
\end{align*}
There exists a matrix $Z^{(j_i)}\in \RR^{d(d-1)\times d}$ with entries taking values in $\{\pm z^{(j_i)}_k, \pm\zeta^{(j_i)}_k\}_{k=1}^d$ such that 
\[
\left(\begin{array}{c}
A^{(j_{i})}\\
B^{(j_{i})}
\end{array}\right) =: Z^{(j_i)}W^{(j_i)} + \left(\begin{array}{c}
K^{(j_{i})}\\
\lambda^{(j_{i})}
\end{array}\right),
\]
where $A^{(j_i)}=\bra{A^{(j_i)}_{12},\ldots,A^{(j_i)}_{d-1,d}}^t \in \RR^{\frac{d}{2}(d-1)}$ and $B^{(j_i)}, K^{(j_i)}, \lambda^{(j_i)}$ defined similarly. 
By symmetry we can change the sign of all entries on the right-hand side without changing the law of $(A^{(j_i)}, B^{(j_i)})^t$; that is, 
\begin{equation}\label{e-law-eq}
\left(\begin{array}{c}
A^{(j_{i})}\\
B^{(j_{i})}
\end{array}\right) \overset{\mathcal{L}}{=} \bra{-Z^{(j_i)}}\bra{-W^{(j_i)}} + 
 \left(\begin{array}{c}
-K^{(j_{i})}\\
-\lambda^{(j_{i})}
\end{array}\right)
\end{equation}
Recalling that $\{z^{(j)}, \zeta^{(j)}, K^{(j)},\lambda^{(j)}\}_{j=0}^{N-1}$ are independent of all Brownian increments $\{W^{(j)}\}_{j=0}^{N-1}$, it follows from (\ref{e-law-eq}) that:
\[
\PP\bra{W^{(j_i)}_c \geq x \big| \sigma(A,B), \seq{W^{(j_k)}}_{k\neq i}} = \PP\bra{W^{(j_i)}_c \leq -x \big| \sigma(A,B), \seq{W^{(j_k)}}_{k\neq i}} \textup{ for all } x\geq 0,
\]
for any given coordinate $c\in\seq{1,\ldots,d}$. This symmetry of the conditional density yields
\begin{align*}
\l< e_c^* , \EE\bra{ W^{(j_i)} \big| \sigma(A,B), \seq{W^{(j_k)}}_{k\neq i}}\r>
&= \EE\bra{ \l<e_c^*, W^{(j_i)} \r> \big| \sigma(A,B), \seq{W^{(j_k)}}_{k\neq i}}\\
&= \EE\bra{W^{(j_i)}_c \big| \sigma(A,B), \seq{W^{(j_k)}}_{k\neq i}} 
=0,
\end{align*}
and so 
\begin{align*}
&\EE\bra{W^{(j_1)} \otimes W^{(j_i)} \otimes \ldots \otimes W^{(j_{2n})} \big| \sigma(A,B)}\\
&\gap\gap\gap\gap\gap\gap=  \EE\bra{ W^{(j_1)} \otimes \ldots W^{(j_{i-1})} \otimes 0  \otimes W^{(j_{i+1})} \otimes \ldots \otimes W^{(j_{2n})} \big| \sigma(A,B)} = 0.
\end{align*}
The proof is complete. 
\end{proof}

For convenience, let us define the associative operation $*$ on $\mathfrak{g}^{(m)}(\RR^d)$ by
\begin{equation}\label{e-star-notation}
x_1 * x_2 * \cdots *  x_{k-1} * x_k := [x_1,[x_2,[\ldots,[x_{k-1},x_k]]]].
\end{equation}

\begin{Lemma}\label{l-unfold-norm}
There exists a universal constant $C=C(m)$ such that for every $x_1,\ldots,x_k \in \mathfrak{g}^{(m)}(\RR^d)$, we have 
\[
\norm{\pi_m\bra{x_1*\cdots*x_k}}_{(\RR^d)^{\otimes m}} 
\leq C \sum_{1\leq j_1,\ldots,j_m\leq d}\abs{ \l< e_{j_1}^* \otimes \ldots \otimes e_{j_m}^*, \pi_m\bra{x_1 \otimes \ldots \otimes x_k}\r>}.
\]
\end{Lemma}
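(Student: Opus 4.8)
The plan is to expand the iterated Lie bracket in the tensor algebra, so that everything is reduced to ordinary tensor products, and then to control the cancellation this expansion re-introduces. First I would expand, by induction on $k$ starting from $[a,b]=a\otimes b-b\otimes a$,
\[
x_1*x_2*\cdots*x_k \;=\; \sum_{\sigma}\varepsilon(\sigma)\, x_{\sigma(1)}\otimes x_{\sigma(2)}\otimes\cdots\otimes x_{\sigma(k)}
\]
as a signed sum of at most $2^{k-1}$ reorderings of $x_1\otimes\cdots\otimes x_k$ inside $T^{(m)}(\RR^d)$, the $\varepsilon(\sigma)\in\{\pm1\}$ being explicit signs. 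Since each $x_i\in\mathfrak{g}^{(m)}(\RR^d)$ has no component in degree $0$, every product $x_{\sigma(1)}\otimes\cdots\otimes x_{\sigma(k)}$ is supported in degrees $\ge k$, so $\pi_m$ annihilates it --- and annihilates $x_1*\cdots*x_k$ --- unless $k\le m$. We may therefore assume $k\le m$; there are then at most $2^{m-1}$ terms, and writing $\abs{v}_{\ell^1}:=\sum_{1\le j_1,\ldots,j_m\le d}\abs{\langle e^*_{j_1}\otimes\cdots\otimes e^*_{j_m},v\rangle}$ for $v\in(\RR^d)^{\otimes m}$ and using $\norm{v}_{(\RR^d)^{\otimes m}}\le\abs{v}_{\ell^1}$,
\[
\norm{\pi_m\bra{x_1*\cdots*x_k}}_{(\RR^d)^{\otimes m}} \;\le\; \sum_{\sigma}\norm{\pi_m\bra{x_{\sigma(1)}\otimes\cdots\otimes x_{\sigma(k)}}}_{(\RR^d)^{\otimes m}} \;\le\; 2^{m-1}\max_{\sigma}\,\abs{\pi_m\bra{x_{\sigma(1)}\otimes\cdots\otimes x_{\sigma(k)}}}_{\ell^1}.
\]

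It thus remains to show that $\abs{\pi_m(x_{\sigma(1)}\otimes\cdots\otimes x_{\sigma(k)})}_{\ell^1}$ is dominated by a constant depending only on $m$ (and $d$) times $\abs{\pi_m(x_1\otimes\cdots\otimes x_k)}_{\ell^1}$, uniformly over the reorderings $\sigma$ occurring above. Decomposing $x_i=\sum_{l\ge1}x_i^{(l)}$ into its homogeneous free-Lie components and using multiplicativity of $\abs{\cdot}_{\ell^1}$ under $\otimes$ gives the crude bound
\[
\abs{\pi_m\bra{x_{\sigma(1)}\otimes\cdots\otimes x_{\sigma(k)}}}_{\ell^1} \;\le\; \sum_{\substack{l_1+\cdots+l_k=m\\ l_i\ge1}}\;\prod_{i=1}^{k}\abs{x_i^{(l_i)}}_{\ell^1},
\]
whose right-hand side does not depend on $\sigma$. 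So the lemma amounts to the assertion that this upper bound is in fact comparable to $\abs{\pi_m(x_1\otimes\cdots\otimes x_k)}_{\ell^1}$, i.e.\ that there is only bounded cancellation, in the monomial basis, among the degree-allocation pieces $x_1^{(l_1)}\otimes\cdots\otimes x_k^{(l_k)}$.

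This bounded-cancellation statement is the main obstacle, and it is where the hypothesis $x_i\in\mathfrak{g}^{(m)}(\RR^d)$ is used in an essential way: each component $x_i^{(l)}$ is a primitive element for the deconcatenation coproduct, so all of its interior coproduct components vanish, which rigidly pins down each degree-allocation piece relative to the others. For the order-reversing $\sigma$ this rigidity already gives an exact identity: the reversal anti-automorphism of $T^{(m)}(\RR^d)$ is an $\abs{\cdot}_{\ell^1}$-isometry and, since $\textup{rev}(x_i^{(l)})=(-1)^{l-1}x_i^{(l)}$, it carries $\pi_m(x_1\otimes\cdots\otimes x_k)$ onto $(-1)^{m-k}\pi_m(x_k\otimes\cdots\otimes x_1)$, so the two have equal $\ell^1$-size. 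For a general $\sigma$ one argues similarly, tracking which monomials can receive contributions from which degree-allocation piece --- organised by the partial sums $l_1,\,l_1+l_2,\ldots$ --- and reducing to a finite book-keeping estimate; making this uniform in $\sigma$ is the delicate point. I note finally that in every application of the lemma $m$ is bounded by the fixed integer $\kappa$, so should a clean general argument prove elusive the comparison can instead be checked directly, for the finitely many relevant $m$, from the explicit iterated Baker--Campbell--Hausdorff expansion~(\ref{e-BCHf-d}).
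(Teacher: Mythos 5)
Your proposal follows the paper's proof line for line through the routine reductions: unfold $x_1*\cdots*x_k$ as a signed sum of at most $2^{k-1}$ reorderings $\pm\,x_{\sigma(1)}\otimes\cdots\otimes x_{\sigma(k)}$, observe that $\pi_m$ kills everything unless $k\le m$, and pass from the Euclidean tensor norm to the $\ell^1$ coordinate sum. At that point you correctly isolate the remaining content of the lemma --- a bound on $\norm{\pi_m(x_{\sigma(1)}\otimes\cdots\otimes x_{\sigma(k)})}$ in terms of the $\ell^1$ size of $\pi_m(x_1\otimes\cdots\otimes x_k)$, uniform over the reorderings that appear --- but you do not prove it. The reversal anti-automorphism identity you give handles only the single permutation $\sigma=(k,k-1,\ldots,1)$; it does not extend to the other $\sigma$ in the Dynkin expansion (already for $[x_1,[x_2,x_3]]$ the cyclic orderings appear and are not reversals). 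Your final suggestion of checking ``the finitely many relevant $m$ directly from~(\ref{e-BCHf-d})'' is not an argument, since the required constant must be uniform in $d$ and in the coefficients of the $x_i$, not just in $m$. So as written there is a genuine gap at the step you yourself flag as delicate.

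It is only fair to note that the paper's proof disposes of exactly this step with the one-line claim ``by symmetry, $\norm{\pi_m(x_{\sigma(1)}\otimes\cdots\otimes x_{\sigma(k)})}=\norm{\pi_m(x_1\otimes\cdots\otimes x_k)}$,'' which is not literally correct when the $x_i$ have mixed degree: the degree-allocation cross terms change under a permutation of the factors, and one can check (e.g.\ with $k=3$, $m=4$, $x_i=e_1+\alpha_i[e_1,e_2]$) that $\norm{\pi_4(x_2\otimes x_3\otimes x_1)}\ne\norm{\pi_4(x_1\otimes x_2\otimes x_3)}$ for generic $\alpha_i$, even though the inequality of the lemma survives. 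So you have accurately located the weak joint in the argument and have been honest that you cannot close it; but that means the proposal, unlike the paper's proof, does not even claim to finish, and what you have is incomplete.
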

\begin{proof}
If $k>m$ then the inequality is trivial: both sides are zero. So we suppose that $k\leq m$. 
Unfolding the nested Lie brackets of $x_1*\cdots*x_k$, there will be at most $2^k (\leq 2^m)$ non-zero terms in the resultant expansion, with each term taking the form
\[
\pm x_{\sigma(1)} \otimes x_{\sigma(2)} \otimes \ldots \otimes x_{\sigma(k)}, 
\]
for some $\sigma \in S_k$, where $S_k$ is the symmetric group of permutations on $\seq{1,\ldots,k}$ (see Lemma \ref{l-lie-tech}). 
In other words, every $x_i$ will be present exactly once in each term in the expansion of $x_1*\cdots*x_k$. By symmetry, $\norm{x_{\sigma(1)} \otimes \ldots \otimes x_{\sigma(k)}}_{(\RR^d)^{\otimes m}} = \norm{x_1\otimes \ldots \otimes x_k}_{(\RR^d)^{\otimes m}}$. 
Since $k< m$, it follows that there exists a constant $C=C(m)$ such that 
\begin{align*}
\norm{\pi_m\bra{x_1*\cdots*x_k}}_{(\RR^d)^{\otimes m}} 
&\leq C\norm{\pi_m\bra{x_1\otimes\ldots\otimes x_k}}_{(\RR^d)^{\otimes m}}\\
&= C \sqrt{\sum_{1\leq j_1,\ldots,j_m\leq d} \abs{\l<e_{j_1}^* \otimes \ldots \otimes e_{j_m}^*,\pi_m\bra{x_1\otimes \ldots \otimes x_k}\r>}^2}.
\end{align*}
Using the equivalence of norms on finite-dimensional vectors spaces (in particular, $\norm{a}_{l^2} \leq \norm{a}_{l^1}$ for $a\in \RR^p$) yields
\begin{equation*}
\norm{\pi_m\bra{x_1\otimes \ldots \otimes x_k}}_{(\RR^d)^{\otimes m}} \leq \sum_{1\leq j_1,\ldots,j_m\leq d}\abs{ \l< e_{j_1}^* \otimes \ldots \otimes e_{j_m}^*, \pi_m\bra{x_1 \otimes \ldots \otimes x_k}\r>}. 
\end{equation*}
The claim follows. 
\end{proof}

\begin{Lemma}\label{l-main-moment-estimates}
Fix $q\geq 1$, an integer $n\geq 1$, and for $m=2,\ldots,\kappa$ define:
\begin{align*}
g_{m} &=   \pi_m\bra{\log S_\kappa(\WW^h)_{0,nh}}, \gap
h_{m}=  \pi_m\bra{\log S_\kappa(\XX^h)_{0,nh}} \in \mathfrak{g}^{(m)}(\RR^d).
\end{align*}
Then for each $m$, there exists a constant $C_m=C_m(\kappa,q)$ such that
\begin{equation}\label{e-lemma-aim}
\abs{\norm{g_m-h_m}_{(\RR^d)^{\otimes m}}}_{L^q} \leq C_m  (nh)^{\frac{m}{2}-1}h\log(h^{-1}). 
\end{equation}
\end{Lemma}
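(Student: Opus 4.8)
The plan is to expand $g_m-h_m$ via the iterated Baker--Campbell--Hausdorff formula (\ref{e-BCHf-d}) into a finite sum of iterated Lie brackets of the increments, to replace each bracket by an ordinary tensor-product sum using Lemma \ref{l-unfold-norm}, and then to estimate the resulting multi-index sums in $L^q$ by combining the symmetry Lemma \ref{l-symmetry}, Abel summation, and the global coupling bounds of Propositions \ref{p-kmt-coupling} and \ref{p-coupling-lp}. For the algebraic step: by (\ref{e-x-notation}) and Lemma \ref{l-useful}, $g_i=\pi_i\bra{H(W^{(0)}+A^{(0)},\ldots)}$ and $h_i=\pi_i\bra{H(W^{(0)}+B^{(0)},\ldots)}$; writing $D^{(r)}:=A^{(r)}-B^{(r)}\in[\RR^d,\RR^d]$ and telescoping over the degree-$2$ slots (swapping $A$'s for $B$'s one at a time), multilinearity of the bracket exhibits $g_m-h_m$ as a finite $\ZZ$-linear combination of iterated brackets, each containing exactly one factor $D^{(r_*)}$ and all remaining factors among $\seq{W^{(\cdot)},A^{(\cdot)},B^{(\cdot)}}$ (no term without a $D$-factor survives, because the two inputs have identical degree-$1$ parts). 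Counting homogeneous degrees in $\mathfrak{g}^{(m)}(\RR^d)$, a bracket of $k$ factors meeting $\pi_m$ has exactly $2k-m$ factors of degree $1$ (type $W$) and $m-k$ of degree $2$, one being $D^{(r_*)}$ and the other $m-k-1$ of type $A$ or $B$; in particular $\lceil m/2\rceil\le k\le m-1$.

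For the reduction step, in the notation (\ref{e-star-notation}) a typical such bracket is $\sum_{\vec r}c_{\vec r}\,z_{r_1}*\cdots*z_{r_k}$, summed over a multi-index $\vec r=(r_1,\ldots,r_k)$ in a subset of $\seq{0,\ldots,n-1}^k$ prescribed by BCH, with each $z_{r_a}\in\seq{W^{(r_a)},A^{(r_a)},B^{(r_a)},D^{(r_a)}}$ according to its slot. By Lemma \ref{l-unfold-norm} it suffices to bound, for each fixed coordinate $(j_1,\ldots,j_m)$, the scalar sum $\sum_{\vec r}c_{\vec r}\langle e_{j_1}^*\otimes\cdots\otimes e_{j_m}^*,\, z_{r_1}\otimes\cdots\otimes z_{r_k}\rangle$ in $L^q$. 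Isolating the index $r_*$ of the single $D$-factor, this sum has the shape $\sum_j D^{(j)}_\bullet\, P_j\, Q_j$, where $P_j$ and $Q_j$ are the order-constrained prefix and suffix subsums over the remaining (pure $W$- and $A/B$-) slots.

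For the estimate, Abel summation in $j$ turns $\sum_j D^{(j)}_\bullet P_j Q_j$ into boundary and telescoped terms, each carrying a partial sum $\Delta=\sum_{J_1\le i<J_2}D^{(i)}_\bullet$ over a block of indices, together with (re-indexed) $P,Q$ and $O_\kappa(1)$ further increment coordinates near the block endpoints; since the coupling keeps the Brownian increments equal, $\Delta$ is a coordinate of $\pi_2\bra{\WW^h_{J_1 h,J_2 h}-\XX^h_{J_1 h,J_2 h}}$, so Proposition \ref{p-coupling-lp} (equivalently Proposition \ref{p-kmt-coupling} upgraded through the Wiener-chaos equivalence of $L^q$-norms) gives $\abs{\Delta}_{L^{q'}}\le Cq'\,h\log(h^{-1})$ for all $q'\ge1$. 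For $P_j$ and $Q_j$, condition on $\sigma(A,B):=\sigma\bra{\seq{A^{(i)},B^{(i)}}}$ and apply Lemma \ref{l-symmetry}: under $\EE\bra{\,\cdot\,\mid\sigma(A,B)}$ every surviving product has its $W$-indices matched in coincident pairs, so the $2k-m$ $W$-slots collapse to $\lfloor(2k-m)/2\rfloor$ pair-sums $\sum_i(W^{(i)}_\bullet)^2$ of conditional size $O(nh)$ plus, when $m$ is odd, one leftover single $W$-sum of $L^{q'}$-size $O(\sqrt{nh})$, contributing $(nh)^{(2k-m)/2}$ in all; and each of the $m-k-1$ degree-$2$ non-$D$ slots contributes $\abs{\sum_i A^{(i)}_\bullet}_{L^{q'}}\lesssim\sqrt{n}\,h$, using $\Var(A^{(i)}_{kl})\asymp h^2$ (Lemma \ref{l-area-decomposition}) and hypercontractivity of the Brownian second chaos. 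H\"older's inequality in $L^q$ then bounds each coordinate of each bracket by $C(\kappa,q)\,(nh)^{(2k-m)/2}(\sqrt{n}\,h)^{m-k-1}h\log(h^{-1})=C(\kappa,q)\,n^{(k-1)/2}h^{(m-2)/2}h\log(h^{-1})$, which is at most $C(\kappa,q)\,(nh)^{m/2-1}h\log(h^{-1})$ since $n\ge1$ and $k-1\le m-2$. Summing over the finitely many coordinates and brackets coming from (\ref{e-BCHf-d}) and absorbing all multiplicities and $q$-powers into $C_m=C_m(\kappa,q)$ gives (\ref{e-lemma-aim}).

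The analytic inputs are elementary; the real work, and the main obstacle, is the bookkeeping of the last two steps --- propagating the order constraints on the BCH multi-indices, carrying out the nested Abel summations so that every $D$-contribution appears as a partial sum $\Delta$ over a contiguous block that is eligible for Proposition \ref{p-coupling-lp}, and invoking Lemma \ref{l-symmetry} under the correct conditioning. The last point is genuinely needed because the coupling of Proposition \ref{p-kmt-coupling} is non-local, so the pairs $(A^{(i)},B^{(i)})$ are not independent across $i$ and one cannot bound the products by a single independence-based expansion. The half-integer exponent $(nh)^{m/2-1}$ for odd $m$ arises precisely from the one leftover $W$-sum having size $\sqrt{nh}$, and the logarithmic loss must be incurred exactly once, on $\Delta$, so the cancellation in the $D$-partial sums must never be sacrificed in favour of $\sum_i\abs{D^{(i)}}$.
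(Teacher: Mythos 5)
Your proposal uses the same ingredients as the paper's proof --- the iterated BCH formula, the telescoping decomposition that isolates a single $D^{(r)}:=A^{(r)}-B^{(r)}$ factor per bracket (exactly the paper's $z_{i_r}$), the degree-count $N_1=2k-m$, $N_2=m-k$ with $\lceil m/2\rceil\le k\le m-1$, the reduction to scalar coordinates via Lemma~\ref{l-unfold-norm}, the conditional symmetry Lemma~\ref{l-symmetry}, and Propositions~\ref{p-kmt-coupling}/\ref{p-coupling-lp} applied once to a contiguous $D$-partial sum --- and your exponent arithmetic ($n^{(k-1)/2}h^{(m-2)/2}\le(nh)^{(m-2)/2}$ since $k\le m-1$, $nh\le 1$) is correct. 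The genuine point of divergence is how you produce the contiguous block over which $\sum_j D^{(j)}$ is taken. The paper swaps the order of summation directly, observing (with justification from the induction proving~(\ref{e-BCHf-d})) that for fixed outer indices the set $\{j:(i_1,\ldots,j,\ldots,i_k)\in\lambda\}$ is a union of at most $k$ consecutive blocks, so by multilinearity the inner $j$-sum can be pulled inside the bracket as a single $\pi_2(\WW^h_{n_1h,n_2h}-\XX^h_{n_1h,n_2h})$ factor, with no remainder terms. You instead propose nested Abel summation, which is a generic identity not needing the BCH contiguity structure, but which generates boundary and telescoped remainder terms ($\Delta_j(b_{j+1}-b_j)$) that must each be controlled; you flag this correctly as ``the real work,'' but the bookkeeping it demands is heavier than the paper's, and your sketch does not verify that the telescoped remainder does not reintroduce a lost factor of $n$ (it does not, because the $b_{j+1}-b_j$ drop is balanced by $nh\le1$, but this needs to be said). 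Two other small remarks: your per-slot estimate $\lvert\sum_i A^{(i)}_\bullet\rvert_{L^{q'}}\lesssim\sqrt{n}\,h$ for the non-$D$ area slots is sharper than the paper's cruder triangle-inequality bound $O(nh)$, though the paper's suffices for the stated rate; and the parity discussion (``leftover single $W$-sum when $m$ is odd'') is slightly misleading since the paper pairs the $W$-slots across the $2r$ copies of $\varphi_I$ in the expansion of $\lvert Z\rvert_{L^{2r}}^{2r}$ (always an even total), not within a single bracket, so no leftover arises there --- the $(nh)^{m/2-1}$ exponent comes out uniformly, not from an odd-$m$ boundary effect.
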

\begin{proof}
Without loss of generality we may assume $q=2r$ for some integer $r\geq 1$. 
The case of $m=2$ is precisely the content of our coupling result in Proposition \ref{p-kmt-coupling} so we restrict the proof to  $3\leq m \leq \kappa$.
We define $x_i, y_i$ as in (\ref{e-x-notation}).
Let $\tau_k: \mathfrak{g}^{(m)}(\RR^d) \to \mathfrak{g}^{(m)}(\RR^d)$ denote the canonical projection onto nested Lie brackets terms of length $k$. Note that $\tau_k\bra{g_m-h_m} = 0$ for all $k\geq m$. Indeed, for $k>m$ this is a consequence of truncation, while in the case of $k=m$, the nested Lie brackets will only consist of $W^{(j)}$ increment terms, which are common to both expressions.  
Using Lemma \ref{l-lie-tech} for rearranging nested Lie brackets into brackets of the form of (\ref{e-star-notation}) if needed, the Baker-Campbell-Hausdorff formula gives index sets $\lambda \subset \seq{1,\ldots,N-1}^k$ and corresponding coefficients $c_\lambda$ such that  
\[
\tau_k\bra{g_m} = \sum_{\lambda \in \Lambda_k} c_\lambda \sum_{(i_1,\ldots,i_k)\in\lambda} \pi_m\bra{x_{i_1}*\cdots*x_{i_k}}.
\]
Setting $z_i := x_i - y_i=A^{(i-1)}-B^{(i-1)}$, we can exploit the property that a nested bracket of length $k$ is a $k$-multilinear map and employ a telescoping sum to find that 
\begin{align*}
&\tau_k(g_m-h_m)\\
&= \sum_{\lambda \in \Lambda_k} c_\lambda {\sum_{(i_1,\ldots,i_k) \in \lambda} \pi_m\bra{x_{i_1}*\cdots*x_{i_k} - y_{i_1}*\cdots*y_{i_k}}}\\
&= \sum_{\lambda \in \Lambda_k} c_\lambda \sum_{(i_1,\ldots,i_k) \in \lambda} \pi_m\bra{z_{i_1}*x_{i_2}*\cdots *x_{i_k} + y_{i_1} * z_{i_2} * x_{i_3}* \cdots * x_{i_k} + \ldots + y_{i_1} *\cdots*y_{i_{k-1}}*z_{i_k} }\\
&=  \sum_{r=1}^k \sum_{\lambda \in \Lambda_k} c_\lambda \sum_{(i_1,\cdots,i_k) \in \lambda}  \pi_m\bra{y_{i_1} * \cdots * y_{i_{r-1}} * z_{i_r} * x_{i_{r+1}} * \cdots * x_{i_k}}.
\end{align*}
Note that each nested Lie bracket term in the difference $\tau_k(g_m-h_m)$ will contain $N_1$ increment terms (that is, $W^{(i)}$) and $N_2$ area terms (either $A^{(i)}, B^{(i)}$) such that $N_1+N_2 = k$ and $N_1+ 2N_2 = m$. 
By changing the order of summation if necessary, for each $(r,\lambda)$ pair there exist index sets $\hat{\lambda}^r$ and $\lambda_I^r$, (the latter consisting of necessarily consecutive elements), such that
\begin{align}
&\sum_{(i_1,\ldots,i_k) \in \lambda}  \pi_m\bra{y_{i_1} * \cdots * y_{i_{r-1}} * z_{i_r} * x_{i_{r+1}} * \cdots * x_{i_k}}\notag\\
&= \sum_{{I}=(i_1,\ldots,i_{r-1},i_{r+1},\ldots,i_k)\in\hat{\lambda}^r}  \sum_{j\in\lambda^r_I}  \pi_m\bra{y_{i_1} * \cdots * y_{i_{r-1}} * z_{i_r} * x_{i_{r+1}} * \cdots * x_{i_k}}\notag\\
&=\sum_{{I}=(i_1,\ldots,i_{r-1},i_{r+1},\ldots,i_k)\in\hat{\lambda}^r} 
\pi_m\bra{y_{i_1}*\cdots*y_{i_{r-1}}*\bra{\sum_{j\in\lambda^r_I} z_j} * x_{i_{r+1}} * \cdots * x_{i_k}}.\label{e-to-be-projected}
\end{align}
We can always choose $\lambda_I^r$ to be made up of consecutive elements because the induction proof of the iterated Baker-Campbell-Hausdorff formula (\ref{e-BCHf-d}) and the bilinearity of nested Lie brackets guarantee that for any fixed indices $i_1,\ldots,i_{r-1},i_{r+1},\ldots,i_k \in \seq{1,\ldots,n}$, the set
\[
\seq{j : (i_1,\ldots,i_{r-1},j,i_{r+1},\ldots,i_k) \in \lambda}
\]
is either empty or made up of consecutive elements.

Since the indices of $\lambda^r_I$ are consecutive,
\[
\sum_{j\in \lambda_I^r} z_j = \sum_{j\in\lambda_I^r} \bra{A^{(j-1)}-B^{(j-1)}} 
= \pi_2\bra{\WW^h_{s,t} - \XX^h_{s,t}},
\]
for some pair $(s,t) = (n_1h,n_2h)$ where $n_1, n_2\in \{0,1,\ldots,N\}$. Applying Proposition \ref{p-coupling-lp} to the right-hand side yields the estimate
\begin{equation}\label{e-coupling-nice}
\abs{\norm{{\sum_{j\in\lambda^r_I} z_j}}_{(\RR^d)^{\otimes 2}}}_{L^q} 
\leq C(q)h\log(h^{-1}),
\end{equation}
Alternatively, we could have argued that for every set of fixed indices $i_1,\ldots,i_{r-1},i_{r+1},\ldots,i_k \in \seq{1,\ldots,n}$, the set
\[
\seq{j : (i_1,\ldots,i_{r-1},j,i_{r+1},\ldots,i_k) \in \lambda}
\]
can be decomposed into at most $k$ unique sets, each composed of consecutive elements themselves. Indeed, the other $k-1$ indices impose at most $k$ restrictions on the free index $j$ (see the iterated formula (\ref{e-BCHf-d})). Since $k\leq m$, it follows that the constant $C(q)$ in (\ref{e-coupling-nice}) would have to be replaced by $C(q,m)$, (that is a constant dependent on both $q$ and $m$). 

Set $Z$ to be the projection of (\ref{e-to-be-projected}) onto an arbitrary coordinate $(c_1,\ldots,c_m)$ of $(\RR^d)^{\otimes m}$:
\begin{align*}
&\l< e_{c_1}^* \otimes \ldots \otimes e_{c_m}^*, \sum_{{I}=(i_1,\ldots,i_{r-1},i_{r+1},\ldots,i_k)\in\hat{\lambda}^r}\pi_m\bra{y_{i_1} \otimes \ldots \otimes y_{i_{r-1}} \otimes \bra{\sum_{j\in\lambda^r_I} z_j} \otimes x_{i_{r+1}} \otimes \ldots \otimes x_{i_k}}\r>
\end{align*}
and for $I=(i_1,\ldots,i_{r-1},i_{r+1},\ldots,i_k)\in \hat{\lambda}^r$ define
\[
\varphi_I :=\l< e^*_{c_1} \otimes \ldots \otimes e^*_{c_m}, \pi_m\bra{y_{i_1} \otimes \ldots \otimes y_{i_{r-1}} \otimes \bra{\sum_{j\in\lambda^r_I} z_j} \otimes x_{i_{r+1}} \otimes \ldots \otimes x_{i_k}}\r>.
\]
By the equivalence of norms given by Lemma \ref{l-unfold-norm}, it suffices to prove that
\[
\abs{Z}_{L^q} \leq C(nh)^{\frac{m}{2}-1}(h\log(h^{-1})).
\] 
in order to establish (\ref{e-lemma-aim}). 
To this end, note that since $q=2r$ for some integer $r$, we have
\[
\abs{Z}_{L^q}^q = \abs{Z}_{L^{2r}}^{2r}
= \EE\bra{ \bra{\sum_{I\in \hat{\lambda}^r} \varphi_I}^{2r}}
= \sum_{I_1 \in \hat{\lambda}^r} \ldots \sum_{I_{2r} \in \hat{\lambda}^r} \EE\bra{\varphi_{I_1} \ldots \varphi_{I_{2r}}}.
\]
The $qN_1=2rN_1$ increment terms in the expression for $\abs{Z}^{q}_{L^q}$ must be paired by Lemma \ref{l-symmetry} and the tower property of conditional expectation. There are $(qN_1-1)!! \leq C(m,q)$ possible perfect matchings, (where $N!! := \frac{(2k)!}{2^k k!}$ for $N=2k+1$). Each matching is summed over at most $n$ possible indices and has scaling $O(h)$ in $L^p$ (for all $p\geq 1$). The remaining $(qN_2-2)$ area sums are taken over at most $n$ indices each, with each constituent area term having scaling $O(h)$ in $L^p$.
By combining these observations with (\ref{e-coupling-nice}) and the Cauchy-Schwarz inequality, we conclude that
\begin{align*}
\abs{Z}^{2r}_{L^{2r}} \leq C(nh)^{rN_1} (nh)^{2r(N_2-1)}(h\log(h^{-1}))^{2r} &= C(nh)^{r(N_1+2N_2-2)}(h\log(h^{-1}))^{2r}\\
&= C(nh)^{r(m-2)}(h\log(h^{-1}))^{2r}
\end{align*}
and the result follows. 
\end{proof}

\begin{proof}[Proof of Proposition \ref{p-main-estimate-prop}]
For $\abs{t-s} \leq h$ the result is immediate from scaling so we restrict ourselves to $\abs{t-s}\geq h$. 
Since $\WW^h$ and $\XX^h$ are both piecewise abelian over each interval $[jh,(j+1)h]$ it suffices to prove the claim for $(s,t)=(0,nh)$. 

To this end we combine (\ref{e-prev-inequality-new}) with the estimates of Lemma \ref{l-main-moment-estimates} using the Cauchy-Schwarz inequality and find
\begin{align*}
&\abs{\norm{\pi_m\bra{S_\kappa(\WW^h)_{0,nh} - S_\kappa(\XX^h)_{0,nh}}}_{(\RR^d)^{\otimes m}}}_{L^q}\\
&\leq C_1 \sum^m_{k=1} \frac{1}{k!}\sum_{\substack{i_1,\ldots,i_k > 0\\i_1+\ldots+i_k=m}}\sum^k_{j=1} (nh)^{\frac{1}{p}(m-i_j)} \bra{ \abs{\norm{\WW^h}_{1/p\textup{-H\"{o}l;}[0,nh]}^{m-i_j}}_{L^{2q}} + \abs{\norm{\XX^h}_{1/p\textup{-H\"{o}l;}[0,nh]}^{m-i_j}}_{L^{2q}}} \abs{\norm{g_{i_j} - h_{i_j}}}_{L^{2q}}\\
&\leq C_2 \sum^m_{k=1} \frac{1}{k!}\sum_{\substack{i_1,\ldots,i_k > 0\\i_1+\ldots+i_k=m}}\sum^k_{j=1} (nh)^{\frac{1}{p}(m-i_j)+\frac{m}{2}-1}h \log( h^{-1}).  
\end{align*}
The last line follows uses the integrability of the random quantities 
\[
\exp\bra{\alpha\norm{\WW^h}^2_{1/p\textup{-H\"{o}l;}[0,1]}}, \gap \exp\bra{\alpha\norm{\XX^h}^2_{1/p\textup{-H\"{o}l;}[0,1]}}
\]
for some $\alpha>0$ in any given $L^q$-norm (cf. Corollary 13.14 of \cite{FV}). 
Since $nh\leq Nh=1$, we have
\begin{align*}
\norm{\pi_m\bra{S_\kappa(\WW^h)_{0,nh} - S_\kappa(\XX^h)_{0,nh}}}_{(\RR^d)^{\otimes m}}
&\leq C_2 \sum^m_{k=1} \frac{1}{k!}\sum_{\substack{i_1,\ldots,i_k > 0\\i_1+\ldots+i_k=m}}\sum^k_{j=1} (nh)^{\frac{m}{2}-1}h \log(h^{-1})\\
&\leq C_3(nh)^{\frac{m}{2}-1}h\log(h^{-1}), 
\end{align*}
for some constant $C_3=C_3(m,\kappa)>0$. The proof is complete. 
\end{proof}

We are now in a position to prove the main result of this section, namely the claimed lifted rough path estimates. 

\begin{Theorem}\label{t-main-bound}
Fix an integer $\kappa>2$ and set $p\geq \kappa$ such that $\floor{p}=\kappa$. 
Then for all $q\geq 1$, there exists a constant $C=C(\kappa,p,q)$ such that for for all $\varepsilon>0$, 
\[
\abs{\rho_{1/p\textup{-H\"{o}l;}[0,1]}\bra{S_\kappa(\WW^h),S_\kappa(\XX^h)}}_{L^q} 
\leq C h^{1-2/p-\varepsilon}
\]
\end{Theorem}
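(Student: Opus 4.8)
The plan is to estimate the inhomogeneous $1/p$-H\"older metric level by level and then combine. Write $D^{(m)}_{s,t}:=\pi_m\bra{S_\kappa(\WW^h)_{s,t}-S_\kappa(\XX^h)_{s,t}}$; by definition
\[
\rho_{1/p\textup{-H\"{o}l;}[0,1]}\bra{S_\kappa(\WW^h),S_\kappa(\XX^h)}=\max_{m=1,\ldots,\kappa}\;\sup_{0\le s<t\le 1}\;\norm{D^{(m)}_{s,t}}_{(\RR^d)^{\otimes m}}\big/\abs{t-s}^{m/p}.
\]
Since $\WW^h$ and $\XX^h$ share the same first level, and lifting does not alter lower levels, $D^{(1)}\equiv 0$, so only the levels $2\le m\le\kappa$ require attention.

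Fix such an $m$. First I would interpolate Proposition \ref{p-main-estimate-prop}: using $a\wedge b\le a^{1-\theta}b^{\theta}$, valid for all $\theta\in[0,1]$,
\[
\abs{\norm{D^{(m)}_{s,t}}_{(\RR^d)^{\otimes m}}}_{L^q}\le C\bra{h\log(h^{-1})}^{1-\theta}\abs{t-s}^{m/2-1+\theta}.
\]
Next I would feed this moment bound into a Kolmogorov/Garsia--Rodemich--Rumsey argument for rough paths (\cite[Theorem A.13]{FV}, exactly as used in Lemma \ref{l-holder-bound}). The complication at levels $m>2$ is that $D^{(m)}$ is not additive in $(s,t)$; instead Chen's relation gives
\[
D^{(m)}_{s,t}=D^{(m)}_{s,u}+D^{(m)}_{u,t}+\sum_{0<i<m}\bra{D^{(i)}_{s,u}\otimes\pi_{m-i}\bra{S_\kappa(\WW^h)_{u,t}}+\pi_i\bra{S_\kappa(\XX^h)_{s,u}}\otimes D^{(m-i)}_{u,t}}.
\]
So I would run an induction on $m$ (base case $D^{(1)}\equiv0$): assuming the levels below $m$ are already controlled in the $1/p$-H\"older metric, each cross term is a lower-level distance times an increment of $S_\kappa(\WW^h)$ or $S_\kappa(\XX^h)$, whose $1/p$-H\"older norms lie in a fixed inhomogeneous Wiener chaos and hence have all moments (cf. Corollary 13.14 of \cite{FV}); after a Cauchy--Schwarz/H\"older inequality in probability these cross terms enter at the correct order $\abs{t-s}^{m/p}$ with the same small prefactor $\bra{h\log(h^{-1})}^{1-\theta}$, so the sewing/GRR estimate closes the induction and yields
\[
\abs{\rho^{(m)}_{1/p\textup{-H\"{o}l;}[0,1]}\bra{S_\kappa(\WW^h),S_\kappa(\XX^h)}}_{L^q}\le C\bra{h\log(h^{-1})}^{1-\theta},
\]
provided the exponent $m/2-1+\theta$ strictly exceeds $m/p$ and $q$ is taken large enough.

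It then remains to check a single $\theta$ works for all $m$ and to clean up. The requirement $m/2-1+\theta>m/p$ reads $\theta>1-m\bra{\tfrac12-\tfrac1p}$; as $p\ge\kappa>2$ the right side is strictly decreasing in $m$, so the binding case is $m=2$, giving $\theta>2/p$. Taking $\theta=2/p+\varepsilon/2$, which is $<1$ once $\varepsilon$ is small since $2/p\le2/\kappa<1$, gives, for $q$ large,
\[
\abs{\rho_{1/p\textup{-H\"{o}l;}[0,1]}\bra{S_\kappa(\WW^h),S_\kappa(\XX^h)}}_{L^q}\le C\bra{h\log(h^{-1})}^{1-2/p-\varepsilon/2}\le Ch^{1-2/p-\varepsilon},
\]
the final step absorbing the logarithm into the spare $h^{-\varepsilon/2}$. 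Since $\abs{\cdot}_{L^q}\le\abs{\cdot}_{L^{q'}}$ on a probability space for $q\le q'$, the bound for large $q$ propagates to every $q\ge1$, with the constant allowed to depend on $q$. I expect the main obstacle to be the inductive passage through the higher tensor levels $m>2$: at level $2$ the criterion applies to $D^{(2)}$ directly (as in Lemma \ref{l-holder-bound}), but for $m>2$ one must absorb the Chen cross-terms, which is precisely where the a priori H\"older integrability of the lifts $S_\kappa(\WW^h),S_\kappa(\XX^h)$ is essential; the remaining ingredients (the interpolation, the arithmetic in $\theta$, and the $L^q$ comparisons) are routine.
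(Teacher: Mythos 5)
Your proposal follows essentially the same route as the paper: interpolate Proposition \ref{p-main-estimate-prop} via $a\wedge b\leq a^{1-\theta}b^{\theta}$, pass through the rough-path Kolmogorov criterion of \cite[Theorem A.13]{FV}, then choose $\theta$ slightly above $2/p$ and absorb the logarithm. The one extra (and unnecessary) step you propose is the induction over levels $m$ to absorb the Chen cross-terms --- the paper first downgrades the exponent $m/2-1+\theta$ to $m\theta/2$ (valid for $m\geq 2$, $\theta\leq 1$, $\abs{t-s}\leq 1$) so that every level scales as $m\cdot(\theta/2)$, and then a single invocation of \cite[Theorem A.13]{FV}, which is already stated for multi-level rough-path distances and internally accounts for the multiplicative (Chen) structure, yields the $1/p$-H\"older bound at all levels simultaneously without any further induction on your part.
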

\begin{proof}
Fix $\theta \in [0,1]$ and note that 
\[
\abs{t-s}^{\frac{m}{2}-1}\bra{h\log(h^{-1}) \wedge \abs{t-s}} \leq (h\log(h^{-1}))^{1-\theta}\abs{t-s}^{\frac{m}{2}-1+\theta} \leq (h\log(h^{-1}))^{1-\theta}\abs{t-s}^{\frac{m\theta}{2}},
\]
since $\frac{m}{2}-1+\theta\geq \frac{m\theta}{2}$ for $m\geq 2$.
Then combining the fact that $\pi_1(\WW^h_{s,t}-\XX^h_{s,t})=0$ with Proposition \ref{p-main-estimate-prop} guarantees that 
\[
\max_{m=1,\ldots,\kappa} \abs{\norm{\pi_m\bra{S_\kappa(\WW^h)_{s,t}-S_\kappa(\XX^h)_{s,t}}}_{(\RR^d)^{\otimes m}}}_{L^q} \leq C(h\log(h^{-1}))^{1-\theta} \abs{t-s}^{\frac{m\theta}{2}}.
\]
Appealing to the rough path Kolmogorov regularity theorem of \cite[Theorem A.13]{FV}, it follows that for all $\gamma<\frac{\theta}{2}$ we have 
\[
\max_{m=1,\ldots,\kappa} \abs{\sup_{0\leq s < t\leq 1} \frac{\norm{\pi_m\bra{S_\kappa(\WW^h)_{s,t}-S_\kappa(\XX^h)_{s,t}}}_{(\RR^d)^{\otimes m}}}{\abs{t-s}^{m\gamma}} }_{L^q} \leq C (h\log(h^{-1}))^{1-\theta}. 
\]
Therefore so long as $p=\gamma^{-1}>\frac{2}{\theta}$, 
\[
\abs{\rho_{1/p\textup{-H\"{o}l;}[0,1]}\bra{S_\kappa(\WW^h),S_\kappa(\XX^h)}}_{L^q} 
\leq C (h\log(h^{-1}))^{1-\theta}.
\]
The condition $p>\frac{2}{\theta}$ is satisfied by taking $\theta=\frac{2}{p}-\varepsilon$ for some small constant $\varepsilon>0$. Since this latter constant is arbitrary, we may drop the $\log(h^{-1})$ factor from our bound and the result follows. 
\end{proof}

\begin{Remark}
Although Theorem \ref{t-main-bound} requires that $p>2$, as a quick heuristic and sanity check we note that if $p<2$ then the right-hand side of its bound explodes as $N=h^{-1}\to\infty$, (as was indicated by our previous calculations in Section \ref{s-measure}). Moreover, if $p\in [2,4)$ then the convergence rate is worse than $O(\sqrt{h})$, as predicted. 
\end{Remark}


\section{Application to coupling SDE solutions}

We immediately put to work the inhomogeneous $p$-variation estimate of Theorem \ref{t-main-bound} in order to derive Wasserstein convergence rates between the solutions of the previous RDEs (with drift) driven by $\WW^h$ and $\XX^h$ (or, equivalently, their rough path lifts). Recall that the solutions coincide with the approximation scheme $\{x^h_{j}\}_{j=1}^N$ and the output of the log-ODE method. By coupling the rough paths $\WW^h, \XX^h$, (or more precisely their lifts), the following result is an almost immediate corollary of the Lipschitz-continuity of the It\^{o} map. In turn the following corollary immediately implies the main result of the paper, Theorem \ref{t-big}. 

\begin{Corollary}\label{c-main-2}
Let $x$ denote the solution to the SDE (\ref{e-stratonovich-sde}) where we assume that $V_0\in \textup{Lip}^1(\RR^q)$ and $V=\seq{V_k}_{k=1}^d \in \textup{Lip}^\gamma(\RR^q)$ for $\gamma > p > 2$. Let $y$ be the solution to the following RDE driven by $\XX^h$:
\begin{equation}\label{e-rde}
d{y}_t = V\bra{{y}_t} d\XX^h_t+V_0\bra{{y}_t}dt, \gap {y}_0 = x_0 \in \RR^q.
\end{equation}
Then there exists a constant $C=C(p,\gamma, \norm{V}_{\textup{Lip}^\gamma})$ such that for all $\varepsilon>0$, 
\[
\abs{\max_{j=1,\ldots,N} \norm{x(jh)-{y}\bra{jh}}_{\RR^q}}_{L^2} \leq C h^{1-2/p-\varepsilon}.
\]
\end{Corollary}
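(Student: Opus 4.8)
The plan is to interpose the log-ODE approximation points $\{x^{(j)}\}_{j=1}^N$ between the true solution $x$ and $y$, so that the genuinely rough part of the estimate is carried entirely by the It\^{o} map applied to the lifted piecewise abelian rough paths. By the triangle inequality,
\[
\max_{j=1,\ldots,N}\norm{x(jh)-y\bra{jh}}_{\RR^q}\;\leq\;\max_{j=1,\ldots,N}\norm{x(jh)-x^{(j)}}_{\RR^q}+\max_{j=1,\ldots,N}\norm{x^{(j)}-y\bra{jh}}_{\RR^q}.
\]
The first term is controlled directly by the log-ODE convergence theorem, Theorem \ref{t-log-ode}, which gives $\abs{\max_j\norm{x(jh)-x^{(j)}}_{\RR^q}}_{L^2}\leq Ch$; since $p>2$ we have $h\leq h^{1-2/p-\varepsilon}$, so this contribution is already dominated by the target rate, and it remains to estimate the second term.

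For the second term I would recast $x^{(j)}$ and $y\bra{jh}$ as grid values of RDE solutions. By Proposition \ref{p-log-ode-coincide}, $x^{(j)}=\tilde x^h(jh)$ with $\tilde x^h=\pi_{(V,V_0)}\bra{x_0,\WW^h}$, which by Remark \ref{r-eq} coincides with $\pi_{(V,V_0)}\bra{x_0,S_\kappa(\WW^h)}$ (legitimate since $\gamma>p\geq\kappa=\floor p$), and likewise $y=\pi_{(V,V_0)}\bra{x_0,\XX^h}=\pi_{(V,V_0)}\bra{x_0,S_\kappa(\XX^h)}$. As both solution paths start from $x_0$, we have $\max_j\norm{x^{(j)}-y\bra{jh}}_{\RR^q}\leq\norm{\tilde x^h-y}_{p\textup{-var;}[0,1]}$. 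I would then apply the RDE Lipschitz estimate, Proposition \ref{p-lipschitz-rde}, to $\XX^1=S_\kappa(\WW^h)$, $\XX^2=S_\kappa(\XX^h)$ with roughness exponent $p$ (where $\kappa=\floor p\geq 3$) and regularity $\gamma>p$; writing $\Omega:=\omega(0,1)\bra{1\vee\omega(0,1)^{\kappa+1}}$, which at $(0,1)$ is a fixed polynomial in $\pnorm{S_\kappa(\WW^h)}{p}$ and $\pnorm{S_\kappa(\XX^h)}{p}$, this yields, for any fixed $\alpha>0$,
\[
\norm{\tilde x^h-y}_{p\textup{-var;}[0,1]}\leq C\,\Omega\cdot\rho_{p\textup{-var;}[0,1]}\bra{S_\kappa(\WW^h),S_\kappa(\XX^h)}\cdot\exp\bra{C\bra{1+N_{\alpha,p}(S_\kappa(\WW^h),[0,1])+N_{\alpha,p}(S_\kappa(\XX^h),[0,1])}}.
\]
(In the residual regime $2<p<3$ one works with $\WW^h,\XX^h$ themselves in place of their $\kappa$-lifts and uses Lemma \ref{l-holder-bound} in place of Theorem \ref{t-main-bound}; the argument is otherwise identical.)

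Taking $L^2$-norms, I would split the right-hand side by the generalised H\"{o}lder inequality into three factors and estimate each. First, $\rho_{p\textup{-var;}[0,1]}\leq\rho_{1/p\textup{-H\"{o}l;}[0,1]}$ on $[0,1]$, so the $L^q$-norm of the middle factor is $\leq Ch^{1-2/p-\varepsilon}$ by Theorem \ref{t-main-bound}. Second, $\Omega$ has all moments finite uniformly in $h$: the Kolmogorov-type estimates of Section \ref{s-measure}, together with the Lipschitz-continuity of the rough path lift, bound every $L^q$-norm of $\pnorm{S_\kappa(\WW^h)}{p}$ and $\pnorm{S_\kappa(\XX^h)}{p}$ uniformly in $h$, these quantities moreover sitting in a fixed inhomogeneous Wiener chaos (cf.\ Corollary 13.14 of \cite{FV}). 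Third, the exponential factor must have all moments bounded uniformly in $h$. Granting these three bounds, Cauchy--Schwarz and the first term of the triangle inequality give $\abs{\max_j\norm{x(jh)-y(jh)}_{\RR^q}}_{L^2}\leq Ch+Ch^{1-2/p-\varepsilon}\leq Ch^{1-2/p-\varepsilon}$, which is the claim.

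The main obstacle is the uniform exponential integrability of the greedy counts, i.e.\ $\sup_h\EE\bra{\exp\bra{\theta\,N_{\alpha,p}(S_\kappa(\WW^h),[0,1])}}<\infty$ and the same for $S_\kappa(\XX^h)$. The crude bound $N_{\alpha,p}\leq C\alpha^{-1}\pnorm{\cdot}{p}^p$ only delivers a Weibull tail of exponent $2/p<1$ for $p>2$, which is not integrable against $e^{\theta x}$; this is precisely the obstruction the Cass--Litterer--Lyons refinement was designed to overcome. I would therefore invoke their tail estimate in a form adapted to the present setting --- $\WW^h$ being the piecewise abelian lift of piecewise-linear Brownian data and $\XX^h$ its Gaussian surrogate, both with all tensor levels living in a fixed Wiener chaos --- to secure a Weibull tail of exponent strictly greater than $1$, hence the required finite exponential moments uniformly in $h$. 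An alternative, if a direct chaos-based estimate proves awkward, is to dominate $N_{\alpha,p}(S_\kappa(\WW^h),[0,1])$ by $N_{\alpha',p}(S_\kappa(\WW),[0,1])$ plus a correction term controlled by the dyadic partition, thereby transferring the known integrability from enhanced Brownian motion to its piecewise abelian approximation.
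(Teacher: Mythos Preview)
Your overall strategy matches the paper's proof exactly: split via the log-ODE points, lift both piecewise abelian rough paths to level $\kappa=\lfloor p\rfloor$, apply the RDE Lipschitz estimate (Proposition~\ref{p-lipschitz-rde}), and conclude by Cauchy--Schwarz using Theorem~\ref{t-main-bound} for the $\rho_{p\textup{-var}}$ factor and uniform moment bounds for the remaining factors.

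The one place you leave genuinely open---the uniform exponential integrability of $N_{\alpha,p}(S_\kappa(\WW^h),[0,1])$ and $N_{\alpha,p}(S_\kappa(\XX^h),[0,1])$---is handled in the paper precisely along the lines of your second alternative, but the mechanism is worth making explicit since your first suggestion (a direct chaos-based CLL estimate at level $p$) would not work: the naive Weibull exponent at level $p$ is $2/p<1$. The paper instead drops to an auxiliary roughness level $r\in(2,3)$, where the Cass--Litterer--Lyons/Friz--Riedel tail bound for enhanced Brownian motion gives $\PP(N_{\alpha,r}(\WW,[0,1])>u)\leq\exp(-c\,u^{2/s})$ with $2/s>1$. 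The transfer to $\WW^h$ is then a one-line domination $\norm{\WW^h}_{r\textup{-var};[s,t]}\leq\norm{\WW}_{r\textup{-var};[s,t]}$ (no ``correction term'' is needed), which via \cite[Lemma~2]{friz2013integrability} gives $N_{\alpha,r}(\WW^h,[0,1])\leq N_{\alpha,r}(\WW,[0,1])$. Lipschitz-continuity of the lift passes this to $S_\kappa(\WW^h)$ (possibly shrinking $\alpha$), and finally an interpolation argument (\cite[Lemma~8.16]{FV}) converts the $N_{\alpha,r}$ bound into one for $N_{\alpha,p}$. The same chain runs verbatim for $\XX^h$. With this filled in, your proof is complete and coincides with the paper's.
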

\begin{proof}
Let $\kappa\geq2$ be the unique integer satisfying $\floor{p}=\kappa$. 
Recalling Remark \ref{r-eq}, we can rewrite the RDE (\ref{e-rde}) as 
\[
d{y}_t = V(y_t)\, dS_\kappa(\XX^h)_t + V_0\bra{{y}_t}dt, \gap {y}_0=x_0\in \RR^q.
\]
A corresponding statement holds for the pair $\WW^h,S_\kappa(\WW^h)$; let ${z}$ be the solution to the equivalent RDEs with drift:
\[
d{z}_t = V({z}_t)\, d\WW^h_t  + V_0(z_t)\,dt = V({z}_t)\, dS_\kappa(\WW^h)_t + V_0({z}_t)\,dt, \gap {z}_0 = x_0.
\]
From Theorem \ref{t-log-ode} and Proposition \ref{p-log-ode-coincide} we already have
\[
\abs{\max_{j=1,\ldots,N}\norm{x(jh)-{z}\bra{jh}}_{\RR^q}}_{L^2} \leq Ch,
\]
and so it suffices to prove $\abs{\norm{y-z}_\infty}_{L^2} \leq Ch^{1-2/p-\varepsilon}$.

To this end, we employ the RDE Lipschitz estimate of Proposition \ref{p-lipschitz-rde} to find that for each $\alpha>0$, 
\begin{align}
\norm{y-z}_{\infty} 
&\leq \rho_{p\textup{-var;}[0,1]}(y,z)\notag\\
&\leq C_1 \rho_{p\textup{-var;}[0,1]}\bra{S_\kappa\bra{\WW^h},S_\kappa\bra{\XX^h}} \bra{1+ \norm{S_\kappa\bra{\WW^h}}_{p\textup{-var;}[0,1]}^{\floor{p}+1} +  \norm{S_\kappa\bra{\XX^h}}_{p\textup{-var;}[0,1]}^{\floor{p}+1}}\notag\\
&\gap\gap\gap\gap\gap\gap \cdot\exp\bra{C_1\seq{1+N_{\alpha,p}(S_\kappa\bra{\WW^h},[0,1])+N_{\alpha,p}(S_\kappa\bra{\XX^h},[0,1])}}\label{e-big-i}
\end{align}
for some (deterministic) constant $C_1=C_1(p,\gamma,\norm{V}_{\textup{Lip}^\gamma},\alpha)$. 
By the Lipschitz-continuity of the rough path lift and the fact that $\WW^h$ is the piecewise abelian approximation of enhanced Brownian motion $\WW$,  
\begin{equation}\label{e-p-var}
\norm{S_\kappa(\WW^h)}_{p\textup{-var;}[0,1]} 
\leq \norm{S_\kappa(\WW^h)}_{(2+\delta)\textup{-var;}[0,1]} 
\leq C_2(\kappa,\delta)\norm{\WW^h}_{(2+\delta)\textup{-var;}[0,1]} \leq C_2\norm{\WW}_{(2+\delta)\textup{-var;}[0,1]},
\end{equation}
for arbitrary $\delta>0$. 
The last quantity is integrable in $L^q$ for all $q\geq 1$ (\cite[Corollary 13.14]{FV}) with an identical statement holding for $S_\kappa(\XX^h)$. 

Next we establish the integrability of the exponential term in (\ref{e-big-i}) by following the proofs of Theorem 10 and 13 in \cite{bayer2013rough} along with \cite[Corollary 5]{friz2013integrability}. First let $1/r+1/s > 1$, where $r\in (2,3)$. Using the results of \cite[Remark 6.4]{cass2013integrability} and \cite[Corollary 2 and Remark 1]{friz2013integrability}, there exist constants $\alpha=\alpha(r,s)>0$ and $c_1=c_1(r,s)>0$ such that the tail estimate:
\[
\PP\bra{N_{\alpha,r}\bra{\WW,[0,1]}>u}\leq \exp\bra{-c_1\alpha^{2/r}u^{2/s}}
\] 
holds for all $u>0$ and step-sizes $h>0$. Since $\norm{\WW^h}_{r\textup{-var;}[s,t]} \leq \norm{\WW}_{r\textup{-var;}[s,t]}$, Lemma 2 of \cite{friz2013integrability} gives $N_{\alpha,\tilde{p}}(\WW^h,[0,1]) \leq N_{\alpha,r}(\WW,[0,1])$. Hence, 
\[
\PP\bra{N_{\alpha,r}\bra{\WW^h,[0,1]}>u}\leq \PP\bra{N_{\alpha,r}\bra{\WW,[0,1]}>u}.
\]
Then the Lipschitz-continuity of $S_\kappa(\cdot)$ and \cite[Lemma 2]{friz2013integrability} imply
\[
\PP\bra{N_{\alpha,r}\bra{S_\kappa(\WW^h),[0,1]} > u} 
\leq  
\PP\bra{N_{\alpha,r}\bra{\WW^h,[0,1]}>u}
\leq
\exp\bra{-c_1\alpha^{2/r}u^{2/s}} 
\]
also holds for all $u>0$, $h>0$, and for a possibly smaller $\alpha > 0$ (as in the proof of Theorem 13 of \cite{bayer2013rough}). 
The interpolation result of Lemma 8.16 of \cite{FV} tells us that 
\[
\omega_1(s,t):=\norm{S_\kappa(\WW^h)}^p_{p\textup{-var;}[s,t]} \leq \omega_2(s,t)\cdot\sup_{s\leq u < v \leq t} \norm{S_\kappa(\WW^h)_{u,v}}_{C}^{p-r},
\]
where $\omega_2(s,t):=\norm{S_\kappa(\WW^h)}^r_{r\textup{-var;}[s,t]}$. Thus taking $u<v\in [s,t]$ such that $\omega_2(u,v)\leq \alpha$, we then have $\omega_1(u,v) \leq \alpha^{p-r}\omega_2(u,v)$. Again appealing to \cite[Lemma 2]{friz2013integrability} yields
\[
N_{\alpha^{p-r}\alpha,p}\bra{S_\kappa(\WW^h), [s,t]} \leq N_{\alpha, r}\bra{S_\kappa(\WW^h), [s,t]}.
\]
If $\alpha\leq 1$ then $N_{\alpha,p}\bra{S_\kappa(\WW^h),[0,1]} \leq N_{\alpha,r}\bra{S_\kappa(\WW^h), [0,1]}$, and so
\begin{equation}\label{e-tail-estimate-1}
\PP\bra{N_{\alpha,p}\bra{S_\kappa(\WW^h),[0,1]} > u} 
\leq \exp\bra{-c_1\alpha^{2/r}u^{2/s}}.
\end{equation}
On the other hand, if $\alpha>1$ then Lemma 3 of \cite{friz2013integrability} gives
\[
N_{\alpha,p}\bra{S_\kappa(\WW^h),[0,1]} 
\leq 
\alpha^{p-r}\bra{1+2N_{\alpha^{p-r}\alpha,p}\bra{S_\kappa(\WW^h),[0,1]}}
\leq \alpha^{p-r}\bra{1+2N_{\alpha,r}\bra{S_\kappa(\WW^h),[0,1]}},
\]
which in turn gives the tail estimate:
\begin{equation}\label{e-tail-estimate-2}
\PP\bra{N_{\alpha,p}\bra{S_\kappa(\WW^h),[0,1]} > u} \leq c_2\exp\bra{-c_3\alpha^{2/r+2(r-p)/s}u^{2/s}}
\end{equation}
for some constants $c_i=c_i(r,s,\alpha)>0$, $i=1,2$. 
In either case, the tail estimates (\ref{e-tail-estimate-1}) and (\ref{e-tail-estimate-2}), together with the fact that $s\in (1,2)$, guarantee that for all $q\geq 1$, 
\[
\sup_{h>0} \abs{\exp\bra{C_1N_{\alpha,p}\bra{S_\kappa(\WW^h),[0,1]}}}_{L^q} \leq C_3=C_3(\kappa,q,\alpha) <\infty.
\]
By an identical argument, corresponding uniform estimates with $S_\kappa(\XX^h)$ instead of $S_\kappa(\WW^h)$ can be established. Consequently for all $q\geq 1$,
\begin{equation}\label{e-estimate-3}
\sup_{h > 0} \abs{\exp\bra{C_1\seq{1+N_{\alpha,{p}}\bra{S_\kappa(\WW^h),[0,1]}+N_{\alpha,{p}}\bra{S_\kappa(\XX^h),[0,1]}}}}_{L^q} \leq C_3 = C_3(\kappa,q,\alpha) < \infty.
\end{equation}
The proof is concluded by applying the Cauchy-Schwarz inequality to (\ref{e-big-i}) and then using the estimates provided by (\ref{e-p-var}), (\ref{e-estimate-3}) and Theorem \ref{t-main-bound}. 
\end{proof}


\section{Concluding remarks}

We discuss the possibility of two extensions of Theorem \ref{t-big}. 

\subsection{Fractional Brownian motion}

A natural question to ask is whether Theorem \ref{t-big} can be generalised to the case where the driving Brownian motion of the SDE (\ref{e-stratonovich-sde}) is replaced with a fractional Brownian motion with Hurst index $H \in (\frac{1}{3},\frac{1}{2})$, (turning (\ref{e-stratonovich-sde}) into a RDE). Recall that Brownian motion corresponds to $H=\frac{1}{2}$. The log-ODE method remains perfectly valid for this RDE, as does our piecewise abelian interpretation since the L\'{e}vy area of fractional Brownian motion is well-defined for $H>\frac{1}{4}$ (This is not the case for $H\leq \frac{1}{4}$ \cite[\S1]{neuenkirch2010discretizing}; even the  lift of the standard piecewise linear interpolation does not converge in $p$-variation under $L^1$ for $p>H^{-1}\geq 4$ \cite[\S4.5]{lyons2002system}). 
Instead, the main obstacle of this extension is the need to reproduce the main coupling result of Proposition \ref{p-kmt-coupling}. This is problematic because the proof of Davie in \cite{davie2014kmt} inherently relies upon the independence of increments in the Brownian motion case in order to perform the inductive coupling over finer dyadic intervals of $[0,1]$. Similarly, the original Koml\'{o}s-Major-Tusn\'{a}dy Theorem and the modern extensions of Zaitsev also critically rely upon the independence of increments of the random walk to be coupled with a Gaussian approximation. Since this is no longer the case when $H\neq\frac{1}{2}$, the authors see no way around this at present. In the even more extreme case of $H\in (\frac{1}{4}, \frac{1}{3}]$, for the log-ODE method to converge we require the first $\floor{H^{-1}}=3$ levels of the log-signature. The task of coupling these higher order terms is difficult, even in the Brownian case (as we now discuss). 

\subsection{Higher order log-ODE approximations}

This paper has dealt exclusively with the log-ODE method at level $m=2$; that is, the log-signature of the Brownian motion has been truncated to its first two levels. Thus we have only needed to couple the L\'{e}vy area increments with a Gaussian approximation, (conditional on the underlying Brownian increments). A natural extension would be to couple the higher order terms of the log-signature with Gaussian approximations, thus enabling us to use a higher order version of the log-ODE method. If this coupling were successful, then we could expect a better convergence rate in the Wasserstein metric for our resultant approximation scheme. Indeed, a truncation of the log-signature to level $m$ produces a log-ODE scheme with strong convergence in $L^2$ of order $O(h^{m/2})$ \cite[Theorem 4.1]{gyurko2008rough}. 

One difficulty is that one would need to extend the proof of Proposition \ref{p-kmt-coupling} to couple not only the L\'{e}vy area increments, but also the third iterated integrals, conditional on the underlying Brownian increments. This may not be possible without violating the matrix non-degeneracy conditions needed for Davie's coupling proof \cite[Theorem 1]{davie2014kmt}. A more pronounced obstacle is the task of establishing the necessary lifted rough path estimates of Section \ref{s-lifted-estimates}. The difference of the iterated Baker-Campbell-Hausdorff expansions of each piecewise abelian rough path would be significantly more complex because levels 2 and 3 of the group increments would not be equal. The authors see no solution at present.


\stepcounter{section}
\section*{Appendix: Iterated Baker-Campbell-Hausdorff formula}

The Baker-Campbell-Hausdorff formula links the structure of a Lie group with the corresponding structure on its associated Lie algebra. It does this by expressing the logarithm of the product of two Lie group elements as a Lie algebra element using only Lie algebraic operations.

\begin{Theorem}[Baker-Campbell-Hausdorff formula]\label{t-BCHf}
Let $\mathcal{G}$ be a Lie group with group product $\diamond$ and the corresponding Lie algebra $\mathfrak{g}$ defined over any field of characteristic $0$. Let $\exp: \mathfrak{g}\to\mathcal{G}$ be the exponential map. Then for every pair $x,y\in\mathfrak{g}$, 
\[
H(x,y):=\log\bra{\exp(x)\diamond\exp(y)},
\]
can be written as a formal infinite sum of elements of $\mathfrak{g}$. The first terms of order less than or equal to 5 are given by
\begin{align*}
H(x,y) &= x+y+\frac{1}{2}[x,y]+\frac{1}{12}\bra{\l[x,\l[x,y\r]\r]+[y,[y,x]]}
 -\frac{1}{24}[y,[x,[x,y]]]\\
&\gap\gap\gap\gap\gap\gap -\frac{1}{720}\bra{[[[[x,y],y],y],y]+[[[[y,x],x],x],x]}\\
&\gap\gap\gap\gap\gap\gap + \frac{1}{360}\bra{[[[[x,y],y],y],x]+[[[[y,x],x],x],y]}\\
&\gap\gap\gap\gap\gap\gap + \frac{1}{120}\bra{[[[[y,x],y],x],y] + [[[[x,y],x],y],x]} + \ldots
\end{align*}
\end{Theorem}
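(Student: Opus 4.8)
The plan is to establish the formula first as an identity between formal power series in two non-commuting indeterminates, and then transport it to an arbitrary characteristic-zero Lie algebra by universality. Work in the completed free associative algebra $\mathcal{A}=\mathbb{Q}\langle\langle X,Y\rangle\rangle$, graded by total degree, and let $\mathfrak{m}\subset\mathcal{A}$ be the ideal of series with vanishing constant term; the formal exponential $\exp(u)=\sum_{k\ge0}u^{\otimes k}/k!$ and logarithm $\log(1+u)=\sum_{k\ge1}(-1)^{k-1}u^{\otimes k}/k$ are then mutually inverse bijections $\mathfrak{m}\leftrightarrow 1+\mathfrak{m}$. Define $H(X,Y):=\log\!\big(\exp(X)\exp(Y)\big)\in\mathfrak{m}$. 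Two things must be shown: (i) $H(X,Y)$ lies in the completed free Lie algebra $\widehat{\mathfrak{L}}(X,Y)\subset\mathcal{A}$, i.e. is expressible through iterated brackets alone; and (ii) its homogeneous components of degree $\le 5$ are the ones displayed.

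For (i) I would invoke the Hopf-algebra (Friedrichs) criterion: give $\mathcal{A}$ the coproduct $\Delta$ determined by declaring $X$ and $Y$ primitive, so that an element of $\mathfrak{m}$ is a Lie series precisely when it is primitive. Since $\exp$ of a primitive element is group-like and the product of group-like elements is group-like, $\exp(X)\exp(Y)$ is group-like, hence its logarithm is primitive, so $H(X,Y)\in\widehat{\mathfrak{L}}(X,Y)$ with no computation. Alternatively the same conclusion — indeed an explicit Dynkin-type Lie series for $H$ — is classical and may simply be quoted; see e.g. the treatment in \cite{FV}. A third, more analytic route, closer in spirit to the rest of the paper, is to note that $Z(t):=\log(\exp(tX)\exp(Y))$ satisfies $Z(0)=Y$ and
\[
\dot Z(t)=\frac{\operatorname{ad}_{Z(t)}}{1-e^{-\operatorname{ad}_{Z(t)}}}\,(X)=\sum_{n\ge 0}\frac{B_n^{+}}{n!}\big(\operatorname{ad}_{Z(t)}\big)^{n}(X),
\]
a Lie-algebra-valued ODE whose Picard iterates are Lie series, so $H(X,Y)=Z(1)$ is one as well; low-order Picard iteration also produces the explicit terms directly.

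For (ii), expand $\exp(X)\exp(Y)=1+\sum_{i+j\ge1}X^{\otimes i}\otimes Y^{\otimes j}/(i!\,j!)$, insert this into the logarithm series, and collect homogeneous parts through degree $5$; this is a finite, purely mechanical calculation in $\mathcal{A}$. Then rewrite each homogeneous associative polynomial as a bracket expression by applying Dynkin's idempotent $P_n:z\mapsto\frac1n\,[z]$ (right-nested bracketing) on the degree-$n$ part of $\mathfrak{L}(X,Y)$, or equivalently by matching against a Hall/Lyndon basis in each degree; this reads off the coefficients $\tfrac12,\tfrac1{12},-\tfrac1{24},-\tfrac1{720},\tfrac1{360},\tfrac1{120}$. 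The transfer to a general Lie algebra $\mathfrak{g}$ over a characteristic-zero field is then immediate: $X\mapsto x$, $Y\mapsto y$ induces a continuous filtered morphism out of the free object, $\exp$ and $\log$ intertwine with the group exponential, and convergence is a non-issue once one passes to the relevant nilpotent quotients (for the applications in this paper, to the free nilpotent groups $G^{(n)}(\RR^d)$, where all series terminate). The hard part will be only (i) if a fully self-contained proof is wanted — the cleanest argument passes through the Hopf-algebra structure, and verifying that group-like elements are closed under multiplication and that $\log$ sends them to primitives, while routine, is the one place a little care is needed, so it is reasonable to cite it. Everything after that is bookkeeping, the only subtlety being to fix the bracketing convention so the output matches the right-nested ``$*$'' normalisation of \eqref{e-star-notation} used later.
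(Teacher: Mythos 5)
The paper does not prove Theorem~\ref{t-BCHf} --- it simply states the classical Baker--Campbell--Hausdorff formula as a known result (references are cited only for the iterated version that follows it). Your proposal is a correct and standard proof sketch: the Friedrichs/Hopf-algebra primitivity criterion disposes of part (i) essentially without computation, Dynkin's idempotent (or matching against a Hall/Lyndon basis) handles part (ii), and the universality/specialisation argument transports the identity from the completed free Lie algebra on two generators to any characteristic-zero setting --- in particular, to the free nilpotent groups $G^{(n)}(\RR^d)$ actually used in the paper, where all series terminate and convergence is a non-issue. One small slip in the optional ``ODE route'': with $Z(t)=\log(e^{tX}e^{Y})$ the correct equation is $\dot Z=\frac{\operatorname{ad}_{Z}}{\,e^{\operatorname{ad}_{Z}}-1\,}(X)$, i.e.\ the Bernoulli generating function with $B_1=-\frac12$; the form $\frac{\operatorname{ad}_{Z}}{1-e^{-\operatorname{ad}_{Z}}}$ that you wrote would instead arise if one parametrised the other factor, $Z(t)=\log(e^{X}e^{tY})$, and solved for $\dot Z$ in terms of $Y$. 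This does not affect your two primary routes, which are complete; and since the paper itself only cites the result, quoting a standard reference --- as you suggest --- is the appropriate level of rigour here.
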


We can also consider the iterated product:
\[
H(x_1,\ldots,x_n) := \log\bra{e^{x_1} \diamond \ldots \diamond e^{x_n}}.
\]
This is the iterated version of the Baker-Campbell-Hausdorff formula which is used in the proof of the Chen-Strichartz development formula, where the latter gives an explicit expression for the logarithm  of the Brownian signature lift $\log S_N(\WW)_{s,t}$ (\cite{chen1957integration,strichartz1987campbell}). 

\begin{Theorem}[Iterated Baker-Campbell-Hausdorff-Formula]
The iterated Hausdorff coefficient has the form: 
\begin{align*}
H(x_1,\ldots,x_n) 
&= \sum_{k=1}^\infty \frac{(-1)^{k-1}}{k} \sum_{P\in B_k} \frac{1}{\bar{P}P!}x^P,
\end{align*}
where $B_k$, $\bar{P}$, $P!$ and $X^P$ are given by the following expressions:
\[
B_k = \seq{(p_i^j)_{\substack{i\in\seq{1,\ldots,n}\\{j\in\seq{1,\ldots,k}}}},\, p^j_i \in\NN : \forall j\in\seq{1,\ldots,k}, \sum^n_{i=1} p_i^j > 0 },\\
\]
\[
\bar{P} = \sum_{j=1}^k \sum_{i=1}^n p^j_i,\gap\gap P!=\prod_{j=1}^k \prod_{i=1}^n p^j_i!,
\]
\[
x^P = \underbrace{[x_1\ldots [x_1}_{p^1_1 \textup{ times}} \ldots \underbrace{[x_n \ldots [x_n}_{p^1_n \textup{ times}} \ldots \underbrace{[x_1 \ldots [x_1}_{p^k_1 \textup{ times}} \ldots \underbrace{[x_n \ldots [x_n, x_n]}_{p^k_n \textup{ times}} \ldots].
\]
\end{Theorem}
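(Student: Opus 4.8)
The plan is to prove the formula first as an identity of formal power series in the completed free associative algebra $\mathcal{A} = \RR\langle\langle x_1,\ldots,x_n\rangle\rangle$ on non-commuting indeterminates $x_1,\ldots,x_n$, graded by total degree; the general statement for an arbitrary Lie group $\mathcal{G}$ with product $\diamond$ then follows by applying the unique Lie algebra homomorphism from the free Lie algebra sending the $x_i$ to the given elements of $\mathfrak{g}$. On $\mathcal{A}$ the maps $\exp$ and $\log$ are mutually inverse between series with zero constant term and series with constant term $1$, so $H(x_1,\ldots,x_n) = \log\bra{e^{x_1}\cdots e^{x_n}}$ (juxtaposition being the product in $\mathcal{A}$) is a well-defined element with zero constant term. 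The one structural input reserved for the end is that $H$ lies in the completed free Lie algebra $\widehat{\mathfrak{L}}\subset\mathcal{A}$: with the coproduct $\Delta$ determined by $\Delta x_i = x_i\otimes 1 + 1\otimes x_i$, each $e^{x_i}$ is group-like, a product of group-like elements is group-like, the logarithm of a group-like element is primitive, and by the Friedrichs criterion a primitive series is a Lie series.

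First I would expand the exponentials. Writing $e^{x_i} = \sum_{p\geq 0} x_i^p/p!$ and multiplying the $n$ factors gives $e^{x_1}\cdots e^{x_n} = 1 + z$ with
\[
z = \sum_{\substack{(p_1,\ldots,p_n)\in\NN^n \\ p_1+\cdots+p_n>0}} \frac{1}{p_1!\cdots p_n!}\, x_1^{p_1}\cdots x_n^{p_n}.
\]
Since $z$ has lowest degree $1$, the series $\log(1+z) = \sum_{k\geq 1}\frac{(-1)^{k-1}}{k}z^k$ converges in the graded completion. Expanding $z^k$, a term is indexed by an ordered choice of $k$ non-trivial multi-indices $\bra{p_1^j,\ldots,p_n^j}_{j=1}^k$, that is, precisely by an element $P = (p_i^j)\in B_k$ (the condition $\sum_i p_i^j > 0$ being exactly the non-triviality of the $j$-th factor of $z$), and it contributes $\tfrac{1}{P!}$ times the associative word $w^P := x_1^{p_1^1}\cdots x_n^{p_n^1}\cdots x_1^{p_1^k}\cdots x_n^{p_n^k}$ obtained by concatenating the $k$ blocks. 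Hence $H = \sum_{k\geq 1}\frac{(-1)^{k-1}}{k}\sum_{P\in B_k}\frac{1}{P!}\, w^P$ in $\mathcal{A}$, a series that is well-defined degree by degree because only the finitely many $(k,P)$ with $\bar P=\sum_{j,i}p_i^j$ equal to a given value contribute in that degree.

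The final step converts associative words into Lie brackets via the Dynkin--Specht--Wever projection $\delta:\mathcal{A}_{\geq 1}\to\widehat{\mathfrak{L}}$, the degree-preserving linear map determined on homogeneous words by $\delta\bra{y_1 y_2\cdots y_m} = \tfrac{1}{m}\,[y_1,[y_2,[\ldots,[y_{m-1},y_m]\ldots]]]$. The DSW theorem asserts that $\delta$ restricts to the identity on $\widehat{\mathfrak{L}}$; since $H\in\widehat{\mathfrak{L}}$ by the first paragraph, $H = \delta(H)$, and applying $\delta$ termwise — legitimate by continuity together with the finiteness just noted — sends each $w^P$, homogeneous of degree $\bar P$, to $\tfrac{1}{\bar P}\,x^P$, where $x^P$ is exactly the right-normed iterated bracket displayed in the statement. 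This gives $H = \sum_{k\geq 1}\frac{(-1)^{k-1}}{k}\sum_{P\in B_k}\frac{1}{\bar P\,P!}\,x^P$, as claimed; the denominator never vanishes since $\bar P\geq k\geq 1$, and one checks the $k=1$, $\bar P=1$ terms reproduce $x_1+\cdots+x_n$.

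The hard part is really this passage from the associative expansion to the Lie form: one must correctly invoke primitivity/Friedrichs to know $H$ is a Lie series, apply the Dynkin--Specht--Wever theorem in its graded-completed form, and verify that the bracketing convention underlying $\delta$ matches the one defining $x^P$ in the statement (right-normed inner brackets, as written); with the alternative left-normed DSW convention one instead obtains the mirror-image indexing and must re-index $P$ accordingly. A more elementary but heavier alternative is to induct on $n$ using the associativity relation $H(x_1,\ldots,x_n) = H\bra{H(x_1,\ldots,x_{n-1}),x_n}$ together with the two-term formula of Theorem~\ref{t-BCHf}, tracking coefficients degree by degree; this avoids the Hopf-algebraic input but makes the combinatorial bookkeeping of the $B_k$ indices considerably less transparent, so the generating-function route above is preferable.
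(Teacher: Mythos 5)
The paper does not prove this theorem; it only cites references (Baudoin, Castell, Reutenauer) after stating it, so there is no in-paper argument to compare against. Your proposal is a correct, self-contained derivation along the standard lines one finds in those references, in particular Reutenauer's book: expand $e^{x_1}\cdots e^{x_n}-1=z$ and $\log(1+z)$ to obtain $H=\sum_{k\geq 1}\frac{(-1)^{k-1}}{k}\sum_{P\in B_k}\frac{1}{P!}\,w^P$ with associative words $w^P$, establish $H\in\widehat{\mathfrak{L}}$ via group-like $\Rightarrow$ primitive $\Rightarrow$ Lie (Friedrichs), and then apply the normalized right-normed Dynkin--Specht--Wever projection $\delta$, which fixes $H$ and sends each degree-$\bar P$ word $w^P$ to $\tfrac{1}{\bar P}x^P$.

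Two small points worth making explicit, though neither is a gap. First, when you say ``applying $\delta$ termwise is legitimate by continuity together with the finiteness just noted,'' the key supporting fact is that $k\leq \bar P$ for every $P\in B_k$ (each of the $k$ blocks contributes total degree at least $1$), so for a fixed total degree $d$ only the finitely many pairs $(k,P)$ with $k\leq d$ and $\bar P=d$ occur; spelling this out makes the interchange airtight. Second, your remark that the right-normed versus left-normed DSW conventions give different projectors off $\mathfrak{L}$ but both restrict to multiplication by $\bar P$ on $\mathfrak{L}$ is exactly right, and it is good that you flagged that the convention must match the bracketing in $x^P$; the theorem as stated uses the right-normed form, consistent with the $\delta$ you chose. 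Your suggested alternative route via induction on $n$ with the two-variable formula is also valid but, as you say, combinatorially heavier; the Hopf-algebraic route is the one implicit in the paper's citation to Reutenauer.
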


We refer to \cite[Appendix B]{baudoin2004introduction}, \cite[\S3.2]{castell1993asymptotic} and \cite[Theorem 3.11]{reutenauer1993free} for further details.
By induction it can be shown (cf. Example 3.2 of \cite{balogh2010exceptional}) that the first terms of the expansion up to nested Lie brackets of length 4 are given by
\begin{align}
&H(x_1,\ldots,x_n)\label{e-BCHf-d}\\
&= \sum^n_{i=1}  x_i + \frac{1}{2}\sum_{1\leq i < j \leq n} [x_i,x_j]\notag\\
&+ \frac{1}{4}\sum_{1\leq i < j < k \leq n} [[x_i,x_j],x_k]
+ \frac{1}{12} \sum_{i,j} \sum_{k>i\vee j} [x_i,[x_j,x_k]]
+ \frac{1}{12}\sum_{1\leq i<j \leq n} [x_j,[x_j,x_i]]\notag\\
&+\frac{1}{8}\sum_{1\leq i<j<k<l\leq n} [[[x_i,x_j],x_k],x_l] 
+\frac{1}{24}\sum_{i,j} \sum_{l>k>i\vee j} [[x_i,[x_j,x_k]],x_l]
+ \frac{1}{24}\sum_{1\leq i<j<k \leq n} [[x_j,[x_j,x_i]],x_k]\notag\\
&
+ \frac{1}{24}\sum_{1\leq i < j < k \leq n} [x_k,[x_k,[x_i,x_j]]]
- \frac{1}{24}\sum_{i,j} \sum_{k>i\vee j} [x_k,[x_i,[x_j,x_k]]]
+\ldots\notag
\end{align}
We now specialise to the case of $\bra{\Gg,\diamond,\mathfrak{g}} = \bra{G^{(\kappa)}(\RR^d),\otimes,\mathfrak{g}^{(\kappa)}(\RR^d)}$ and present a useful technical lemma for expressing the difference of two iterated BCHF expansions as the global difference at the Lie algebra, rather than the local difference at the Lie group level. 

\begin{Lemma}\label{l-useful}
Fix sequences $\{x_j\}_{j=1}^n, \{y_j\}_{j=1}^n \in G^{(\kappa)}(\RR^d)$ and set
\[
g_i = \pi_i\bra{H(x_1,\ldots,x_n)}, \gap h_i =\pi_i\bra{H(y_1,\ldots,y_n)} \in \mathfrak{g}^{(\kappa)}(\RR^d).
\]
Then for every integer $m\leq \kappa$, 
\begin{align*}
&\norm{\pi_m\bra{e^{x_1}\otimes \ldots\otimes e^{x_n} - e^{y_1}\otimes \ldots\otimes e^{y_n}}}_{(\RR^d)^{\otimes m}}\\
&\leq \sum^m_{k=1} \frac{1}{k!}\sum_{\substack{i_1,\ldots,i_k > 0\\i_1+\ldots+i_k=m}} \norm{ \sum^k_{j=1} g_{i_1} \otimes \ldots \otimes  g_{i_{j-1}} \otimes (g_{i_{j}} - h_{i_j}) \otimes h_{i_{j+1}} \otimes \ldots \otimes h_{i_k} }_{(\RR^d)^{\otimes m}}.
\end{align*}
\end{Lemma}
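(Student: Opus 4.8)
The plan is to reduce the inequality to an elementary identity in the truncated tensor algebra $T^{(\kappa)}(\RR^d)$, built on the single observation that $e^{x_1}\otimes\cdots\otimes e^{x_n}$ is, by the very definition of the iterated Hausdorff series, the exponential of $H(x_1,\ldots,x_n)$. Concretely, I would write $G:=H(x_1,\ldots,x_n)$ and $\tilde G:=H(y_1,\ldots,y_n)$, which are elements of $\mathfrak{g}^{(\kappa)}(\RR^d)$ with $\pi_0(G)=\pi_0(\tilde G)=0$, $g_i=\pi_i(G)$ and $h_i=\pi_i(\tilde G)$. Since $\exp_\kappa:\mathfrak{g}^{(\kappa)}(\RR^d)\to G^{(\kappa)}(\RR^d)$ is a bijection with inverse $\log_\kappa$, and $H$ is precisely the logarithm of the ordered tensor product (Theorem \ref{t-BCHf} applied iteratively), we have the exact identities $e^{x_1}\otimes\cdots\otimes e^{x_n}=\exp_\kappa(G)$ and $e^{y_1}\otimes\cdots\otimes e^{y_n}=\exp_\kappa(\tilde G)$.

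The first step is to read off $\pi_m$ of each exponential. Expanding $\exp_\kappa(a)=1+\sum_{k\geq 1}a^{\otimes k}/k!$, using multilinearity of $\otimes$, and using that a tensor product $g_{i_1}\otimes\cdots\otimes g_{i_k}$ sits in tensor degree $i_1+\cdots+i_k$, one gets
\[
\pi_m\bra{\exp_\kappa(G)}=\sum_{k=1}^m\frac{1}{k!}\sum_{\substack{i_1,\ldots,i_k>0\\i_1+\cdots+i_k=m}}g_{i_1}\otimes\cdots\otimes g_{i_k},
\]
together with the analogous identity for $\tilde G$ with $h_i$ in place of $g_i$; the sum over $k$ may be truncated at $k=m$ precisely because $\pi_0(G)=\pi_0(\tilde G)=0$, so that a tensor product of $k>m$ factors $g_{i_l}$ with $i_l\geq 1$ has degree exceeding $m$ (hence also exceeding $\kappa$ is irrelevant here since $m\le\kappa$). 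Subtracting the two displays, and then applying inside each fixed multi-index $(i_1,\ldots,i_k)$ the standard telescoping identity
\[
a_1\otimes\cdots\otimes a_k-b_1\otimes\cdots\otimes b_k=\sum_{j=1}^k a_1\otimes\cdots\otimes a_{j-1}\otimes(a_j-b_j)\otimes b_{j+1}\otimes\cdots\otimes b_k
\]
with $a_l=g_{i_l}$, $b_l=h_{i_l}$, produces an exact expression for $\pi_m\bra{e^{x_1}\otimes\cdots\otimes e^{x_n}-e^{y_1}\otimes\cdots\otimes e^{y_n}}$. Taking the level-$m$ tensor norm of both sides and applying the triangle inequality over the outer sums in $k$ and in $(i_1,\ldots,i_k)$ yields exactly the claimed bound.

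There is no genuine obstacle here; the argument is essentially bookkeeping, and the only two points deserving a line of care are (i) that the degree-$\kappa$ truncation of $\exp_\kappa$ is harmless, since every surviving term $g_{i_1}\otimes\cdots\otimes g_{i_k}$ (and the corresponding term with $h$'s) has degree exactly $m\leq\kappa$, so it is unaffected by truncation; and (ii) that $\exp_\kappa$ and $\log_\kappa$ are mutually inverse on $G^{(\kappa)}(\RR^d)$, which is what legitimises writing $e^{x_1}\otimes\cdots\otimes e^{x_n}=\exp_\kappa\bra{H(x_1,\ldots,x_n)}$ in the first place. Neither point requires more than a remark.
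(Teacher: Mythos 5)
Your proof is correct and follows the same route as the paper: both rewrite the ordered product of exponentials as $\exp_\kappa$ of the iterated Hausdorff series, expand the exponential, project onto tensor degree $m$, apply the telescoping identity $\bigotimes a_j-\bigotimes b_j=\sum_j \bigotimes_{i<j}a_i\otimes(a_j-b_j)\otimes\bigotimes_{i>j}b_i$, and finish with the triangle inequality. The only cosmetic difference is that you expand each exponential fully before subtracting, whereas the paper subtracts inside the sum over $k$ and then applies the telescope; the ingredients and the bound are identical.
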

\begin{proof}
Recall the well-known non-commutative identity (cf. \cite[\S4]{bass2002extending}),
\begin{equation}\label{e-well-known-nci}
\bigotimes^n_{j=1} a_j - \bigotimes^n_{j=1} b_j = \sum^n_{j=1} \bigotimes^{j-1}_{i=1} a_i(a_j-b_j)\bigotimes_{i=j+1}^n b_i,
\end{equation}
for any sequences $\{a_j\}, \{b_j\}$, with the convention that $\otimes^0_{i=1} a_i = 1$. It follows that for every positive integer $m$, 
\begin{align*}
&\norm{\pi_m\bra{e^{x_1}\otimes \ldots\otimes e^{x_n} - e^{y_1}\otimes \ldots\otimes e^{y_n}}}_{(\RR^d)^{\otimes m}}\\
&= \norm{\pi_m\bra{e^{H(x_1,\ldots,x_n)}-e^{H(y_1,\ldots,y_n)}}}_{(\RR^d)^{\otimes m}}\\
&= \norm{\pi_m\bra{ \sum^m_{k=0} \frac{1}{k!} \bra{H(x_1,\ldots,x_n)^{\otimes k} - H(y_1,\ldots,y_n)^{\otimes k}}}}_{(\RR^d)^{\otimes m}}\\ 
&\leq \sum^m_{k=1} \frac{1}{k!} \norm{\pi_m\bra{H(x_1,\ldots,x_n)^{\otimes k} - H(y_1,\ldots,y_n)^{\otimes k}}}_{(\RR^d)^{\otimes m}}\\
&=\sum^m_{k=1} \frac{1}{k!}\norm{\sum_{\substack{i_1,\ldots,i_k > 0\\i_1+\ldots+i_k=m}}\bra{g_{i_1}\otimes \ldots \otimes g_{i_k} - h_{i_1}\otimes \ldots \otimes h_{i_k}}}_{(\RR^d)^{\otimes m}}\\
&\leq \sum^m_{k=1} \frac{1}{k!}\sum_{\substack{i_1,\ldots,i_k > 0\\i_1+\ldots+i_k=m}} \norm{ \sum^k_{j=1} g_{i_1} \otimes \ldots \otimes  g_{i_{j-1}} \otimes (g_{i_{j}} - h_{i_j}) \otimes h_{i_{j+1}} \otimes \ldots \otimes h_{i_k} }_{(\RR^d)^{\otimes m}}.
\end{align*}
The proof is complete. 
\end{proof}

We conclude the appendix with a useful technical lemma (see \cite[Lemma 3.1]{balogh2010exceptional}).

\begin{Lemma}\label{l-lie-tech}
Any Lie bracket of elements $x_1,\ldots,x_n\in\mathfrak{g}$ is a linear combination with coefficients of $\pm 1$ of nested commutators of the form 
\[
x_{i_1} * x_{i_2} * \ldots * x_{i_{n-1}} * x_{i_n} := [x_{i_1},[x_{i_2},[\ldots[x_{i_{n-1}},x_{i_n}]]]].
\]
\end{Lemma}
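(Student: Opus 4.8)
\textbf{The plan} is to prove this purely Lie-algebraic fact by a double induction whose only ingredients are the antisymmetry $[a,b]=-[b,a]$ and the Jacobi identity, which I shall use in the form $[[a,b],c]=[a,[b,c]]-[b,[a,c]]$. (This is essentially \cite[Lemma 3.1]{balogh2010exceptional}; I record the argument for completeness.)

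First I would isolate the following auxiliary claim: \emph{if $u = x_{i_1}*x_{i_2}*\cdots*x_{i_p}$ and $v = x_{j_1}*\cdots*x_{j_q}$ are two right-nested brackets in the notation $(\ref{e-star-notation})$, then $[u,v]$ is a linear combination, with coefficients in $\{\pm1\}$, of right-nested brackets in the combined list of letters.} I would prove this by induction on $p$, the length of the first argument. If $p=1$ then $[u,v]=x_{i_1}*x_{j_1}*\cdots*x_{j_q}$ is already of the required form. If $p\geq 2$, write $u=[a,u']$ with $a=x_{i_1}$ and $u'=x_{i_2}*\cdots*x_{i_p}$ right-nested of length $p-1$; Jacobi gives
\[
[u,v]=[a,[u',v]]-[u',[a,v]].
\]
Here $[a,v]=a*v$ is again right-nested, so the inductive hypothesis (applied to the shorter first argument $u'$) expresses $[u',[a,v]]$ as a $\pm1$-combination of right-nested brackets; likewise $[u',v]=\sum_\alpha\varepsilon_\alpha r_\alpha$ with $\varepsilon_\alpha\in\{\pm1\}$ and each $r_\alpha$ right-nested, whence $[a,[u',v]]=\sum_\alpha\varepsilon_\alpha\,(a*r_\alpha)$ is again such a combination. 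Subtracting proves the claim.

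Then I would deduce the lemma by structural induction on the bracket expression. A Lie bracket of $x_1,\dots,x_n$ is either a single letter $x_i$, which is trivially of the stated form, or has the shape $[P,Q]$ with $P,Q$ Lie brackets of fewer symbols. Writing $P=\sum_\alpha\varepsilon_\alpha p_\alpha$ and $Q=\sum_\beta\delta_\beta q_\beta$ in reduced form by the inductive hypothesis and expanding by bilinearity gives $[P,Q]=\sum_{\alpha,\beta}\varepsilon_\alpha\delta_\beta[p_\alpha,q_\beta]$, and the auxiliary claim turns each $[p_\alpha,q_\beta]$ into a $\pm1$-combination of right-nested brackets; the lemma follows. (If one wants the coefficients to be exactly $\pm1$, one simply keeps the resulting list of signed right-nested brackets before gathering like terms.)

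\textbf{The main obstacle} is nothing more than organising the induction: the naive induction ``on the length of the bracket'' fails because in the Jacobi step $[[a,u'],v]=[a,[u',v]]-[u',[a,v]]$ the two terms on the right have the same total length as the left-hand side, so one is forced to induct on the length of the \emph{first} argument at fixed total length --- which is exactly why the auxiliary claim must be stated and proved separately. A secondary point I would flag is that the hypothesis that the $x_i$ be distinct letters is never used, antisymmetry and Jacobi being insensitive to repetitions; this matters because the lemma is invoked for possibly non-multilinear brackets in the proofs of Lemma~\ref{l-unfold-norm} and Lemma~\ref{l-main-moment-estimates}.
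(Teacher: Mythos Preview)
Your proof is correct and follows essentially the same approach as the paper: Jacobi identity plus induction, with the paper citing \cite[Lemma 3.1]{balogh2010exceptional} just as you do. The paper's own proof is only a two-line sketch (it states the induction basis $[[x_1,x_2],[x_3,x_4]]$ as a difference of two right-nested brackets and says ``induction on the length of the commutator''), whereas you have organised the argument more carefully via the auxiliary claim on $[u,v]$ for right-nested $u,v$; your observation that one must induct on the length of the \emph{first} argument rather than on total length is exactly the bookkeeping the paper suppresses.
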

\begin{proof}
The proof relies upon the Jacobi identity and induction on the length of the commutator. The induction basis is the identity $[[x_1,x_2],[x_3,x_4]]=[x_4,[x_3,[x_1,x_2]]]-[x_3,[x_2,[x_1,x_2]]]$. 
\end{proof}


\section*{Acknowledgements}

The authors would like to thank Horatio Boedihardjo, Philippe Charmoy, Lajos Gyurko, Ben Hambly, Sean Ledger, Harald Oberhauser and Danyu Yang for many useful discussions. Special thanks go to Prof.~Davie of Edinburgh for answering many questions about his original proof in \cite{davie2014kmt}, as well as Dr.~Weijun Xu of Warwick for his help in Berlin. 
The research is supported by the European Research Council under the European Union's Seventh Framework Programme (FP7-IDEAS-ERC, ERC grant agreement nr. 291244). The authors are grateful for the support of the Oxford-Man Institute. 


\bibliographystyle{plain}
\bibliography{coupling_library}

\end{document}